\newtheorem{assumption}[theorem]{Assumption}
\crefname{assumption}{Assumption}{Assumptions}
\newcommand{\R}{\mathbb{R}}
\newcommand{\N}{\mathbb{N}}
\renewcommand{\div}{\operatorname{div}}
\newcommand{\LL}{\mathcal{L}}
\newcommand{\TT}{\mathcal{T}}
\newcommand{\II}{\mathcal{I}}
\newcommand{\dual}[2]{\langle #1 , #2 \rangle}
\newcommand{\weakly}{\rightharpoonup}
\renewcommand{\d}{\mathrm{d}}
\newcommand{\argmin}{\operatorname{argmin}}
\newcommand{\diag}{\operatorname{diag}}
\newcommand{\supp}{\operatorname{supp}}
\newcommand{\prox}{\operatorname{prox}}
\newcommand{\mx}{\max\nolimits}
\newcommand{\embed}{\hookrightarrow}
\title{Optimal control of a non-smooth semilinear elliptic equation}
\author{%
    Constantin Christof\thanks{%
        TU Dortmund, Faculty of Mathematics,
        Vogelpothsweg~87,
        44227 Dortmund, Germany,
        (\email{constantin.christof@tu-dortmund.de},
        \email{christian.meyer@math.tu-dortmund.de},
        \email{stephan.walther@tu-dortmund.de})
    }
    \and
    Christian Clason\thanks{%
        University of Duisburg-Essen, Faculty of Mathematics,
        Thea-Leymann-Str.~9,
        45127 Essen, Germany
    (\email{christian.clason@uni-due.de})}
    \and
    Christian Meyer\footnotemark[1]\textsuperscript{ ,}\footnote{Corresponding author}
    \and
    Stephan Walther\footnotemark[1]
}             
\date{November 27, 2017}
\begin{document}

\maketitle

\begin{abstract}
    This paper is concerned with an optimal control problem governed by 
    a non-smooth semilinear elliptic equation. We show that the control-to-state mapping 
    is directionally differentiable and precisely characterize its Bouligand sub\-differential. 
    By means of a suitable regularization, first-order optimality conditions 
    including an adjoint equation are derived and afterwards interpreted in light of the 
    previously obtained characterization. In addition, the directional derivative 
    of the control-to-state mapping is used to establish strong stationarity conditions.
    While the latter conditions are shown to be stronger, we demonstrate by numerical examples that the former conditions are amenable to numerical solution using a semi-smooth Newton method.
\end{abstract}


\section{Introduction}

In this paper, we consider the following non-smooth semilinear elliptic optimal control problem
\begin{equation}\tag{P}\label{eq:p}
    \left.
        \begin{aligned}
            &\min_{u \in L^2(\Omega), y \in H_0^1(\Omega)}  \quad J(y,u)\\
            &\text{s.t.} 
            \quad -\Delta y + \max(0, y) = u \; \text{ in }\Omega,
        \end{aligned}
    \qquad \right\}
\end{equation}
where $\Omega \subset \R^d$, $d\in \N$, is a bounded domain and $J$ is a smooth objective; 
for precise assumptions on the data, we refer to \cref{assu:standing} below.
The semilinear PDE in \eqref{eq:p} models the deflection of a stretched thin membrane partially covered by water (see \cite{Kikuchi1984}); a similar equation arises in free boundary problems for a confined plasma; see, e.g., \cite{Temam:1975,Rappaz:1984,Kikuchi1984}.

The salient feature of \eqref{eq:p} is of course the occurrence of the
non-smooth $\max$-function in the equality constraint in \eqref{eq:p}. This causes the associated 
control-to-state mapping $u\mapsto y$ to be non-smooth as well, and hence standard techniques for obtaining first-order necessary optimality conditions that are based on the adjoint of the G\^ateaux-derivative of 
the control-to-state mapping cannot be applied. 
One remedy to cope with this challenge is to apply generalized differentiability concepts
for the derivation of optimality conditions.
Such concepts and the resulting conditions can be roughly grouped in two classes:
\begin{enumerate}[label=(\roman*)]
    \item (generalized) directional derivatives, leading to ``purely primal'' optimality conditions stating that the directional derivatives of the reduced objective in feasible directions are non-negative;
    \item various notions of subdifferentials, leading to abstract optimality conditions stating that zero is contained in a suitable subdifferential of the reduced objective.
\end{enumerate}
For an introductory treatment of generalized derivatives and their relation in finite and/or infinite dimensions, we refer to, e.g., the textbooks \cite{Clarke:1990,rw04,schirotzek,Penot}; a more general, infinite-dimensional, treatment can be found in \cite{Mordukhovich}.
However, with the exception of the convex setting, concrete (and, in particular, numerically tractable) characterizations of these abstract conditions are only available in a restricted set of situations; see, e.g., \cite{Outrata2005,Henrion:2010,ClasonValkonen15,Mehlitz2016}.
To be more precise, it is frequently unclear if these abstract optimality conditions 
-- whether of type (i) or (ii) -- are equivalent to optimality systems involving dual variables. (We recall that even in the convex setting, deriving such systems from conditions of type (ii) requires regularity conditions of, e.g., Slater type, which do not hold in all cases. Such regularity conditions are even more restrictive for Clarke and limiting subdifferentials.)
For selected optimal control problems governed by variational inequalities (VIs), purely primal optimality conditions of type (i) 
have been transformed into optimality systems known as strong stationarity conditions. 
We refer to \cite{m76, mp84, ojs11, W14, W17} for obstacle-type problems, 
to \cite{hmw10} for static elastoplasticity, to \cite{Wachsmuth16b} for bilevel optimal control of ODEs,  
and to \cite{dlRM} for VIs of the second kind. 
In \cite{ms16}, the equivalence of strong stationarity to purely primal conditions of type (i) 
is shown for optimal control of non-smooth semilinear parabolic equations. 
Concerning the comparison of conditions of type (ii) with optimality systems involving dual variables for optimal control of non-smooth problems,
the literature is comparatively scarce. Frequently, regularization and 
relaxation methods, respectively, are used to derive optimality systems, and there are numerous 
contributions in this field; we only refer to \cite{barbu, tiba, b98, ik00, h08, sw13} and 
the references therein. A limit analysis for vanishing regularization then yields an optimality system
for the original non-smooth problem which is usually of intermediate strength and less rigorous 
compared to strong stationarity. 
A classification of the different optimality systems involving dual variables for the case of optimal control of 
the obstacle problem can be found in \cite{hkopacka2009, HW17}. In \cite{ojs11, HMS14, Wachsmuth16a}, 
optimality conditions for the case of VIs of the first kind are obtained by using limiting normal cones. 
As shown in \cite[Thm.~5.7]{HW17}, this approach does in general not lead to 
optimality conditions which are more rigorous than what is obtained by regularization 
(even if the limiting normal cone in the spirit of Mordukhovich is used).
On the other hand, it is known for the case of finite-dimensional mathematical programs with equilibrium constraints that under suitable assumptions, the optimality conditions 
obtained via regularization are equivalent to zero being in the Clarke subdifferential of the 
reduced objective; see, e.g., \cite[Sec.~2.3.3]{cervinka}.
The latter two results show that a comparison of an optimality system 
involving dual variables with conditions of type (ii) is in general far from evident,
and we are not aware of any contributions in this direction for the case of 
optimal control of non-smooth PDEs.
This is of particular interest, however, since optimality conditions of type (ii) may be satisfied 
by accumulation points of sequences generated by optimization algorithms; 
see \cite{HS16}.

The aim of our paper is to investigate this connection for the particular optimal control problem \eqref{eq:p}.
For this purpose, we turn our attention to the Bouligand subdifferential of the control-to-state mapping, 
defined as the set of limits of Jacobians of smooth points in the spirit of, 
e.g., \cite[Def.~2.12]{okz98} or \cite[Sec.~1.3]{kk02}. Note that
in infinite-dimensional spaces, one has to pay attention to the topology underlying these limit processes 
so that multiple notions of Bouligand subdifferentials arise, see \cref{def:bouli} below.
We will precisely characterize these subdifferentials and use this result to interpret the 
optimality conditions arising in the regularization limit. 
We emphasize that the regularization and the associated limit analysis are not novel and fairly straightforward. 
The main contribution of our work is the characterization of the Bouligand subdifferential as a 
set of linear PDE solution operators; see \cref{th:endgame}.  
This characterization allows a comparison of the optimality conditions obtained by regularization with standard optimality conditions of type (ii) (specifically, involving Bouligand and Clarke subdifferentials), 
which shows that the former are surprisingly strong; cf.~\cref{theorem:summary}. 
On the other hand, it is well-known that one loses information in the regularization limit, 
and the same is observed in case of \eqref{eq:p}. In order to see this, 
we establish another optimality system, which is equivalent to a purely primal optimality condition of type (i). It will turn out that the optimality system derived in this way 
is indeed stronger than the one obtained via regularization since it contains an additional sign condition 
for the adjoint state. It is, however, not clear how to solve these strong stationarity conditions numerically. 
In contrast to this, the optimality system arising in the regularization limit allows a reformulation 
as a non-smooth equation that is amenable to solution by semi-smooth Newton methods. We emphasize that we do not employ the regularization procedure for numerical computations, but 
directly solve the limit system instead. 
Our work includes first steps into this direction, but the numerical results are preliminary 
and give rise to future research.

Let us finally emphasize that our results and the underlying analysis are in no way limited to the 
PDE in \eqref{eq:p}. Instead, the arguments can easily be adapted to more general cases involving 
a piecewise $C^1$-function rather than the $\max$-function and a (smooth) divergence-gradient-operator 
instead of the Laplacian. However, in order to keep the discussion as concise as possible and to be able to 
focus on the main arguments, we restrict the analysis to the specific PDE under consideration.

The outline of the paper is as follows:
This introduction ends with a short subsection on our notation and the standing assumptions.
We then turn to the control-to-state mapping, showing that it is globally Lipschitz and directionally differentiable
and characterizing  points where it is G\^ateaux-differentiable. 
\Cref{sec:bouli} is devoted to the characterization of the Bouligand subdifferentials. 
We first state necessary conditions that elements of the subdifferentials have to fulfill. 
Afterwards, we prove that these are also sufficient, which is by far more involved compared to 
showing their necessity. 
In \cref{sec:fon}, we first shortly address the regularization and the corresponding limit analysis. 
Then we compare the 
optimality conditions arising in the regularization limit with our findings from \cref{sec:bouli}. 
The section ends with the derivation of the strong stationarity conditions.
\Cref{sec:numerics} deals with the numerical solution of the optimality system derived via regularization. 
The paper ends with an appendix containing some technical lemmas whose proofs are difficult to find in the literature.

\subsection{Notation and standing assumptions}

Let us shortly address the notation used throughout the paper. 
In what follows, $\Omega$ always denotes a bounded domain.
By 
$\mathbb{1}_M : \mathbb{R}^d \to \{0,1\}$
we denote the characteristic function of a set $M\subset \R^d$. 
By $\lambda^d$ we denote the $d$-dimensional Lebesgue measure.
Given a (Lebesgue-)measurable function $v : \Omega \to \R$, we abbreviate the set $\{x\in \Omega: v(x) = 0\}$
by $\{v = 0\}$; the sets $\{v > 0\}$ and $\{v<0\}$ are defined analogously. 
Note that in what follows, we always work with the notion of Lebesgue measurability (e.g., when talking about $L^p$-spaces or representatives), although we could equivalently work with Borel measurability here.
As usual, the Sobolev space $H^1_0(\Omega)$ is defined as the closure of $C_c^\infty(\Omega)$ 
with respect to the $H^1$-norm. Moreover, we define the space
\begin{equation*}
    Y  := \{ y \in H^1_0(\Omega) : \Delta y \in L^2(\Omega) \}.
\end{equation*}
Equipped with the scalar product 
\begin{equation*}
    (y,v)_Y := \int_\Omega \big(\Delta y \, \Delta v + \nabla y \cdot \nabla v + y\,v\big)\, \d x,
\end{equation*}
$Y$ becomes a Hilbert space. Here and in the remainder of this paper, 
$\Delta = \div \circ \nabla : H^1_0(\Omega) \to H^{-1}(\Omega)$ denotes the distributional Laplacian. 
Note that $Y$ is compactly embedded in $H^1_0(\Omega)$ since, for any sequence 
$(y_n) \subset Y$ with $y_n \weakly y$ in $Y$, we have 
\begin{equation*}
    \|y_n - y\|_{H^1(\Omega)}^2 
    = - \int_\Omega \Delta (y_n-y) \,(y_n - y)\,\d x + \|y_n - y\|_{L^2(\Omega)}^2 \to 0
\end{equation*}
by the compact embedding of $H^1_0(\Omega)$ into $L^2(\Omega)$ (cf.~\cite[Thm.~7.22]{gilbarg2001elliptic}).
Since $Y$ is isometrically isomorphic to the subset 
\begin{equation*}
    \{(y, \omega, \delta) \in L^2(\Omega;\R^{d+2}): \exists\, v\in H^1_0(\Omega) \text{ with }
    y = v, \omega = \nabla v, \delta = \Delta v \text{ a.e. in }\Omega\}
\end{equation*}
of the separable space $L^2(\Omega;\R^{d+2})$, it is separable as well. 
Note that the solution operator $S : u \mapsto y$ associated with the PDE $-\Delta y + \max(0, y) = u$ in \eqref{eq:p} is bijective as a function from $L^2(\Omega)$ to $Y$ (cf.~\cref{prop:basic} below). The space $Y$ is thus the natural choice for the image space of the control-to-state mapping appearing in problem  \eqref{eq:p}. If the boundary $\partial \Omega$ possesses enough regularity (a $C^{1,1}$-boundary would be sufficient here), then $Y$ is isomorphic to $H_0^1(\Omega) \cap H^2(\Omega)$ by the classical regularity theory for the Laplace operator, cf.~\cite[Lem.~9.17]{gilbarg2001elliptic}.

With a little abuse of notation, 
in what follows we will
denote the Nemytskii operator 
induced by the $\max$-function (with different domains and ranges) by the same symbol. In the same way, we will denote by $\max'(y;h)$ the directional derivative of $y\mapsto \max(0,y)$ in the point $y$ in direction $h$, both considered as a scalar function and as the corresponding Nemytskii operator.

Throughout the paper, we will make the following standing assumptions.
\begin{assumption}\label{assu:standing}
    The set $\Omega\subset \R^d$, $d\in \N$, is a bounded domain.
    The objective functional $J: Y \times L^2(\Omega) \to \R$ in \eqref{eq:p}
    is weakly lower semi-continuous and continuously Fr\'echet-differentiable.
\end{assumption}
Note that we do not impose any regularity assumptions on the boundary of $\Omega$.

\section{Directional differentiability of the control-to-state mapping}
We start the discussion of the optimal control problem \eqref{eq:p} by investigating its PDE constraint, 
showing that it is uniquely solvable and that the associated solution operator is directionally differentiable.

\begin{proposition}\label{prop:basic}
    For all $u\in H^{-1}(\Omega)$, there exists a unique solution $y \in H^1_0(\Omega)$ to 
    \begin{equation}\tag{PDE}\label{eq:pde}
        -\Delta y + \max(0, y) = u.
    \end{equation}
    Moreover, the solution operator $S: u \mapsto y$ associated with \eqref{eq:pde} 
    is well-defined and globally Lipschitz continuous as a function from $L^2(\Omega)$ to $Y$.
\end{proposition}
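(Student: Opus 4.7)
The plan is to combine a standard variational existence argument with a monotonicity-based testing argument for Lipschitz continuity, and then bootstrap regularity from the equation itself.

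For existence and uniqueness with $u \in H^{-1}(\Omega)$, I would use the direct method applied to the energy functional
\begin{equation*}
    F(y) := \tfrac12 \int_\Omega |\nabla y|^2 \, \d x + \tfrac12 \int_\Omega (\max(0,y))^2 \, \d x - \dual{u}{y}_{H^{-1},H^1_0}
\end{equation*}
on $H^1_0(\Omega)$. This $F$ is continuous, coercive (by Poincaré), and strictly convex (the Dirichlet term is strictly convex, and $y \mapsto \tfrac12(\max(0,y))^2$ is convex as an antiderivative of the non-decreasing function $\max(0,\cdot)$). Hence $F$ admits a unique minimizer $y$, whose Euler–Lagrange equation is exactly \eqref{eq:pde}. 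Alternatively, one can view $y \mapsto -\Delta y + \max(0,y)$ as the sum of a coercive linear isomorphism $H^1_0 \to H^{-1}$ and a Lipschitz monotone perturbation and apply Browder–Minty; both routes are routine. This defines $S: H^{-1}(\Omega) \to H^1_0(\Omega)$.

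For the Lipschitz estimate, let $y_i = Su_i$ for $i=1,2$. Subtracting the two equations and testing with $y_1-y_2 \in H^1_0(\Omega)$ gives
\begin{equation*}
    \|\nabla(y_1-y_2)\|_{L^2}^2 + \int_\Omega \bigl(\max(0,y_1)-\max(0,y_2)\bigr)(y_1-y_2)\,\d x = \dual{u_1-u_2}{y_1-y_2}.
\end{equation*}
The integral on the left is non-negative by monotonicity of $\max(0,\cdot)$, so together with Poincaré one obtains $\|y_1-y_2\|_{H^1_0} \le C\,\|u_1-u_2\|_{H^{-1}}$, hence (via $L^2 \hookrightarrow H^{-1}$) a fortiori $\|y_1-y_2\|_{H^1_0} \le C\,\|u_1-u_2\|_{L^2}$.

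To upgrade to $Y$, I would use the equation itself: for $u \in L^2(\Omega)$, $\Delta y = \max(0,y) - u$ in the distributional sense, and since $|\max(0,y)| \le |y|$ and $y \in L^2(\Omega)$, the right-hand side lies in $L^2(\Omega)$, so $y \in Y$ and $S$ maps $L^2(\Omega)$ into $Y$. For the Lipschitz estimate in $Y$, write
\begin{equation*}
    \Delta(y_1-y_2) = \bigl(\max(0,y_1)-\max(0,y_2)\bigr) - (u_1-u_2),
\end{equation*}
apply the triangle inequality together with the $1$-Lipschitz property of $\max(0,\cdot)$ pointwise, and combine with the $H^1_0$-Lipschitz estimate already established to bound $\|\Delta(y_1-y_2)\|_{L^2}$ by $\|u_1-u_2\|_{L^2}$.

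I do not foresee a real obstacle here; the only mild subtlety is being careful that no boundary regularity of $\partial\Omega$ is used (in line with \cref{assu:standing}), so one must obtain the $Y$-regularity purely from reading $\Delta y$ off the equation rather than from elliptic regularity theory, and keep the $Y$-Lipschitz bound based solely on the pointwise Lipschitz property of $\max(0,\cdot)$ and the $H^1_0$-estimate.
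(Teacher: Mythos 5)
Your proposal is correct and follows essentially the same route as the paper: monotonicity (the paper invokes Browder--Minty, which you also mention; your convex-energy direct method is an equivalent standard variant), the $H^1_0$-Lipschitz bound by testing the difference of the equations and using monotonicity of $\max(0,\cdot)$, and the upgrade to $Y$ by reading $\Delta y = \max(0,y)-u \in L^2(\Omega)$ off the equation together with the pointwise $1$-Lipschitz property of $\max(0,\cdot)$, with no boundary regularity used. Nothing is missing.
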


\begin{proof}
    The arguments are standard. First of all, Browder and Minty's theorem on monotone 
    operators yields the existence of a unique solution in $H^1_0(\Omega)$. 
    If $u\in L^2(\Omega)$, then a simple bootstrapping argument implies $y \in Y$. 
    To prove the Lipschitz continuity of the solution mapping $S$, we consider two arbitrary but fixed $u_1, u_2 \in L^2(\Omega)$ with associated solutions $y_1 := S(u_1)$ and $y_2 := S(u_2)$. Using again the monotonicity, we obtain straightforwardly that $\|y_1 - y_2\|_{H^1} \leq C \|u_1 - u_2\|_{L^2}$ holds with some absolute constant $C>0$. From \eqref{eq:pde} and the global Lipschitz continuity of the $\max$-function, we now infer
    \begin{equation*}
        \|\Delta (y_1 - y_2)\|_{L^2} = \| \max(0, y_1) - \max(0, y_2) - u_1 + u_2\|_{L^2} \leq (C + 1) \|u_1 - u_2\|_{L^2}.
    \end{equation*}
    The above shows that $S$ is even globally Lipschitz as a function from $L^2(\Omega)$ to $Y$ and completes the proof. 
\end{proof}

\begin{theorem}[directional derivative of $S$]\label{thm:rabl}
    Let $u, h \in L^2(\Omega)$ be arbitrary but fixed, set $y:=S(u)\in Y$, 
    and let $\delta_h \in Y$ be the unique solution to 
    \begin{equation}\label{eq:ddpde}
        - \Delta \delta_h +  \mathbb{1}_{\{y = 0\}}\max(0, \delta_h) + \mathbb{1}_{\{y >0\}} \delta_h = h.
    \end{equation}
    Then it holds
    \begin{alignat*}{3} 
        h_n  \weakly h \;\text{ in }L^2(\Omega),\,t_n \to 0^+ & \quad &
        \Longrightarrow & \quad & \frac{S(u + t_n h_n ) - S(u)}{t_n} \weakly \delta_h \;\text{ in }Y\\
        \intertext{and} 
        h_n  \to h \;\text{ in }L^2(\Omega),\, t_n \to 0^+ & \quad & 
        \Longrightarrow & \quad & \frac{S(u + t_n h_n ) - S(u)}{t_n} \to \delta_h \;\text{ in }Y.
    \end{alignat*}
    In particular, the solution operator $S : L^2(\Omega) \to Y$ associated with 
    \eqref{eq:pde} is Hadamard directionally differentiable with $S'(u; h) = \delta_h \in Y$ in all points $u \in L^2(\Omega)$ 
    in all directions $h \in L^2(\Omega)$.
\end{theorem}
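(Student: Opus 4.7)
The plan is to introduce the difference quotient $z_n := (S(u + t_n h_n) - S(u))/t_n$ and show that it converges to $\delta_h$, weakly in $Y$ in the first scenario and strongly in $Y$ in the second; Hadamard directional differentiability is then precisely the strong-convergence implication.

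First I would verify that $\delta_h$ is unambiguously defined. The map $v \mapsto \mathbb{1}_{\{y=0\}} \max(0,v) + \mathbb{1}_{\{y>0\}}\, v$ is monotone and $1$-Lipschitz from $L^2(\Omega)$ to $L^2(\Omega)$, so together with $-\Delta$ the left-hand side of \eqref{eq:ddpde} is monotone, hemicontinuous and coercive on $H^1_0(\Omega)$; Browder--Minty yields a unique weak solution, and a bootstrap step (as in \cref{prop:basic}) places it in $Y$. Next, the global Lipschitz bound from \cref{prop:basic} implies $\|z_n\|_Y \le C \|h_n\|_{L^2}$, and in both scenarios $\|h_n\|_{L^2}$ is bounded, so $z_n$ is bounded in $Y$. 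Extracting a subsequence with $z_n \weakly \tilde z$ in $Y$ and invoking the compact embeddings $Y \embed H^1_0(\Omega) \embed L^2(\Omega)$ delivers strong $L^2$-convergence and, after one more subsequence, pointwise a.e.\ convergence $z_n \to \tilde z$.

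The technical core---and the main obstacle---is passing to the limit in
\begin{equation*}
    f_n := \frac{\max(0, y + t_n z_n) - \max(0, y)}{t_n}.
\end{equation*}
Splitting $\Omega$ according to the sign of $y$, using $t_n \to 0$ together with pointwise a.e.\ convergence of $z_n$ to $\tilde z$, I would show $f_n(x) \to \tilde z(x)$ on $\{y>0\}$, $f_n(x) \to 0$ on $\{y<0\}$, and $f_n(x) \to \max(0, \tilde z(x))$ on $\{y=0\}$, so $f_n \to \mathbb{1}_{\{y>0\}}\tilde z + \mathbb{1}_{\{y=0\}} \max(0,\tilde z)$ pointwise a.e. The Lipschitz estimate $|f_n| \le |z_n|$ combined with $L^2$-convergence of $z_n$ (which gives equi-integrability of $|z_n|^2$) allows a Vitali-type argument to upgrade pointwise to $L^2$-convergence. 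Dividing the two state equations yields $-\Delta z_n + f_n = h_n$; passing to the weak $L^2$-limit on both sides (using $h_n \weakly h$ in the first case) identifies $\tilde z$ as a solution of \eqref{eq:ddpde}. The uniqueness established above forces $\tilde z = \delta_h$, and the standard subsequence--subsubsequence principle shows that the whole sequence $z_n$ converges weakly in $Y$ to $\delta_h$.

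For the strong-convergence implication I would use $-\Delta z_n = h_n - f_n$: the right-hand side now converges strongly in $L^2(\Omega)$, since $h_n \to h$ by assumption and $f_n \to \max'(y;\delta_h)$ in $L^2$ by the argument just sketched. Hence $\Delta z_n \to \Delta \delta_h$ in $L^2(\Omega)$, and together with the strong $H^1_0$-convergence of $z_n$ (from the compact embedding) this upgrades $z_n \weakly \delta_h$ in $Y$ to $z_n \to \delta_h$ in $Y$. The final sentence of the theorem, Hadamard directional differentiability with $S'(u;h) = \delta_h$, is then exactly the strong-convergence statement applied to arbitrary sequences $h_n \to h$ and $t_n \to 0^+$.
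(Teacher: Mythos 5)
Your argument is correct, but it follows a genuinely different route from the paper. The paper's proof is direct: it subtracts the equation for $\delta_h$ from the one for $y_n=S(u+t_nh_n)$, tests the resulting identity with $(y_n-y)/t_n-\delta_h$, and uses the monotonicity of the $\max$-operator to cancel the term $\bigl(\max(0,y+t_n\delta_h)-\max(0,y_n)\bigr)/t_n$; the remaining terms are handled by importing the Hadamard directional differentiability of the Nemytskii operator $\max\colon L^2(\Omega)\to L^2(\Omega)$ (obtained from dominated convergence) and the compact embedding $L^2(\Omega)\embed H^{-1}(\Omega)$, which absorbs the merely weakly convergent $h_n$. This yields a quantitative $H^1_0$-estimate for the error and then weak (resp.\ strong) $Y$-convergence exactly as in your last step. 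You instead run a compactness and limit-identification argument: the a priori $Y$-bound from the global Lipschitz continuity of $S$, subsequence extraction with the embeddings $Y\embed H^1_0(\Omega)\embed L^2(\Omega)$, a pointwise a.e.\ case analysis of the difference quotient of $\max$ on $\{y>0\}$, $\{y<0\}$, $\{y=0\}$ upgraded to $L^2$-convergence by Vitali (using $|f_n|\leq|z_n|$), identification of the weak limit via the uniqueness of solutions to \eqref{eq:ddpde}, and the subsequence--subsubsequence principle. Your route is more self-contained --- it rederives the directional differentiability of the $\max$-Nemytskii operator on the fly rather than invoking it --- and is a flexible template for such limit passages, at the price of being non-quantitative and requiring the subsequence bookkeeping; the paper's route avoids subsequences altogether and produces an explicit error bound in terms of $\|h_n-h\|_{H^{-1}}$ and the remainder of the $\max$-differentiability. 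Both correctly reduce the strong $Y$-convergence to the strong $L^2$-convergence of $h_n-f_n$ together with the already established $H^1_0$-convergence, and both obtain Hadamard directional differentiability as the strong-convergence statement.
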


\begin{proof}
    First observe that for every $h\in L^2(\Omega)$, \eqref{eq:ddpde} admits a unique solution 
    $\delta_h \in Y$ by exactly the same arguments as in the proof of \cref{prop:basic}. 
    Note moreover that \eqref{eq:ddpde} is equivalent to
    \begin{equation*}
        - \Delta \delta_h +  \mx'(y;\delta_h) = h.
    \end{equation*}
    Now let $u, h \in L^2(\Omega)$ be arbitrary but fixed 
    and let $(t_n) \subset (0, \infty)$ and $(h_n) \subset L^2(\Omega)$ be sequences with $t_n \to 0$ and $h_n \weakly h$ in $L^2(\Omega)$. 
    We abbreviate $y_n := S(u + t_n h_n) \in Y$. Subtracting the equations for $y$ and $\delta_h$ from the one for $y_n$ yields
    \begin{equation}\label{eq:diffquotpde}
        \begin{aligned}[b]
            -\Delta \Big( \frac{y_n - y}{t_n} - \delta_h \Big) 
            & =  h_n - h + \frac{\max(0,y + t_n \delta_h) - \max(0,y_n)}{t_n} \\[-1mm]
            & \quad - \Big(\frac{\max(0,y + t_n \delta_h) - \max(0,y)}{t_n} - \mx'(y;\delta_h)\Big).
        \end{aligned}
    \end{equation}  
    Testing this equation with $(y_n - y)/t_n - \delta_h$ and using the monotonicity of the $\max$-operator,
    we obtain that there exists a constant $C>0$ independent of $n$ with
    \begin{equation*}
        \Big\| \frac{y_n - y}{t_n} - \delta_h \Big\|_{H^1(\Omega)} 
        \leq \| h_n - h\|_{H^{-1}(\Omega)} 
        + \Big\|\frac{\max(0,y + t_n \delta_h) - \max(0,y)}{t_n} - \mx'(y;\delta_h)\Big\|_{L^2(\Omega)}.
    \end{equation*}
    Now the compactness of $L^2(\Omega) \embed H^{-1}(\Omega)$ and the 
    directional differentiability of $\max: L^2(\Omega) \to L^2(\Omega)$ (which directly follows 
    from the directional differentiability of $\max: \R \to \R$ and Lebesgue's dominated 
    convergence theorem) give
    \begin{equation}\label{eq:diffconvH1}
        \frac{y_n - y}{t_n} - \delta_h \to 0 \quad \text{in } H^1_0(\Omega).
    \end{equation}
    As $\max: L^2(\Omega) \to L^2(\Omega)$ is also Lipschitz continuous and thus Hadamard-differentiable, 
    \eqref{eq:diffconvH1} implies 
    \begin{equation}\label{eq:maxhadamard}
        \frac{\max(0,y_n) - \max(0,y)}{t_n} - \mx'(y;\delta_h) \to 0 
        \quad \text{in }L^2(\Omega).
    \end{equation}
    Hence, \eqref{eq:diffquotpde} yields that the sequence $(y_n - y)/t_n - \delta_h$ is bounded in $Y$ 
    and thus (possibly after transition to a subsequence) converges weakly in $Y$. 
    Because of \eqref{eq:diffconvH1}, the weak limit is zero and therefore unique so that the whole 
    sequence converges weakly to zero. This implies the first assertion. 
    If now $h_n$ converges strongly to $h$ in $L^2(\Omega)$, then \eqref{eq:diffquotpde},
    \eqref{eq:diffconvH1} and \eqref{eq:maxhadamard}
    yield $\Delta ( (y_n - y)/t_n - \delta_h ) \to 0$ in $L^2(\Omega)$.
    From the definition of the norm $\|\cdot\|_Y$, it now readily follows that  $(y_n - y)/t_n - \delta_h \to 0$ in $Y$.
    This establishes the second claim.
\end{proof}
\Cref{thm:rabl} allows a precise characterization of points where $S$ is G\^ateaux-differentiable. 
This will be of major importance for the study of the Bouligand subdifferentials in the next section.

\begin{corollary}[characterization of G\^ateaux-differentiable points]\label{cor:GateauxCrit}
    Let $u \in L^2(\Omega)$ be arbitrary but fixed. Then the following are  equivalent:
    \begin{enumerate}[label=(\roman*)]
        \item $y = S(u)$ satisfies $\lambda^d(\{y =0\}) = 0$.
            \label{enum:Gateaux:i}
        \item $S : L^2(\Omega) \to Y$ is G\^ateaux-differentiable in $u$, i.e., $h\mapsto S'(u; h) \in  \mathcal{L}(L^2(\Omega), Y)$.\label{enum:Gateaux:ii}
        \item $S'(u; h) = -S'(u; -h)$ holds for all $h \in L^2(\Omega)$.
            \label{enum:Gateaux:iii}
    \end{enumerate}
    If one of the above holds true,
    then the directional derivative $\delta_h = S'(u; h) \in Y$ 
    in a direction $h \in L^2(\Omega)$  is uniquely characterized as the solution to
    \begin{equation}\label{eq:gateauxpde}
        - \Delta \delta_h + \mathbb{1}_{\{y >0\}} \delta_h = h.
    \end{equation}
\end{corollary}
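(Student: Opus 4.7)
The plan is to close the cycle (i) $\Rightarrow$ (ii) $\Rightarrow$ (iii) $\Rightarrow$ (i) and then read off the formula \eqref{eq:gateauxpde} from (i). For (i) $\Rightarrow$ (ii), I note that when $\lambda^d(\{y=0\})=0$ the nonlinear term $\mathbb{1}_{\{y=0\}}\max(0,\delta_h)$ in \eqref{eq:ddpde} vanishes identically in $L^2(\Omega)$, so that \eqref{eq:ddpde} reduces to the linear, coercive equation \eqref{eq:gateauxpde}. A Lax--Milgram argument completely analogous to the one used in \cref{prop:basic} then gives that the solution operator $h\mapsto \delta_h$ lies in $\mathcal{L}(L^2(\Omega),Y)$; combined with \cref{thm:rabl}, this promotes directional into G\^ateaux differentiability. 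The implication (ii) $\Rightarrow$ (iii) is immediate, since linearity of $h\mapsto S'(u;h)$ forces it to be odd.

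The crux of the corollary is (iii) $\Rightarrow$ (i), which I would prove by contradiction; assume $\lambda^d(\{y=0\})>0$. For an arbitrary $h\in L^2(\Omega)$, the strategy is to add the two equations \eqref{eq:ddpde} satisfied by $\delta_h$ and $\delta_{-h}$. Using the hypothesis $\delta_{-h}=-\delta_h$ from (iii), the Laplacian and the $\mathbb{1}_{\{y>0\}}$ contributions cancel, while the identity $\max(0,\delta_h)+\max(0,-\delta_h)=|\delta_h|$ survives, yielding $\mathbb{1}_{\{y=0\}}|\delta_h|=0$ a.e.\ and hence $\delta_h=0$ on $\{y=0\}$ for \emph{every} $h$. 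Specializing to $h:=\mathbb{1}_{\{y=0\}}\in L^2(\Omega)$ collapses \eqref{eq:ddpde}, thanks to $\max(0,\delta_h)=0$ on the coincidence set, into the purely linear equation $-\Delta\delta_h+\mathbb{1}_{\{y>0\}}\delta_h=\mathbb{1}_{\{y=0\}}$. Testing this PDE with $\delta_h$ and exploiting once more that $\delta_h$ vanishes on $\{y=0\}$, the right-hand side contribution drops out and I obtain $\int_\Omega|\nabla\delta_h|^2\,\d x+\int_\Omega\mathbb{1}_{\{y>0\}}\delta_h^2\,\d x=0$, so $\delta_h\equiv 0$ by Poincar\'e. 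But then the PDE itself forces $\mathbb{1}_{\{y=0\}}=0$ in $L^2(\Omega)$, contradicting our assumption.

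Once the equivalence is established, the final characterization is free: under (i) the equation \eqref{eq:ddpde} coincides with \eqref{eq:gateauxpde}, whose unique solvability has already been observed. I expect the only genuinely subtle step to be the contradiction argument in (iii) $\Rightarrow$ (i); the key insight is that playing $h$ against $-h$ converts the algebraic identity $\max(0,\delta)+\max(0,-\delta)=|\delta|$ into the pointwise information $\delta_h=0$ on $\{y=0\}$, after which the special choice of test direction $h=\mathbb{1}_{\{y=0\}}$ turns the remaining obstruction into an elementary integration by parts.
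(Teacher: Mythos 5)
Your argument is correct, and it follows the paper's skeleton -- the cycle (i)$\Rightarrow$(ii)$\Rightarrow$(iii)$\Rightarrow$(i), with (i)$\Rightarrow$(ii) read off from \eqref{eq:ddpde} and the identity $\mathbb{1}_{\{y=0\}}\bigl(\max(0,\delta_h)+\max(0,-\delta_h)\bigr)=\mathbb{1}_{\{y=0\}}|\delta_h|=0$ obtained exactly as in \eqref{eq:randomeq51423514e} -- but you finish (iii)$\Rightarrow$(i) by a genuinely different route. The paper invokes \cref{lem:randomlemma} to get a smooth $\psi>0$ in $\Omega$ vanishing outside, feeds in the tailored direction $h_\psi:=-\Delta\psi+\mathbb{1}_{\{y>0\}}\psi+\mathbb{1}_{\{y=0\}}\max(0,\psi)$ so that $S'(u;h_\psi)=\psi$, and concludes $\lambda^d(\{y=0\})=0$ from $\mathbb{1}_{\{y=0\}}\psi=0$ and $\psi>0$. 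You instead take the direction $h=\mathbb{1}_{\{y=0\}}$, use $\delta_h=0$ a.e.\ on $\{y=0\}$ to collapse \eqref{eq:ddpde} to $-\Delta\delta_h+\mathbb{1}_{\{y>0\}}\delta_h=\mathbb{1}_{\{y=0\}}$, and test with $\delta_h$ to get $\delta_h\equiv 0$ (Poincar\'e on the bounded domain $\Omega$ suffices, no boundary regularity needed), whence $\mathbb{1}_{\{y=0\}}=0$ a.e. Both arguments are sound; yours is self-contained in the sense that it dispenses with the appendix construction of \cref{lem:randomlemma} and, despite being phrased as a contradiction, actually proves (i) directly. What the paper's choice buys is the reuse of a lemma it needs anyway (the same $\psi$-trick reappears in \cref{prop:necessaryweak,prop:necessarystrong}) and a one-line conclusion without the extra energy estimate.
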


\begin{proof}
    In view of \eqref{eq:ddpde}, it is clear that if $\lambda^d(\{y = 0 \}) = 0$, then $S$ 
    is G\^ateaux-differentiable 
    in $u$
    and the G\^ateaux derivative is the solution operator for \eqref{eq:gateauxpde}. 
    Further, \ref{enum:Gateaux:ii} trivially implies \ref{enum:Gateaux:iii}.
    It remains to prove 
    that \ref{enum:Gateaux:iii} implies \ref{enum:Gateaux:i}. 
    To this end, 
    we note that if $S'(u;h) = - S'(u;-h)$  for all $h\in L^2(\Omega)$, then \eqref{eq:ddpde} implies
    \begin{equation}\label{eq:randomeq51423514e}
        \mathbb{1}_{\{y = 0\}}\max(0, -S'(u;h))  +  \mathbb{1}_{\{y = 0\}}\max(0, S'(u;h)) 
        =  \mathbb{1}_{\{y = 0\}}|S'(u;h)| = 0
    \end{equation}
    for all $h \in L^2(\Omega)$.
    Consider now a function $\psi \in C^\infty(\R^d)$ with $\psi > 0$ 
    in $\Omega$ and $\psi \equiv 0$ in $\R^d \setminus \Omega$, whose existence is ensured by \cref{lem:randomlemma}. 
    Since $h\in L^2(\Omega)$ was arbitrary, we are allowed to choose
    \begin{equation*}
        h := -\Delta \psi +   \mathbb{1}_{\{y > 0\}} \psi+ \mathbb{1}_{\{y = 0\}} \max(0, \psi) \in L^2(\Omega),
    \end{equation*}
    such that $S'(u;h) = \psi$ by virtue of \eqref{eq:ddpde}. Consequently, we obtain from  
    \eqref{eq:randomeq51423514e} that $\mathbb{1}_{\{y = 0\}}\psi = 0$.
    Since $\psi > 0$ in $\Omega$, this yields $\lambda^d(\{y = 0\})= 0$ as claimed. 
\end{proof}

\section{Bouligand subdifferentials of the control-to-state mapping}\label{sec:bouli}

This section is devoted to the main result of our work, namely the precise characterization of 
the Bouligand subdifferentials of the PDE solution operator $S$ from \cref{prop:basic}.

\subsection{Definitions and basic properties}

We start with the rigorous definition of the Bouligand subdifferential. In the spirit of \cite[Def.~2.12]{okz98}, 
it is defined as the set of limits of Jacobians of differentiable points. However, in infinite dimensions, 
we have of course to distinguish between different topologies underlying this limit process, as already mentioned in the 
introduction. This gives rise to the following

\begin{definition}[Bouligand subdifferentials of $S$]\label{def:bouli}
    Let $u \in L^2(\Omega)$ be given. Denote the set of smooth points of $S$ by 
    \begin{equation*}
        D := \{ v\in L^2(\Omega) : \text{$S: L^2(\Omega) \to Y$ is G\^ateaux-differentiable in $v$}\}.
    \end{equation*} 
    In what follows, we will frequently call points in $D$ \emph{G\^ateaux points}.
    \begin{itemize}
        \item[(i)] The \emph{weak-weak Bouligand subdifferential} of $S$ in $u$ is defined by
            \begin{align*} 
                \partial_{B}^{ww} S(u) 
                := \{ & G \in \mathcal{L}(L^2(\Omega), Y) : 
                    \text{there exists }  (u_n) \subset D \text{ such that}\\
                    & u_n \weakly u \text{ in } L^2(\Omega)
                \text{ and }  S'(u_n)h \weakly G\,h \text{ in } Y  \text{ for all } h \in L^2(\Omega)\}.
            \end{align*}
        \item[(ii)] The \emph{weak-strong Bouligand subdifferential} of $S$ in $u$ is defined by
            \begin{align*} 
                \partial_{B}^{ws} S(u) 
                := \{ & G \in \mathcal{L}(L^2(\Omega), Y) : 
                    \text{there exists }  (u_n) \subset D \text{ such that}\\
                    & u_n \weakly u \text{ in } L^2(\Omega)
                \text{ and }  S'(u_n)h \to G\,h \text{ in } Y  \text{ for all } h \in L^2(\Omega)\}.
            \end{align*}
        \item[(iii)] The \emph{strong-weak Bouligand subdifferential} of $S$ in $u$ is defined by
            \begin{align*} 
                \partial_{B}^{sw} S(u) 
                := \{ & G \in \mathcal{L}(L^2(\Omega), Y) : 
                    \text{there exists }  (u_n) \subset D \text{ such that}\\
                    & u_n \to u \text{ in } L^2(\Omega)
                \text{ and }  S'(u_n)h \weakly G\,h \text{ in } Y  \text{ for all } h \in L^2(\Omega)\}.
            \end{align*}
        \item[(iv)] The \emph{strong-strong Bouligand subdifferential} of $S$ in $u$ is defined by
            \begin{align*} 
                \partial_{B}^{ss} S(u) 
                := \{ & G \in \mathcal{L}(L^2(\Omega), Y) : 
                    \text{there exists }  (u_n) \subset D \text{ such that}\\
                    & u_n \to u \text{ in } L^2(\Omega)
                \text{ and }  S'(u_n)h \to G\,h \text{ in } Y  \text{ for all } h \in L^2(\Omega)\}.
            \end{align*}
    \end{itemize}
\end{definition}

\begin{remark}
    Based on the generalization of Rademacher's theorem to Hilbert spaces (see \cite[Thm.~1.2]{m76}) and 
    the generalization of Alaoglu's theorem to the weak operator topology, one can show
    that $\partial_B^{ww}S(u)$ and $\partial_B^{sw} S(u)$ are non-empty for every $u\in L^2(\Omega)$; 
    see also \cite{cg96}. In contrast to this, it is not clear a priori 
    if $\partial_B^{ws}S(u)$ and $\partial_B^{ss} S(u)$ are non-empty, too. 
    However, \cref{th:endgame} at the end of this section will imply this as a byproduct.
\end{remark}
From the definitions, we obtain the following useful properties.
\begin{lemma}\label{lem:boulibasic}\ 
    \begin{enumerate}[label=(\roman*)]
        \item\label{it:boulibasic1} For all $u \in L^2(\Omega)$ it holds
            \begin{equation*}
                \partial_{B}^{ss} S(u) \subseteq \partial_{B}^{sw} S(u)\subseteq \partial_{B}^{ww} S(u)
                \quad \text{and} \quad
                \partial_{B}^{ss} S(u) \subseteq \partial_{B}^{ws} S(u) \subseteq \partial_{B}^{ww} S(u).
            \end{equation*}
        \item\label{it:boulibasic2} If $S$ is G\^ateaux-differentiable in $u \in L^2(\Omega)$, then it holds
            $S'(u) \in \partial_{B}^{ss} S(u)$.
        \item\label{it:boulibasic3} For all $u \in L^2(\Omega)$ and all $G \in \partial_{B}^{ww} S(u)$, it holds
            \begin{equation*}
                \|G\|_{\LL(L^2(\Omega), Y)} \leq L,
            \end{equation*}
            where $L>0$ is the Lipschitz constant of $S : L^2(\Omega) \to Y$.
    \end{enumerate}
\end{lemma}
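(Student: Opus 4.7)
The plan is to handle the three items in sequence, since each one is essentially a direct reading of the definitions combined with a standard functional-analytic fact.

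For \ref{it:boulibasic1}, I would simply observe that strong convergence implies weak convergence in both $L^2(\Omega)$ and $Y$. Hence any sequence $(u_n)\subset D$ witnessing $G\in\partial_B^{ss}S(u)$ automatically witnesses $G$ in each of the other three subdifferentials, and a sequence witnessing $G\in\partial_B^{sw}S(u)$ or $G\in\partial_B^{ws}S(u)$ likewise witnesses $G\in\partial_B^{ww}S(u)$. Nothing beyond the definitions enters here.

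For \ref{it:boulibasic2}, if $u\in D$, I would take the constant sequence $u_n\equiv u\in D$. Trivially $u_n\to u$ in $L^2(\Omega)$, and $S'(u_n)h=S'(u)h\to S'(u)h$ in $Y$ for every $h\in L^2(\Omega)$, so $S'(u)\in\partial_B^{ss}S(u)$ straight from the definition.

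For \ref{it:boulibasic3}, the key input is the global Lipschitz estimate from \cref{prop:basic}. For any $v\in D$ and any $h\in L^2(\Omega)$, writing $S'(v)h$ as the $Y$-limit of the difference quotients $(S(v+t h)-S(v))/t$ as $t\to 0^+$ and using that these difference quotients have $Y$-norm at most $L\|h\|_{L^2}$, one obtains the uniform bound $\|S'(v)h\|_Y\le L\|h\|_{L^2}$, i.e.\ $\|S'(v)\|_{\LL(L^2,Y)}\le L$ for every $v\in D$. Given $G\in\partial_B^{ww}S(u)$ with witnessing sequence $(u_n)\subset D$, the weak lower semicontinuity of $\|\cdot\|_Y$ then yields $\|G h\|_Y\le\liminf_{n\to\infty}\|S'(u_n)h\|_Y\le L\|h\|_{L^2}$ for each $h$, and taking the supremum over the unit ball of $L^2(\Omega)$ gives the claim.

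No step looks like a serious obstacle; the only point that requires any care is to make sure the passage from the difference quotient bound to $\|S'(v)\|_{\LL(L^2,Y)}\le L$ really uses $Y$-convergence of the difference quotients and not merely $H_0^1$- or weak convergence. This is guaranteed by \cref{thm:rabl} applied at the G\^ateaux point $v\in D$, which delivers strong convergence in $Y$.
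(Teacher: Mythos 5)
Your proposal is correct and follows essentially the same route as the paper: parts (i) and (ii) directly from the definitions (constant sequence for (ii)), and part (iii) via the uniform bound $\|S'(u_n)\|_{\LL(L^2(\Omega),Y)}\le L$ at G\^ateaux points combined with weak lower semicontinuity of the $Y$-norm. The only difference is that you spell out why the Lipschitz constant bounds the G\^ateaux derivative (via $Y$-convergence of difference quotients from \cref{thm:rabl}), a step the paper treats as immediate from \cref{prop:basic}.
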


\begin{proof}
    Parts \ref{it:boulibasic1} and \ref{it:boulibasic2} immediately follow from the 
    definition of the Bouligand subdifferentials (to see \ref{it:boulibasic2}, 
    just choose $u_n := u$ for all $n$). In order to prove part \ref{it:boulibasic3}, observe that 
    the definition of $\partial_{B}^{ww} S(u)$ implies the existence of 
    a sequence of G\^ateaux points $u_n \in L^2(\Omega)$ such that $u_n \weakly u$ in $L^2(\Omega)$ and 
    $S'(u_n)h \weakly  G h$ in $Y$ for all $h \in L^2(\Omega)$.  For each $n\in\N$, the 
    global Lipschitz continuity of $S$ according to \cref{prop:basic} 
    immediately gives $\|S'(u_n)\|_{\LL(L^2(\Omega), Y)} \leq L$.
    Consequently, the weak lower semi-continuity of the norm implies
    \begin{equation*}
        \|Gh\|_{Y} \leq  \liminf_{n \to \infty} \|S'(u_n)h\|_{Y} \leq L \|h\|_{L^2} 
        \quad \forall \,h \in L^2(\Omega).
    \end{equation*}
    This yields the claim.
\end{proof}
\begin{remark}
    The Bouligand subdifferentials $\partial_B^{ww} S(u)$ and $\partial_B^{sw} S(u)$ do not change 
    if the condition ``$S'(u_n)h \weakly G\,h$ in $Y$ for all $h \in L^2(\Omega)$'' in Definition~\ref{def:bouli}(i) and (iii)
    is replaced with either ``$S'(u_n)h \to G\,h$ in $Z$ for all $h \in L^2(\Omega)$''  or ``$S'(u_n)h \weakly G\,h$ in $Z$ for all $h \in L^2(\Omega)$'', where $Z$ is a 
    normed linear space  
    such that $Y$ is compactly embedded into $Z$, 
    e.g., $Z = H^1(\Omega)$ 
    or $Z = L^2(\Omega)$. This can be seen as follows:
    Suppose that a sequence  $(u_n) \subset D$ is given such that $S'(u_n)h \weakly G\,h$ holds in $Z$ for all $h \in L^2(\Omega)$. 
    Then, by  Lemma~\ref{lem:boulibasic}\ref{it:boulibasic3}, 
    we can find for every $h \in L^2(\Omega)$ a subsequence $(u_{n_k})$ such that $(S'(u_{n_k})h)$ converges weakly in $Y$. From the weak convergence in $Z$, we obtain that the weak limit has to be equal to $G\,h$ independently of the chosen subsequence. Consequently, $S'(u_n)h \weakly G\,h$ in $Y$ for all $h \in L^2(\Omega)$, and we arrive at our original condition. If, conversely, we know that $S'(u_n)h \weakly G\,h$ holds in $Y$ for all $h \in L^2(\Omega)$, then, by the compactness of the embedding $Y \embed Z$, it trivially holds
    $S'(u_n)h \to G\,h$ in $Z$ for all $h \in L^2(\Omega)$. This yields the claim. The case of $S'(u_n)h \to G\,h$ in $Z$ proceeds analogously.
\end{remark}

Next, we show closedness properties of the two strong subdifferentials.
\begin{proposition}[strong-strong-closedness of $\partial_{B}^{ss} S$]\label{prop:stabss}
    Let $u \in L^2(\Omega)$ be arbitrary but fixed. Suppose that
    \begin{enumerate}[label=(\roman*)]
        \item $u_n \in L^2(\Omega)$ and  $G_n \in \partial_{B}^{ss} S(u_n)$ for all $n\in \N$,
        \item $u_n \to u$ in $L^2(\Omega)$,
        \item $G \in \mathcal{L}(L^2(\Omega), Y)$,
        \item $G_n h  \to G h$ in $Y$ for all $h \in L^2(\Omega)$.
    \end{enumerate}
    Then $G$ is an element of $\partial_{B}^{ss} S(u)$.
\end{proposition}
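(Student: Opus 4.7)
The plan is to produce a sequence of Gâteaux points $(w_n) \subset D$ witnessing that $G \in \partial_B^{ss} S(u)$ via a diagonal argument across the defining sequences for each $G_n$. For every $n$, the assumption $G_n \in \partial_B^{ss} S(u_n)$ yields a sequence $(v_{n,k})_{k\in\N}\subset D$ with $v_{n,k} \to u_n$ in $L^2(\Omega)$ and $S'(v_{n,k})h \to G_n h$ in $Y$ for every $h\in L^2(\Omega)$ as $k\to\infty$. The task is then to select indices $k(n)$ so that $w_n := v_{n,k(n)}$ converges strongly to $u$ and $S'(w_n)h\to Gh$ in $Y$ for every $h$.

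Because $L^2(\Omega)$ is separable, I would fix a countable dense subset $\{h_m\}_{m\in\N}\subset L^2(\Omega)$ and choose $k(n)$ large enough that simultaneously
\begin{equation*}
    \|v_{n,k(n)} - u_n\|_{L^2(\Omega)} \leq \tfrac{1}{n}, \qquad
    \|S'(v_{n,k(n)})h_m - G_n h_m\|_Y \leq \tfrac{1}{n} \quad \text{for all } m \leq n.
\end{equation*}
The first inequality together with $u_n\to u$ gives $w_n\to u$ in $L^2(\Omega)$. For the second, fix $m$; for all $n\geq m$,
\begin{equation*}
    \|S'(w_n) h_m - G h_m\|_Y \leq \tfrac{1}{n} + \|G_n h_m - G h_m\|_Y \to 0
\end{equation*}
by hypothesis (iv). Hence $S'(w_n)h_m \to G h_m$ in $Y$ for each element of the dense set.

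To upgrade this convergence to arbitrary $h \in L^2(\Omega)$, I would invoke the uniform bound from \cref{lem:boulibasic}\ref{it:boulibasic3}: each $S'(w_n)$ has operator norm at most $L$ (the Lipschitz constant of $S$), and the same bound passes to $G$ by pointwise convergence of the $G_n h$ in $Y$ (whose norm in turn is bounded by $L\|h\|_{L^2}$ since each $\|G_n\|\leq L$). Then for any $h$ and any $\ep>0$, picking $h_m$ with $\|h-h_m\|_{L^2}\leq \ep$ gives
\begin{equation*}
    \|S'(w_n)h - Gh\|_Y \leq \|S'(w_n)(h-h_m)\|_Y + \|S'(w_n)h_m - Gh_m\|_Y + \|G(h_m-h)\|_Y \leq 2L\ep + o(1),
\end{equation*}
so $\limsup_n \|S'(w_n)h - Gh\|_Y \leq 2L\ep$, and since $\ep$ was arbitrary, the full strong convergence follows. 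Since $w_n \in D$ by construction, this shows $G \in \partial_B^{ss} S(u)$.

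The only genuinely delicate point is the passage from convergence on the countable dense set to convergence on all of $L^2(\Omega)$; this is precisely where one needs both the separability of $L^2(\Omega)$ and the uniform operator bound provided by the global Lipschitz continuity of $S$. The rest is a routine diagonal extraction and the triangle inequality.
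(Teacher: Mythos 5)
Your proposal is correct and follows essentially the same route as the paper's proof: a diagonal selection over the defining sequences using a countable dense subset of $L^2(\Omega)$, combined with the uniform operator-norm bound from \cref{lem:boulibasic}\ref{it:boulibasic3} to pass from the dense set to all directions $h$. The only cosmetic difference is that you first establish convergence on the dense set and then run a three-term triangle-inequality argument, whereas the paper estimates directly against the nearest dense element $h_n^*$; the content is identical.
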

\begin{proof}
    The definition of $\partial_{B}^{ss} S(u_n)$ implies that 
    for all $n \in \mathbb{N}$, one can find a sequence  $(u_{m,n}) \subset L^2(\Omega)$ of G\^ateaux points 
    with associated derivatives $G_{m, n}:= S'(u_{m,n})$ 
    such that $u_{m, n} \to u_n$ in $L^2(\Omega)$ as $m \to \infty$ and
    \begin{equation*}
        G_{m,n}h \to G_n h \text{ in } Y
        \quad \text{for all }h \in L^2(\Omega) \text{ as }m\to \infty. 
    \end{equation*}
    Since $L^2(\Omega)$ is separable, there exists a countable set 
    $\{w_k\}_{k=1}^\infty \subseteq L^2(\Omega)$ that is dense in $L^2(\Omega)$. 
    Because of the convergences derived above, it moreover follows that 
    for all $n \in \mathbb{N}$, there exists an $m_n \in \mathbb{N}$ with
    \begin{equation}\label{eq:separable}
        \| G_{m_n,n}w_k - G_n w_k\|_Y \leq \frac{1}{n}\quad \forall \, k=1,\dots,n 
        \quad \text{ and } \quad \| u_n - u_{m_n, n}\| _{L^2(\Omega)} \leq \frac{1}{n}. 
    \end{equation}
    Consider now a fixed but arbitrary $h \in L^2(\Omega)$, and define
    \begin{equation*}
        h^*_n := \argmin\{\| w_k - h\|_{L^2(\Omega)} : 1 \leq k \leq n\}.
    \end{equation*}
    Then the density property of $\{w_k\}_{k=1}^\infty$ implies $h^*_n \to h$ in $L^2(\Omega)$ 
    as $n \to \infty$, and we may estimate
    \begin{equation*}
        \begin{aligned}
            \|G_{m_n, n}h  -  G h\|_Y
            &\leq \| G_{m_n, n} h_n^*  -  G_n h_n^*\|_Y + 
            \|(G_{m_n, n} - G_n) (h_n^*  -  h)\|_Y + \|G_n h - G h\|_Y\\
            &\leq \frac{1}{n} + \|G_{m_n,n} - G_n\|_{\LL(L^2,Y)}
            \|h_n^* - h\|_Y + \|G_n h - G h\|_Y \to 0 \quad \text{as } n \to \infty,
        \end{aligned}
    \end{equation*}
    where the boundedness of $\|G_{m_n,n} - G_n\|_{\LL(L^2,Y)}$ follows from 
    \cref{lem:boulibasic}\ref{it:boulibasic3}. 
    The above proves that for all $h \in L^2(\Omega)$, we have  
    $G_{m_n,n}h \to  G h$ in $Y$. Since $h\in L^2(\Omega)$
    was arbitrary and the G\^ateaux points $u_{m_n,n}$ 
    satisfy $u_{m_n,n} \to u$ in $L^2(\Omega)$ as $n\to \infty$ by \eqref{eq:separable} and 
    our assumptions, the claim follows from the definition of $\partial_{B}^{ss} S(u)$.
\end{proof}

\begin{proposition}[strong-weak-closedness of $\partial_{B}^{sw} S$]\label{prop:stabsw}
    Let $u \in L^2(\Omega)$ be arbitrary but fixed. Assume that:
    \begin{enumerate}[label=(\roman*)]
        \item $u_n \in L^2(\Omega)$ and  $G_n \in \partial_{B}^{sw} S(u_n)$ for all $n \in \N$,
        \item $u_n \to u$ in $L^2(\Omega)$,
        \item $G \in \mathcal{L}(L^2(\Omega), Y)$,
        \item $G_n h  \weakly G h$ in $Y$ for all $h \in L^2(\Omega)$.
    \end{enumerate}
    Then $G$ is an element of $\partial_{B}^{sw} S(u)$.
\end{proposition}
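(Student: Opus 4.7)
The plan is to mimic the diagonal construction from the proof of \cref{prop:stabss}, but with the ``$1/n$-approximation'' step adapted to capture weak rather than strong convergence in $Y$. Since weak convergence in $Y$ is not captured by a single norm, I would leverage the separability of $Y^*$ (which follows from $Y$ being a separable Hilbert space, as noted in the introduction): fix a countable dense set $\{w_k\}_{k \in \N} \subset L^2(\Omega)$ and a countable dense set $\{\phi_j\}_{j \in \N} \subset Y^*$.

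By the definition of $\partial_B^{sw} S(u_n)$, for each $n$ there exists a sequence $(u_{m,n})_m \subset D$ with $u_{m,n} \to u_n$ in $L^2(\Omega)$ and $S'(u_{m,n}) h \weakly G_n h$ in $Y$ for every $h \in L^2(\Omega)$ as $m \to \infty$. I would then choose $m_n \in \N$ so that
\begin{equation*}
    \|u_{m_n, n} - u_n\|_{L^2(\Omega)} \leq \tfrac{1}{n}
    \quad \text{and} \quad
    \bigl|\dual{\phi_j}{S'(u_{m_n, n}) w_k - G_n w_k}_{Y^*, Y}\bigr| \leq \tfrac{1}{n}
    \quad \text{for all } 1 \leq j, k \leq n.
\end{equation*}
This is possible because, for each fixed $j, k$, the weak convergence $S'(u_{m, n}) w_k \weakly G_n w_k$ in $Y$ as $m \to \infty$ gives $\dual{\phi_j}{S'(u_{m, n}) w_k} \to \dual{\phi_j}{G_n w_k}$, and only finitely many such scalar approximations are imposed for each $n$. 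Setting $v_n := u_{m_n, n}$, the triangle inequality together with $u_n \to u$ in $L^2(\Omega)$ immediately yields $v_n \to u$ in $L^2(\Omega)$.

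It then remains to verify $S'(v_n) h \weakly Gh$ in $Y$ for every $h \in L^2(\Omega)$. For fixed $j, k \in \N$ and every $n \geq \max\{j, k\}$, the construction combined with hypothesis~(iv) gives
\begin{equation*}
    \bigl|\dual{\phi_j}{S'(v_n) w_k - G w_k}_{Y^*, Y}\bigr|
    \leq \tfrac{1}{n} + \bigl|\dual{\phi_j}{G_n w_k - G w_k}_{Y^*, Y}\bigr| \to 0.
\end{equation*}
Together with the density of $\{\phi_j\}$ in $Y^*$ and the uniform bound $\|S'(v_n)\|_{\LL(L^2(\Omega), Y)} \leq L$ from \cref{lem:boulibasic}\ref{it:boulibasic3}, this yields $S'(v_n) w_k \weakly G w_k$ in $Y$ for every $k$. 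A standard $3\varepsilon$-argument -- decomposing $S'(v_n) h - G h = S'(v_n)(h - w_{k}) + (S'(v_n) w_{k} - G w_{k}) + G(w_{k} - h)$, pairing with an arbitrary $\phi \in Y^*$, and invoking the uniform bound $L$ together with the density of $\{w_k\}$ in $L^2(\Omega)$ -- then extends weak convergence to arbitrary $h \in L^2(\Omega)$.

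The main obstacle is the double diagonalisation required by the ``weak--weak'' approximation: the interleaving of the inner sequences $(u_{m,n})_m$ must simultaneously respect a countable dense set of test directions in $L^2(\Omega)$ \emph{and} a countable dense set of dual functionals in $Y^*$. Once this double density has been exploited, the passage from $(w_k, \phi_j)$ to arbitrary $(h, \phi)$ reduces to the routine approximation argument sketched above.
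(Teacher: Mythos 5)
Your argument is correct, but it takes a genuinely different route from the paper. The paper's proof exploits the compact embedding $Y \embed H^1_0(\Omega)$: the weak-$Y$ convergence $S'(u_{m,n})h \weakly G_n h$ of the inner sequences becomes \emph{strong} convergence in $H^1_0(\Omega)$, so the diagonal selection can be carried out exactly as in \cref{prop:stabss}, just with the $H^1_0$-norm in place of the $Y$-norm and with only one countable dense family (in $L^2(\Omega)$); the resulting diagonal sequence satisfies $S'(u_{m_n,n})h \to Gh$ strongly in $H^1_0(\Omega)$ for every $h$, and the uniform bound $\|S'(u_{m_n,n})\|_{\LL(L^2,Y)} \leq L$ from \cref{lem:boulibasic}\ref{it:boulibasic3} together with uniqueness of limits then upgrades this to weak convergence in $Y$. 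You instead diagonalize directly in the weak topology of $Y$, metrizing it on bounded sets through a countable dense family $\{\phi_j\} \subset Y^*$ (available since $Y$ is a separable Hilbert space), imposing finitely many scalar conditions $|\dual{\phi_j}{S'(u_{m_n,n})w_k - G_n w_k}| \leq 1/n$ at each step, and then using the uniform bound $L$ twice, once for the density of $\{\phi_j\}$ in $Y^*$ and once in the $3\varepsilon$-argument for the density of $\{w_k\}$ in $L^2(\Omega)$. Both proofs are complete; what each buys: the paper's version recycles the argument of \cref{prop:stabss} almost verbatim and needs only a single dense set, while yours avoids the compact embedding $Y \embed H^1_0(\Omega)$ altogether and works in any setting where the image space is separable (reflexive) and a uniform operator bound is available, at the modest cost of the doubly-indexed family of test conditions.
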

\begin{proof}
    As in the proof before, for all $n \in \mathbb{N}$ the definition of $\partial_{B}^{sw} S(u_n)$  
    implies the existence of a sequence of G\^ateaux points 
    $u_{m,n} \in L^2(\Omega)$ with associated derivatives $G_{m, n}:= S'(u_{m,n})$ 
    such that $u_{m, n} \to u_n$ in $L^2(\Omega)$ as $m \to \infty$ and
    \begin{equation*}
        G_{m,n}h \weakly G_n h \text{ in } Y
        \quad\text{for all }h \in L^2(\Omega) \text{ as }m\to \infty. 
    \end{equation*}
    Now the compact embedding of $Y$ in $H^1_0(\Omega)$ gives that 
    $G_{m,n}h \to G_n h$ in $H^1_0(\Omega)$ as $m\to \infty$, 
    and we can argue exactly as in the 
    proof of \cref{prop:stabss} to show that there is a diagonal sequence of 
    G\^ateaux points $u_{m_n,n}$ such that $u_{m_n,n} \to u$ in $L^2(\Omega)$ and 
    \begin{equation}\label{eq:H1strong}
        G_{m_n,n} h \to G h \text{ in } H^1_0(\Omega) \quad \text{for every } h\in L^2(\Omega).  
    \end{equation}
    On the other hand, by \cref{lem:boulibasic}\ref{it:boulibasic3}, 
    the operators $G_{m_n,n}$ are uniformly bounded in $\LL(L^2(\Omega);Y)$. 
    Therefore, for an arbitrary but fixed 
    $h\in L^2(\Omega)$, the sequence $\|G_{m_n,n} h\|_Y$ is bounded in $Y$, so that a 
    subsequence converges weakly to some $\eta \in Y$. Because of \eqref{eq:H1strong}, 
    $\eta = G h$ and the uniqueness of the weak limit implies the weak convergence of the whole 
    sequence in $Y$. As $h$ was arbitrary, this implies the assertion.
\end{proof}

\subsection{Precise characterization of the Bouligand subdifferentials}

This section is devoted to an explicit characterization of the different 
subdifferentials in \cref{def:bouli} without the representation 
as (weak) limits of Jacobians of sequences 
of G\^ateaux points. We start with the following lemma, which will be useful 
in the sequel.

\begin{lemma}\label{le:PDElemma}
    Assume that
    \begin{enumerate}[label=(\roman*)]
        \item $j : \R \to \R$ is monotonically increasing and globally Lipschitz continuous,
        \item $(u_n) \subset L^2(\Omega)$ is a sequence with $u_n \rightharpoonup u \in L^2(\Omega)$,
        \item $(\chi_n) \subset L^\infty(\Omega)$ is a sequence satisfying  
            $\chi_n \geq 0$ a.e. in $\Omega$ for all  $n\in\N$ and 
            $\chi_n \weakly^{*} \chi $ in $L^\infty(\Omega)$ for some $\chi\in L^\infty(\Omega)$,
        \item $w_n \in Y$ is the unique solution to 
            \begin{equation}\label{eq:pdewn}
                -\Delta w_n + \chi_n w_n + j(w_n) = u_n,
            \end{equation}
        \item $w \in Y$ is the unique solution to 
            \begin{equation}\label{eq:pdew}
                -\Delta w + \chi  w  + j(w)= u.
            \end{equation}
    \end{enumerate}
    Then it holds that $w_n \rightharpoonup w$ in $Y$, and if we additionally assume that 
    $\chi_n \to  \chi$ pointwise a.e. and $u_n \to u$ strongly in $L^2(\Omega)$, 
    then we even have $w_n \to w$ strongly in $Y$.
\end{lemma}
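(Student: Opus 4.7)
The plan is a three-stage compactness argument: (a) a uniform $Y$-bound on $(w_n)$, (b) identification of every weak accumulation point with $w$ via \eqref{eq:pdew} and uniqueness, and (c) upgrading to strong convergence under the additional hypotheses. For (a), I would test \eqref{eq:pdewn} against $w_n$ and use $\chi_n \geq 0$ together with the consequence $j(s)s \geq j(0)s$ of monotonicity and Poincar\'e's inequality to obtain a uniform $H_0^1$-bound on $(w_n)$. Rewriting the PDE as $-\Delta w_n = u_n - \chi_n w_n - j(w_n)$ and combining the Banach--Steinhaus bound $\sup_n \|\chi_n\|_{L^\infty} < \infty$ with the affine estimate $|j(s)| \leq |j(0)| + L|s|$ then yields a uniform $L^2$-bound on $\Delta w_n$; hence $(w_n)$ is bounded in $Y$.

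For (b), I would extract a weakly convergent subsequence $w_n \weakly \tilde w$ in $Y$. The compact embedding $Y \embed L^2(\Omega)$ delivers $w_n \to \tilde w$ strongly in $L^2(\Omega)$ (and a.e.\ along a further subsequence). The Laplacian is weakly continuous from $Y$ to $L^2(\Omega)$, the hypothesis gives $u_n \weakly u$ in $L^2(\Omega)$, and the global Lipschitz bound gives $j(w_n) \to j(\tilde w)$ in $L^2(\Omega)$. The decisive term is $\chi_n w_n$: for every $v \in L^2(\Omega)$, strong convergence $w_n v \to \tilde w v$ in $L^1(\Omega)$, together with $\chi_n \weakly^{*} \chi$ in $L^\infty(\Omega) = (L^1(\Omega))^{*}$ and the uniform $L^\infty$-bound on $(\chi_n)$, yields $\int_\Omega \chi_n w_n v \, \d x \to \int_\Omega \chi \tilde w v \, \d x$, so $\chi_n w_n \weakly \chi \tilde w$ in $L^2(\Omega)$. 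The limit equation coincides with \eqref{eq:pdew}; uniqueness (via a standard monotonicity test) forces $\tilde w = w$, and a subsequence-of-subsequence argument gives $w_n \weakly w$ in $Y$ for the whole sequence.

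For (c), under $u_n \to u$ in $L^2(\Omega)$ and $\chi_n \to \chi$ pointwise a.e., I would subtract \eqref{eq:pdew} from \eqref{eq:pdewn} to obtain
\[
-\Delta (w_n - w) = (u_n - u) - \chi_n (w_n - w) - (\chi_n - \chi)\, w - \bigl(j(w_n) - j(w)\bigr).
\]
Each term on the right vanishes in $L^2(\Omega)$: the first by hypothesis; the second by the $L^\infty$-bound on $(\chi_n)$ and $w_n \to w$ in $L^2(\Omega)$ (compact embedding $Y \embed L^2(\Omega)$); the third by dominated convergence; and the fourth by the Lipschitz continuity of $j$. Hence $\Delta(w_n - w) \to 0$ in $L^2(\Omega)$, and combined with $w_n \to w$ in $H_0^1(\Omega)$ from the compact embedding $Y \embed H_0^1(\Omega)$, the definition of $\|\cdot\|_Y$ yields $w_n \to w$ in $Y$.

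I expect the only non-routine step to be the limit passage in the product $\chi_n w_n$ in (b), which hinges on pairing weak-$*$ $L^\infty$-convergence with strong $L^1$-convergence; the compact embedding $Y \embed L^2(\Omega)$ is precisely what makes this pairing available, and the remaining arguments follow the standard monotone-operator-plus-uniqueness pattern.
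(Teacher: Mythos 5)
Your proposal is correct and follows essentially the same route as the paper: a uniform $Y$-bound, extraction of a weakly convergent subsequence whose limit is identified with $w$ via the compact embedding of $Y$ (paired with the weak-$*$ convergence of $\chi_n$ for the product term) and uniqueness, and then the same subtraction-and-term-by-term estimate for the strong convergence, concluding from $\Delta(w_n-w)\to 0$ in $L^2(\Omega)$. The only differences are cosmetic: you spell out the $Y$-bound and the limit passage in $\chi_n w_n$ in more detail than the paper, and you use the embedding $Y\embed L^2(\Omega)$ where the paper uses $Y\embed H^1_0(\Omega)$.
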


\begin{proof}
    First note that due to the monotonicity and the global Lipschitz continuity of $j$, the equations
    \eqref{eq:pdewn} and \eqref{eq:pdew}, respectively, admit unique solutions in $Y$
    by the same arguments as in the proof of \cref{prop:basic}.
    Moreover, due to the weak and weak-$\ast$ convergence, the sequences $(u_n)$ and $(\chi_n)$ 
    are bounded in $L^2(\Omega)$ and $L^\infty(\Omega)$, respectively, so that $(w_n)$ is 
    bounded in $Y$. Hence there exists a weakly converging subsequence -- w.l.o.g.\ denoted by the same symbol --
    such that $w_n \weakly \eta$ in $Y$
    and, by the compact embedding $Y\hookrightarrow H_0^1(\Omega)$, $w_n \to \eta$ strongly in $H_0^1(\Omega)$.
    Together with the weak convergence of $u_n$,  
    this allows passing to the limit in \eqref{eq:pdewn} to deduce that $\eta$ satisfies
    \begin{equation*}
        -\Delta \eta + \chi  \eta  + j(\eta)= u.
    \end{equation*}
    As the solution to this equation is unique, we obtain $\eta = w$.
    The uniqueness of the weak limit now gives convergence of the whole sequence, 
    i.e., $w_n \weakly w$ in $Y$.

    To prove the strong convergence under the additional assumptions, note that the difference 
    $w_n-w$ satisfies 
    \begin{equation}\label{eq:pdediff}
        -\Delta (w_n - w) = (u_n - u) + (\chi\,w - \chi_n\, w_n) + (j(w) - j(w_n)). 
    \end{equation}  
    For the first term on the right-hand side of \eqref{eq:pdediff}, we have $u_n \to u$ in $L^2(\Omega)$ by assumption.
    The second term in \eqref{eq:pdediff} is estimated by
    \begin{equation}\label{eq:wchi}
        \|\chi\,w - \chi_n\, w_n\|_{L^2(\Omega)}
        \leq \|\chi_n\|_{L^\infty(\Omega)}\|w - w_n\|_{L^2(\Omega)}
        + \|(\chi - \chi_n)w\|_{L^2(\Omega)}.
    \end{equation}
    The first term in \eqref{eq:wchi} converges to zero due to $w_n \weakly w$ in $Y$ and the compact embedding, while the convergence of the second term follows 
    from the pointwise convergence of $\chi_n$ in combination with Lebesgue's dominated convergence theorem. 
    The global Lipschitz continuity of $j$ and the strong convergence of $w_n \to w$ in $L^2(\Omega)$ 
    finally also give $j(w_n) \to j(w)$ in $L^2(\Omega)$. 
    Therefore, the right-hand side in \eqref{eq:pdediff} converges to zero in $L^2(\Omega)$. As 
    $-\Delta$ induces the norm on $Y$, we thus obtain 
    the desired strong convergence.
\end{proof}

By setting $j(x) = \max(0, x)$ and $\chi_n \equiv \chi \equiv 0$, we obtain 
as a direct consequence of the preceding lemma the following weak continuity of $S$.
\begin{corollary}\label{cor:Vollstetig}
    The solution operator $S : L^2(\Omega) \to Y$ is weakly continuous, i.e., 
    \begin{equation*}
        u_n \weakly u \text{ in }L^2(\Omega) \quad \Longrightarrow \quad  
        S(u_n) \weakly S(u) \text{ in } Y.
    \end{equation*}
\end{corollary}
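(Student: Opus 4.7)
The plan is to invoke \cref{le:PDElemma} directly with the specific choices $j(x) := \max(0,x)$ and $\chi_n \equiv \chi \equiv 0$, and check that all hypotheses of the lemma hold in this setting.

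First I would observe that $j = \max(0, \cdot) : \R \to \R$ is monotonically increasing and globally Lipschitz (with constant $1$), so assumption (i) of \cref{le:PDElemma} is satisfied. The constant sequence $\chi_n \equiv 0$ is non-negative, lies in $L^\infty(\Omega)$, and trivially converges weak-$\ast$ in $L^\infty(\Omega)$ to $\chi \equiv 0$, so assumption (iii) holds. Assumption (ii) is exactly the hypothesis of the corollary. With these choices, the PDEs \eqref{eq:pdewn} and \eqref{eq:pdew} collapse to
\begin{equation*}
    -\Delta w_n + \max(0, w_n) = u_n, \qquad -\Delta w + \max(0, w) = u,
\end{equation*}
so by the uniqueness of solutions given by \cref{prop:basic}, we have $w_n = S(u_n)$ and $w = S(u)$.

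The conclusion of \cref{le:PDElemma} then immediately yields $S(u_n) = w_n \weakly w = S(u)$ in $Y$, which is exactly the claim of the corollary. There is no real obstacle here; the entire statement is a one-line application of the preceding lemma, and the proof amounts to verifying that the three hypotheses (i)--(iii) reduce to trivialities in this special case.
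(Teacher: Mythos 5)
Your proposal is correct and coincides with the paper's own argument: the corollary is obtained precisely by applying \cref{le:PDElemma} with $j(x)=\max(0,x)$ and $\chi_n\equiv\chi\equiv 0$, identifying $w_n = S(u_n)$ and $w = S(u)$ via uniqueness from \cref{prop:basic}. Nothing is missing.
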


We will see in the following that all elements of the subdifferentials in 
\cref{def:bouli} have a similar structure. To be precise, they are solution operators of linear 
elliptic PDEs of a particular form. 
\begin{definition}[linear solution operator $G_\chi$]\label{def:Gchi}
    Given a function $\chi \in L^\infty(\Omega)$ with $\chi \geq 0$, we define 
    the operator $G_\chi \in \LL(L^2(\Omega), Y)$ to be the solution operator of the 
    linear equation 
    \begin{equation}\label{eq:linpdechi}
        -\Delta \eta +\chi\, \eta = h.
    \end{equation}  
\end{definition}

We first address necessary conditions for an operator in $\LL(L^2(\Omega),Y)$ to be an element of 
the Bouligand subdifferentials. Afterwards we will show that these conditions are also sufficient, 
which is more involved compared to their necessity.
\begin{proposition}[necessary condition for $\partial_{B}^{ww} S(u)$]\label{prop:necessaryweak}
    Let $u \in L^2(\Omega)$ be arbitrary but fixed and set $y:=S(u)$. 
    Then for every $G \in  \partial_{B}^{ww} S(u)$ there exists a unique $\chi \in L^\infty(\Omega)$
    satisfying
    \begin{equation}\label{eq:chiweakstar}
        0 \leq \chi \leq 1 \text{ a.e. in }\Omega, \quad \chi = 1 \text{ a.e. in } \{y > 0\},
        \quad \text{and} \quad \chi = 0 \text{ a.e. in } \{y < 0\}
    \end{equation}
    such that $G = G_\chi$.
\end{proposition}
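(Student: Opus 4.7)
The plan is to leverage \cref{cor:GateauxCrit}: at every G\^ateaux point $v \in D$ one has $S'(v) = G_{\chi(v)}$ with $\chi(v) := \mathbb{1}_{\{S(v) > 0\}}$. Given $G \in \partial_B^{ww} S(u)$, by definition there is a sequence $(u_n) \subset D$ with $u_n \weakly u$ in $L^2(\Omega)$ and $S'(u_n)h \weakly Gh$ in $Y$ for every $h \in L^2(\Omega)$. Writing $y_n := S(u_n)$, $y := S(u)$, and $\chi_n := \mathbb{1}_{\{y_n > 0\}}$, the task reduces to showing that the weak operator limit of the $G_{\chi_n}$ has the form $G_\chi$ for some $\chi \in L^\infty(\Omega)$ with the claimed pointwise structure.

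First I would extract subsequences. By \cref{cor:Vollstetig}, $y_n \weakly y$ in $Y$, and the compact embedding $Y \embed L^2(\Omega)$ yields, along a subsequence, $y_n \to y$ a.e.\ in $\Omega$. Since $(\chi_n)$ is bounded by $1$ in $L^\infty(\Omega)$, a further subsequence satisfies $\chi_n \weakly^* \chi$ with $0 \leq \chi \leq 1$ a.e. To identify $\chi$ on the nodal sets, I would decompose $\{y > 0\} = \bigcup_k \{y > 1/k\}$: on each $\{y > 1/k\}$ the a.e.\ convergence $y_n \to y$ forces $\chi_n \to 1$ a.e., and since $|\chi_n| \leq 1$ and $\lambda^d(\Omega) < \infty$, dominated convergence upgrades this to $L^1(\{y > 1/k\})$-convergence. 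Testing the weak-$*$ limit $\chi$ against bounded functions supported in $\{y > 1/k\}$ (which lie in $L^1(\Omega)$) then forces $\chi = 1$ a.e.\ on $\{y > 1/k\}$; taking unions yields $\chi = 1$ a.e.\ on $\{y > 0\}$, and the symmetric argument gives $\chi = 0$ a.e.\ on $\{y < 0\}$.

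Next, I would apply \cref{le:PDElemma} with $j \equiv 0$, the constant sequence $h_n \equiv h$, and the sequence $(\chi_n) \weakly^* \chi$: for every $h \in L^2(\Omega)$ this yields $G_{\chi_n}h \weakly G_\chi h$ in $Y$. Combined with the defining relation $G_{\chi_n}h = S'(u_n)h \weakly Gh$, uniqueness of weak limits gives $Gh = G_\chi h$ for all $h$, hence $G = G_\chi$. For the uniqueness of $\chi$, I would reuse the function $\psi \in C^\infty(\R^d)$ from \cref{lem:randomlemma} (strictly positive on $\Omega$, vanishing outside): if both $\chi_1,\chi_2$ represent $G$, then $h := -\Delta \psi + \chi_1 \psi \in L^2(\Omega)$ yields $G_{\chi_1}h = \psi = G_{\chi_2}h$, so $(\chi_1-\chi_2)\psi = 0$ a.e., and strict positivity of $\psi$ on $\Omega$ forces $\chi_1 = \chi_2$ a.e.

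The main obstacle is the middle step, as weak-$*$ convergence of the $\chi_n$ alone conveys no pointwise information about the limit $\chi$; the structural identification $\chi \equiv 1$ on $\{y > 0\}$ and $\chi \equiv 0$ on $\{y < 0\}$ relies essentially on the indicator form of $\chi_n$ combined with the a.e.\ convergence of $y_n$ to bootstrap weak-$*$ to strong $L^1$-convergence on superlevel sets of $|y|$. Once this is in place, the remainder is a routine application of \cref{le:PDElemma} and a test-function argument for uniqueness.
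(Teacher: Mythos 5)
Your proposal is correct, and its overall skeleton coincides with the paper's: identify $S'(u_n)=G_{\chi_n}$ with $\chi_n=\mathbb{1}_{\{y_n>0\}}$ via \cref{cor:GateauxCrit}, extract an $h$-independent weak-$*$ limit $\chi_n\weakly^*\chi$, pass to the limit in the linear PDE with \cref{le:PDElemma}, and prove uniqueness of $\chi$ with the strictly positive $\psi$ from \cref{lem:randomlemma}. The only genuine difference is the step you yourself flag as the main obstacle, namely showing $\chi=1$ a.e.\ on $\{y>0\}$ and $\chi=0$ a.e.\ on $\{y<0\}$. You pass to a subsequence along which $y_n\to y$ a.e.\ (via the compact embedding of $Y$), argue pointwise that $\chi_n\to 1$ on $\{y>0\}$ and $\chi_n\to 0$ on $\{y<0\}$, and upgrade this by dominated convergence so that it can be matched with the weak-$*$ limit; incidentally, the decomposition into superlevel sets $\{y>1/k\}$ is not needed, since the pointwise argument works directly on $\{y>0\}$. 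The paper instead avoids any pointwise subsequence extraction by observing the single identity
\begin{equation*}
    0=\int_\Omega \chi_n\min(0,y_n)-(1-\chi_n)\max(0,y_n)\,\d x
    \to \int_\Omega \chi\min(0,y)-(1-\chi)\max(0,y)\,\d x ,
\end{equation*}
which uses only $\chi_n\weakly^*\chi$ paired with the strong $L^2$-convergence $y_n\to y$; since the limiting integrand has a sign, it must vanish a.e., which yields the same structure of $\chi$. Your route is slightly more elementary and makes the mechanism (indicators plus a.e.\ convergence) transparent, while the paper's integral trick is shorter and dispenses with the extra subsequence; both are fully rigorous.
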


\begin{proof}
    If $G \in  \partial_{B}^{ww} S(u)$ is arbitrary but fixed, 
    then there exists a sequence of G\^ateaux points $u_n \in L^2(\Omega) $ such that  
    $u_n \rightharpoonup u$ in $L^2(\Omega)$ and $S'(u_n)h \weakly G h$ in $Y$ 
    for all $h \in L^2(\Omega)$. 
    Now, let $h\in L^2(\Omega)$ be arbitrary but fixed and abbreviate  
    $y_n := S(u_n)$, $\delta_{h,n} := S'(u_n)h$, and $\chi_n :=  \mathbb{1}_{\{y_n >0\}}$.
    Then we know from \cref{cor:Vollstetig} that $y_n \rightharpoonup y$ in $Y$ 
    and from \cref{cor:GateauxCrit} that $\delta_{h,n} = G_{\chi_n} h$.
    Moreover, from the Banach--Alaoglu Theorem it follows that -- 
    after transition to a subsequence (which may be done independently of $h$) --
    it holds that $\chi_n \weakly^{*} \chi$ in $L^\infty(\Omega)$.
    From the weak-$\ast$ closedness of the set
    $\{\xi\in L^\infty(\Omega): 0\leq \xi\leq 1 \text{ a.e.\ in }\Omega\}$
    we obtain that $0 \leq \chi \leq 1$ holds a.e.\ in $\Omega$. 
    Further,  the definition of $\chi_n$ and the convergences $\chi_n  \weakly^{*} \chi$ in $L^\infty(\Omega)$ and $y_n \to y$ in $L^2(\Omega)$ yield
    \begin{equation*}
        0 = \int_\Omega \chi_n \min(0, y_n) - (1 - \chi_n)\max(0, y_n) \d x 
        \to \int_\Omega \chi \min(0, y) - (1 - \chi)\max(0, y) \d x = 0.
    \end{equation*}
    Due to the sign of the integrand, the above yields  $\chi \min(0, y) - (1 - \chi)\max(0, y) = 0$ a.e.\ in $\Omega$, and this entails $\chi = 1$ a.e.\ in $\{y>0\}$ and $\chi = 0$  a.e.\ in $\{y<0\}$. 
    This shows that $\chi$ satisfies \eqref{eq:chiweakstar}.
    From Lemma \ref{le:PDElemma}, we may deduce that $\delta_{h,n} \weakly G_\chi h$ in $Y$. 
    We already know, however, that $\delta_{h,n} = S'(u_n)h \rightharpoonup G h$ in $Y$. 
    Consequently, since $h$ was arbitrary, $G = G_\chi$, and the existence claim is proven. 
    It remains to show that $\chi$ is unique. To this end, assume that there are two different 
    functions $\chi, \tilde{\chi} \in L^\infty(\Omega)$ with $G = G_\chi = G_{\tilde\chi}$. 
    If we then consider a function $\psi \in C^\infty(\R^d)$ with $\psi > 0$ in $\Omega$ 
    and $\psi \equiv 0$ in $\R^d \setminus \Omega$ (whose existence is ensured by 
    \cref{lem:randomlemma}) and define $h_{\psi} :=  -\Delta \psi +\chi \psi \in L^2(\Omega)$, 
    then we obtain $\psi = G_\chi h_\psi = G_{\tilde\chi} h_\psi$, which gives rise to 
    \begin{equation*}
        -\Delta \psi+\chi \psi= h_{\psi} =  -\Delta \psi+\tilde{\chi} \psi.
    \end{equation*}
    Subtraction now yields $(\chi - \tilde{\chi})\psi = 0$ a.e. in $\Omega$ and, 
    since $\psi > 0$ in $\Omega$, this yields $\chi \equiv \tilde{\chi}$.
\end{proof}

\begin{proposition}[necessary condition for $\partial_{B}^{ws} S(u)$]\label{prop:necessarystrong}
    Let $u \in L^2(\Omega)$ be arbitrary but fixed with $y = S(u)$. 
    Then for every $G \in  \partial_{B}^{ws} S(u)$  
    there exists a unique function $\chi \in L^\infty(\Omega)$ satisfying
    \begin{equation}\label{eq:chistrong}
        \chi \in \{0, 1\} \text{ a.e. in }\Omega, \quad \chi = 1 \text{ a.e. in } \{y > 0\},
        \quad \text{and} \quad \chi = 0 \text{ a.e. in } \{y < 0\}
    \end{equation}
    such that $G = G_\chi$.
\end{proposition}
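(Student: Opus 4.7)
The plan is to mimic the proof of Proposition \ref{prop:necessaryweak} as far as possible and then exploit the strong convergence in $Y$ to upgrade the resulting $\chi \in [0,1]$ to a characteristic function. Given $G \in \partial_B^{ws} S(u)$, I would take a sequence $(u_n)$ of G\^ateaux points with $u_n \weakly u$ in $L^2(\Omega)$ and $S'(u_n)h \to Gh$ in $Y$ for every $h$, set $y_n := S(u_n)$, $\chi_n := \mathbb{1}_{\{y_n > 0\}}$, and pass to a subsequence with $\chi_n \weakly^{*} \chi$ in $L^\infty(\Omega)$. \Cref{cor:Vollstetig} gives $y_n \weakly y$ in $Y$ and hence $y_n \to y$ in $L^2(\Omega)$, so the integral identity argument of \cref{prop:necessaryweak} applies verbatim to yield $0 \leq \chi \leq 1$ together with $\chi = 1$ on $\{y>0\}$ and $\chi = 0$ on $\{y<0\}$. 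Applying \cref{le:PDElemma} with $j \equiv 0$ then shows $S'(u_n)h = G_{\chi_n}h \weakly G_\chi h$ in $Y$; combining with the assumed strong convergence $S'(u_n)h \to Gh$ gives $G = G_\chi$.

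The novel and main step is to show that $\chi$ in fact takes only the values $0$ and $1$. The idea is that strong convergence in $Y$ encodes strong convergence of $\Delta \eta_n$ in $L^2(\Omega)$, which by the PDE $-\Delta \eta_n + \chi_n \eta_n = h$ turns weak-$*$ convergence of $\chi_n$ into strong convergence of the product $\chi_n \eta_n$. Concretely, I would pick $\psi \in C^\infty(\R^d)$ with $\psi > 0$ on $\Omega$ and $\psi \equiv 0$ outside (via \cref{lem:randomlemma}), set $h := -\Delta \psi + \chi \psi \in L^2(\Omega)$, so that $G_\chi h = \psi$ and $\eta_n := G_{\chi_n} h = S'(u_n)h \to \psi$ in $Y$ by assumption. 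Rearranging the PDE yields $\chi_n \eta_n = h + \Delta \eta_n \to h + \Delta \psi = \chi \psi$ in $L^2(\Omega)$. Passing to a further subsequence, both $\eta_n \to \psi$ and $\chi_n \eta_n \to \chi \psi$ pointwise a.e.\ in $\Omega$. Since $\psi > 0$ throughout $\Omega$, division gives $\chi_n \to \chi$ pointwise a.e., and because each $\chi_n$ is $\{0,1\}$-valued, so is the limit. Uniqueness of $\chi$ then follows from exactly the same construction as in \cref{prop:necessaryweak}, using the test direction $h_\psi := -\Delta \psi + \chi \psi$.

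The hard part is really only the dichotomy step; the delicate point there is that the natural argument needs \emph{pointwise} information about $\chi_n$, and weak-$*$ convergence alone is blind to this. The strong convergence in $Y$ is precisely what supplies it, via the $L^2$-convergence of the Laplacian. Choosing a test function that is strictly positive on all of $\Omega$ is crucial so that the division by $\eta_n$ is unambiguous a.e.; otherwise one would only recover $\chi_n \to \chi$ a.e.\ on a smaller set, which would not suffice to conclude $\chi \in \{0,1\}$ a.e.\ in $\Omega$.
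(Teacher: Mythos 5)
Your proposal is correct and follows essentially the same route as the paper: a strictly positive test function $\psi$, the special direction $h_\psi = -\Delta\psi + \chi\psi$, and pointwise a.e.\ convergence of $\chi_n = \mathbb{1}_{\{y_n>0\}}$ extracted from the strong $Y$-convergence of $G_{\chi_n}h_\psi$ (your rearrangement $\chi_n\eta_n = h + \Delta\eta_n$ is just the paper's quotient formula for $\chi_n(x)$ written multiplicatively). The only cosmetic difference is that you rerun the weak-$*$ argument for the first part instead of simply citing \cref{prop:necessaryweak} via the inclusion $\partial_B^{ws}S(u)\subseteq\partial_B^{ww}S(u)$, which the paper does.
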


\begin{proof} 
    Let $G \in  \partial_{B}^{ws} S(u)$ be fixed but arbitrary.
    Since $\partial_{B}^{ws} S(u) \subseteq \partial_{B}^{ww} S(u)$, the preceding proposition 
    yields that there is a unique function $\chi$ satisfying \eqref{eq:chiweakstar} such that 
    $G = G_\chi$. It remains to prove that $\chi$ only takes values in $\{0,1\}$. 
    To this end, first observe that the definition of $\partial_{B}^{ws} S(u)$ implies the 
    existence of a sequence of G\^ateaux points $(u_n)\subset L^2(\Omega)$ such that 
    $u_n \weakly u$ in $L^2(\Omega)$ and $S'(u_n) h \to G h$ in $Y$
    for every $h \in L^2(\Omega)$, where, according to 
    \cref{cor:GateauxCrit}, $S'(u_n) = G_{\chi_n}$ with $\chi_n := \mathbb{1}_{\{y_n >0\}}$.
    As in the proof of \cref{prop:necessaryweak}, we choose 
    the special direction $h_\psi := -\Delta \psi + \chi  \psi \in L^2(\Omega)$,
    where $\psi \in C^\infty(\R^d)$ is again a function with $\psi > 0$ in $\Omega$ 
    and $\psi \equiv 0$ in $\R^d \setminus \Omega$.
    Then $G h_\psi = \psi$, and the strong convergence of $G_{\chi_n} h_\psi$ to $G h_\psi$
    in $Y$ allows passing to a subsequence to obtain 
    $\Delta G_{\chi_n}h_\psi \to \Delta \psi$ and $G_{\chi_n} h_\psi \to \psi$ pointwise a.e. in $\Omega$. 
    From the latter, it follows that for almost all $x \in \Omega$ there exists an $N \in \mathbb{N}$
    (depending on $x$) with $G_{\chi_n}h_\psi(x) > 0$ for all $n \geq N$, and consequently
    \begin{equation*}
        \lim_{n \to \infty} \chi_n(x) 
        = \lim_{n \to \infty} \frac{h_\psi(x) + \Delta G_{\chi_n}h_\psi(x)}{G_{\chi_n}h_\psi(x)} 
        = \frac{h_\psi(x) + \Delta \psi(x)}{\psi(x)}  = \chi(x) \text{ for a.a. } x \in \Omega. 
    \end{equation*}
    But, as $\chi_n$ takes only the values $0$ and $1$ for all $n\in\N$, 
    pointwise convergence almost everywhere is only possible 
    if $\chi \in \{0, 1\}$ a.e. in $\Omega$. This proves the claim. 
\end{proof}

As an immediate consequence of the last two results, we obtain:

\begin{corollary}\label{cor:gateaux}
    If $S$ is G\^ateaux-differentiable in a point $u \in L^2(\Omega)$, then it holds
    \begin{equation*}
        \partial_{B}^{ss} S(u) =\partial_{B}^{sw} S(u) = \partial_{B}^{ws} S(u)
        = \partial_{B}^{ww} S(u) = \{S'(u)\}. 
    \end{equation*}
\end{corollary}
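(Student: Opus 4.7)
The plan is to combine the chain of inclusions from \cref{lem:boulibasic}\ref{it:boulibasic1} with the necessary condition of \cref{prop:necessaryweak} and the characterization of G\^ateaux points in \cref{cor:GateauxCrit}. Since $\partial_B^{ss} S(u) \subseteq \partial_B^{sw} S(u),\partial_B^{ws} S(u) \subseteq \partial_B^{ww} S(u)$, and since $S'(u) \in \partial_B^{ss} S(u)$ by \cref{lem:boulibasic}\ref{it:boulibasic2}, it suffices to show the reverse inclusion $\partial_B^{ww} S(u) \subseteq \{S'(u)\}$.

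To this end, I would pick an arbitrary $G \in \partial_B^{ww} S(u)$ and invoke \cref{prop:necessaryweak} to obtain a $\chi \in L^\infty(\Omega)$ satisfying \eqref{eq:chiweakstar} with $G = G_\chi$. The key observation is that since $S$ is G\^ateaux-differentiable at $u$, \cref{cor:GateauxCrit} gives $\lambda^d(\{y=0\}) = 0$ for $y = S(u)$. Hence the sets $\{y > 0\}$ and $\{y<0\}$ cover $\Omega$ up to a Lebesgue-null set, and the constraints in \eqref{eq:chiweakstar} force $\chi = \mathbb{1}_{\{y > 0\}}$ almost everywhere.

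Comparing the defining equation \eqref{eq:linpdechi} for $G_\chi$ with this choice of $\chi$ against the G\^ateaux derivative equation \eqref{eq:gateauxpde} from \cref{cor:GateauxCrit}, we see that $G = G_\chi = S'(u)$. Since $G$ was arbitrary in $\partial_B^{ww} S(u)$, this proves $\partial_B^{ww} S(u) \subseteq \{S'(u)\}$, and the chain of inclusions then yields equality for all four subdifferentials.

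There is no real obstacle here; the argument is essentially a bookkeeping step that collects the results already established. The only subtlety worth mentioning explicitly in the write-up is that the uniqueness assertion in \cref{prop:necessaryweak} makes the identification $\chi = \mathbb{1}_{\{y>0\}}$ unambiguous, so no separate verification is needed once the measure-zero property of $\{y=0\}$ is invoked.
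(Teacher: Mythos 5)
Your argument is correct and is essentially the paper's own proof: the inclusion $\{S'(u)\}\subseteq\partial_B^{ss}S(u)$ from \cref{lem:boulibasic}, the necessary condition from \cref{prop:necessaryweak}, and $\lambda^d(\{y=0\})=0$ from \cref{cor:GateauxCrit} combine to force $G=G_{\mathbb{1}_{\{y>0\}}}=S'(u)$ for every $G\in\partial_B^{ww}S(u)$. (A minor cosmetic point: the identification $\chi=\mathbb{1}_{\{y>0\}}$ a.e.\ already follows from \eqref{eq:chiweakstar} together with the null-set property, so the uniqueness statement in \cref{prop:necessaryweak} is not actually needed here.)
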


\begin{proof}
    The inclusion $\supseteq$ was already proved in \cref{lem:boulibasic}.
    The reverse inclusion follows immediately from \cref{prop:necessaryweak,prop:necessarystrong}, and the fact that in a G\^ateaux point
    there necessarily holds $\lambda^d(\{y = 0\}) = 0$ (see \cref{cor:GateauxCrit}).
\end{proof}

\begin{remark}
    Note that even in finite dimensions, the
    Bouligand and the Clarke subdifferential can contain operators 
    other than the G\^ateaux derivative in a G\^ateaux point; see, e.g., \cite[Ex.~2.2.3]{Clarke:1990}. 
    Thus, \cref{cor:gateaux} shows that, in spite of its non-differentiability, the solution operator $S$
    is comparatively well-behaved.
\end{remark}

\begin{remark}\label{rem:subdiffmax}
    Similarly to \cref{thm:rabl}, 
    where the directional derivative of the $\max$-function appears, 
    \cref{prop:necessaryweak,prop:necessarystrong} show that elements of  
    $\partial_{B}^{ww} S(u)$ and $\partial_{B}^{ws} S(u)$ are characterized by PDEs which involve 
    a pointwise measurable selection $\chi$ of the set-valued a.e.-defined functions $\partial_B \max(0, \cdot)(y) : \Omega  \rightrightarrows [0,1]$  and $\partial_c \max(0, \cdot)(y) : \Omega  \rightrightarrows [0,1]$, respectively. Here, $\partial_B \max(0, \cdot)$ and $\partial_c \max(0, \cdot)$ denote the Bouligand and the convex subdifferential of the function $\R \ni x \mapsto \max(0, x)\in \R$, respectively. 
\end{remark}

Now that we have found necessary conditions that elements of the subdifferentials 
$\partial_{B}^{ws} S(u)$ and $\partial_{B}^{ww} S(u)$ have to fulfill, we turn to 
sufficient conditions which guarantee that a certain linear operator is an 
element of these subdifferentials. Here we focus on the subdifferentials 
$\partial_{B}^{ss} S(u)$ and $\partial_{B}^{sw} S(u)$. It will turn out that 
a linear operator is an element of these subdifferentials if it is of the form 
$G_\chi$ with $\chi$ as in \eqref{eq:chiweakstar} and \eqref{eq:chistrong}, respectively.
Thanks to \cref{lem:boulibasic}\ref{it:boulibasic1} and the necessary conditions in
\cref{prop:necessaryweak,prop:necessarystrong}, this will  
finally give a sharp characterization of all Bouligand subdifferentials 
in \cref{def:bouli}; 
see \cref{th:endgame} below. We start with the following preliminary result.

\begin{lemma}\label{lem:preliminarycharacterization}
    Let $u \in L^2(\Omega)$ be arbitrary but fixed and write $y := S(u)$.
    Assume that $\varphi \in Y$ is a function satisfying
    \begin{equation}\label{eq:nullsep}
        \lambda^d(\{y =0\} \cap \{\varphi = 0\}) = 0
    \end{equation}
    and define $\chi \in L^\infty(\Omega)$ via
    \begin{equation}\label{eq:defchi}
        \chi := \mathbb{1}_{\{y>0\}} + \mathbb{1}_{\{y=0\} } \mathbb{1}_{  \{\varphi > 0\}}.
    \end{equation}  
    Then $G_\chi$ as in Definition \ref{def:Gchi} is an element of the strong-strong Bouligand subdifferential $\partial_B^{ss} S(u)$.
\end{lemma}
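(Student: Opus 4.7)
The plan is to construct a sequence of Gâteaux points $u_n \to u$ strongly in $L^2(\Omega)$ whose derivatives $S'(u_n)$ converge pointwise-strongly to $G_\chi$ in the sense required by $\partial_B^{ss}$. Fix $t_n \to 0^+$ and let $\psi \in C^\infty(\R^d)$ with $\psi > 0$ in $\Omega$ and $\psi \equiv 0$ on $\R^d \setminus \Omega$ be the function provided by \cref{lem:randomlemma}. Define the candidate state
\begin{equation*}
y_n := y + t_n \varphi + s_n \psi \in Y,
\end{equation*}
with $s_n \in \R$ still to be chosen, and put $u_n := -\Delta y_n + \max(0, y_n) \in L^2(\Omega)$, so that $S(u_n) = y_n$ holds by construction.

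The first key step is to select $s_n$ so that $u_n$ is actually a Gâteaux point. Since $\psi > 0$ in $\Omega$, the level sets $\{y + t_n \varphi + s \psi = 0\}$ are pairwise disjoint subsets of $\Omega$ as $s$ ranges over $\R$, so at most countably many of them can have positive Lebesgue measure. Hence, for every $n$, one can pick $s_n$ with $|s_n| \leq t_n/n$ and $\lambda^d(\{y_n = 0\}) = 0$; by \cref{cor:GateauxCrit} this makes $u_n$ a Gâteaux point with $S'(u_n) = G_{\chi_n}$, where $\chi_n := \mathbb{1}_{\{y_n > 0\}}$.

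Next I would verify three convergences. First, $u_n \to u$ in $L^2(\Omega)$ follows from $t_n, s_n \to 0$, the inclusions $\Delta \varphi, \Delta \psi \in L^2(\Omega)$, and global Lipschitz continuity of the $\max$-Nemytskii operator applied to $y_n - y = t_n \varphi + s_n \psi$. Second, $\chi_n \to \chi$ pointwise a.e.\ via a case distinction: on $\{y > 0\} \cup \{y < 0\}$ the sign of $y$ dominates eventually; on $\{y = 0\} \cap \{\varphi \neq 0\}$ the factor $\varphi + (s_n/t_n)\psi$ keeps the sign of $\varphi$ because $|s_n|/t_n \leq 1/n \to 0$; and $\{y = 0\} \cap \{\varphi = 0\}$ is negligible by \eqref{eq:nullsep}. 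Since $0 \leq \chi_n \leq 1$, dominated convergence moreover yields $\chi_n \weakly^{*} \chi$ in $L^\infty(\Omega)$.

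With these ingredients in place, applying \cref{le:PDElemma} with $j \equiv 0$ and the constant sequence $h_n \equiv h$ gives $G_{\chi_n} h \to G_\chi h$ in $Y$ for every $h \in L^2(\Omega)$, whence $G_\chi \in \partial_B^{ss} S(u)$ by \cref{def:bouli}(iv). The main obstacle I anticipate is the measure-theoretic selection step for $s_n$: simultaneously forcing $\lambda^d(\{y_n = 0\}) = 0$, $s_n \to 0$, and $s_n/t_n \to 0$ is the decisive ingredient that links the construction to \cref{cor:GateauxCrit} and to the sign analysis of $\chi_n$, while the remaining convergence verifications are routine applications of Lipschitz estimates and \cref{le:PDElemma}.
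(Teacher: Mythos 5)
Your proposal is correct and takes essentially the same route as the paper's proof: perturb the state, use the disjointness/countability argument (\cref{MassEffect}) to make the perturbed controls G\^ateaux points via \cref{cor:GateauxCrit}, verify $u_n \to u$ in $L^2(\Omega)$ and $\chi_n \to \chi$ pointwise a.e.\ using \eqref{eq:nullsep}, and conclude with \cref{le:PDElemma} and \cref{def:bouli}(iv). The only (harmless) deviation is technical: the paper perturbs by $\varepsilon\varphi$ alone and selects admissible values of $\varepsilon$ outside the countable exceptional set, whereas you add the auxiliary term $s_n\psi$ with $\psi>0$ from \cref{lem:randomlemma} to enforce $\lambda^d(\{y_n=0\})=0$ for arbitrarily prescribed $t_n$ — both rest on the same measure-theoretic disjointness trick.
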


\begin{proof}
    We have to construct sequences of G\^ateaux points converging strongly to $u$ such that 
    also the corresponding G\^ateaux derivatives in an arbitrary direction $h\in L^2(\Omega)$ converge strongly 
    in $Y$ to $G_\chi h$. For this purpose, set  $y_\varepsilon := y + \varepsilon \varphi$,
    $\varepsilon \in (0, 1)$, and 
    $u_\varepsilon := - \Delta y_\varepsilon + \max(0, y_\varepsilon) \in L^2(\Omega)$. 
    Then we obtain $S( u_\varepsilon) = y_\varepsilon$, $y_\varepsilon \to y$ in $Y$ and 
    $u_\varepsilon \to u$ in $L^2(\Omega)$ as $\varepsilon \to 0$. 
    Choose now arbitrary but fixed representatives of $y$ and $\varphi$ and 
    define $Z:= \{y = 0\} \cap \{\varphi = 0\}$. Then for all $\varepsilon \neq \varepsilon'$ 
    and  all $x \in \Omega$, we have
    \begin{equation*}
        y(x) + \varepsilon \varphi(x) = 0 \quad\text{and}\quad  y(x) + \varepsilon' \varphi(x)=0 \iff x \in  Z,
    \end{equation*}
    i.e., the sets in the collection 
    $(\{y + \varepsilon \varphi =0\} \setminus Z)_{\varepsilon \in (0, 1)}$ are disjoint 
    (and obviously Lebesgue measurable). 
    Furthermore, the underlying measure space $(\Omega, \mathcal{L}(\Omega), \lambda^d)$ is finite. 
    Thus, we may apply \cref{MassEffect} to obtain a $\lambda^1$-zero set 
    $N \subset (0, 1)$ such that
    \begin{equation*}
        \lambda^d(\{y + \varepsilon \varphi = 0\} )  
        \leq \lambda^d (Z) + \lambda^d(\{y + \varepsilon \varphi =0\} \setminus Z) = 0
    \end{equation*}
    for all $\varepsilon \in E := (0, 1) \setminus N$. 
    According to \cref{cor:GateauxCrit}, this implies that $S$ is G\^ateaux-differentiable 
    in $u_\varepsilon$ for all $\varepsilon \in E$ with 
    $S'(u_\varepsilon) = G_{\chi_\varepsilon}$ where 
    $\chi_\varepsilon := \mathbb{1}_{\{y + \varepsilon \varphi >0\}}$.
    Consider now an arbitrary but fixed sequence $(\varepsilon_n) \subset E$ 
    with $\varepsilon_n \to 0$ as $n \to \infty$ 
    and fix for the time being a direction $h \in L^2(\Omega)$. 
    Then $\delta_{\varepsilon_n} := S'(u_{\varepsilon_n}) h = G_{\chi_{\varepsilon_n}} h$ satisfies
    \begin{equation*}
        -\Delta \delta_{\varepsilon_n}  +  \chi_{\varepsilon_n} \delta_{\varepsilon_n} = h, 
    \end{equation*}
    and it holds that
    \begin{equation*}
        \lim_{n \to \infty} \chi_{\varepsilon_n}(x) = \chi(x) \text{ f.a.a. } x\in \Omega 
        \quad \text{and} \quad 
        \chi_{\varepsilon_n} \weakly{*} \chi \text{ in }L^\infty(\Omega),
    \end{equation*}
    where $\chi$ is as defined in \eqref{eq:defchi}. Note that according to assumption 
    \eqref{eq:nullsep}, the case $y(x)  = \varphi(x) = 0$ is negligible here. 
    Using \cref{le:PDElemma}, we now obtain that 
    $\delta_{\varepsilon_n} = S'(u_{\varepsilon_n})h$ converges strongly in $Y$ 
    to $G_\chi h$. Since $h\in L^2(\Omega)$ was arbitrary, this proves the claim. 
\end{proof}

In the following, we successively sharpen the assertion of 
\cref{lem:preliminarycharacterization} by means of \cref{le:PDElemma}
and the approximation results for characteristic functions proven in \cref{sec:approx}.

\begin{proposition}[sufficient condition for $\partial_B^{ss} S(u)$]\label{prop:suffstrong}
    Let $u \in L^2(\Omega)$ be arbitrary but fixed and set $y := S(u)$.
    If $\chi \in L^\infty(\Omega)$ satisfies
    \begin{equation}\label{eq:chicondstrong}
        \chi \in \{0, 1\} \text{ a.e. in }\Omega, \quad \chi = 1 \text{ a.e. in } \{y > 0\},
        \quad \text{and} \quad \chi = 0 \text{ a.e. in } \{y < 0\},
    \end{equation}
    then $G_\chi \in \partial_B^{ss} S(u)$.
\end{proposition}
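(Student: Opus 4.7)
The plan is to reduce the statement to \cref{lem:preliminarycharacterization} via a closure argument based on \cref{prop:stabss} and \cref{le:PDElemma}. Thanks to the structural constraints in \eqref{eq:chicondstrong}, any admissible $\chi$ can be written as $\chi = \mathbb{1}_{\{y>0\}} + \mathbb{1}_A$ for some Lebesgue measurable set $A \subseteq \{y=0\}$. Hence the task reduces to showing that every such $\chi$ can be reached as the pointwise a.e.\ limit of functions $\chi_n$ of the special form already treated in \cref{lem:preliminarycharacterization}.

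First I would fix an arbitrary $\chi$ satisfying \eqref{eq:chicondstrong} and appeal to the approximation results to be established in \cref{sec:approx} to produce a sequence $(\varphi_n) \subset Y$ such that $\lambda^d(\{y=0\}\cap\{\varphi_n=0\})=0$ for every $n$ and $\mathbb{1}_{\{y=0\}}\mathbb{1}_{\{\varphi_n>0\}} \to \mathbb{1}_A$ pointwise a.e.\ in $\Omega$. Setting
\begin{equation*}
    \chi_n := \mathbb{1}_{\{y>0\}} + \mathbb{1}_{\{y=0\}}\mathbb{1}_{\{\varphi_n>0\}},
\end{equation*}
each $\chi_n$ is exactly of the form covered by \cref{lem:preliminarycharacterization}, so $G_{\chi_n} \in \partial_B^{ss}S(u)$ for every $n\in\N$, and by construction $\chi_n \to \chi$ pointwise a.e.\ in $\Omega$.

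Next I would fix an arbitrary direction $h \in L^2(\Omega)$ and invoke \cref{le:PDElemma} (applied with $j\equiv 0$ and the constant right-hand-side sequence $u_n \equiv h$) to obtain $G_{\chi_n}h \to G_\chi h$ strongly in $Y$. The hypotheses of that lemma are met because $\chi_n \in [0,1]$ yields a uniform $L^\infty$-bound together with $\chi_n \weakly^{*} \chi$ in $L^\infty(\Omega)$ via dominated convergence, while pointwise a.e.\ convergence is built into the construction. Finally, applying \cref{prop:stabss} with the constant sequence $u_n \equiv u$ and $G_n := G_{\chi_n}$ delivers $G_\chi \in \partial_B^{ss}S(u)$, which is the claim.

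The main obstacle is clearly the construction of the approximating potentials $\varphi_n \in Y$, that is, the approximation statement deferred to \cref{sec:approx}: given an arbitrary measurable subset $A \subseteq \{y=0\}$, one must produce Sobolev-regular functions $\varphi_n$ whose positivity sets recover $A$ in the pointwise a.e.\ limit on $\{y=0\}$ while still satisfying the separation hypothesis $\lambda^d(\{y=0\}\cap\{\varphi_n=0\})=0$ required by \cref{lem:preliminarycharacterization}. Once this approximation result is available, all remaining steps are direct applications of the stability and passage-to-the-limit machinery already established.
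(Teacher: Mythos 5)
Your strategy is the paper's strategy: write $\chi = \mathbb{1}_{\{y>0\}} + \mathbb{1}_{\{y=0\}}\mathbb{1}_A$, approximate by the special multipliers of \cref{lem:preliminarycharacterization}, pass to the limit with \cref{le:PDElemma}, and conclude with the closedness result \cref{prop:stabss} applied to the constant sequence $u_n\equiv u$. All of these steps are carried out correctly.

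The one caveat concerns the approximation statement you defer to \cref{sec:approx}: as you formulate it (for an \emph{arbitrary} measurable $A\subseteq\{y=0\}$, find $\varphi_n\in Y$ with $\lambda^d(\{y=0\}\cap\{\varphi_n=0\})=0$ and $\mathbb{1}_{\{y=0\}}\mathbb{1}_{\{\varphi_n>0\}}\to\mathbb{1}_A$ a.e.), this is \emph{not} what the appendix provides. \Cref{lem:approx}\ref{it:approxopen} yields positivity-set approximation only for \emph{open} sets, and the paper therefore proceeds in two stages: first it treats open $B$ via the $\varphi_n$ from \cref{lem:approx}\ref{it:approxopen}, obtaining $G_\chi\in\partial_B^{ss}S(u)$ for the corresponding multipliers by one application of \cref{prop:stabss}; then it approximates a general measurable $B$ by open sets $B_n$ (\cref{lem:approx}\ref{it:approxborel}, outer regularity) and applies \cref{le:PDElemma} and \cref{prop:stabss} a second time. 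Your one-shot version of the approximation is true, but proving it essentially amounts to concatenating these two stages (e.g.\ via a diagonal extraction using convergence in measure on the finite measure space $\Omega$), so the ``main obstacle'' you flag is precisely where the remaining content of the paper's proof lives; with that intermediate open-set step (or the double application of \cref{prop:stabss}, which already encapsulates the diagonalization) inserted, your argument is complete and coincides with the paper's.
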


\begin{proof}
    Since $\chi \in L^\infty(\Omega)$ takes only the values $0$ and $1$, 
    there exists a Lebesgue measurable set $B  \subseteq \Omega$ with $\chi = \mathbb{1}_B$.  
    We now proceed in two steps:
    \begin{itemize}
        \item[(i)] If $B\subseteq\Omega$ is open, then we know from  
            \cref{lem:approx}\ref{it:approxopen} that there exists a sequence $(\varphi_n) \subset Y$ 
            of functions satisfying \eqref{eq:nullsep} such that 
            \begin{equation}\label{eq:einsphi}
                \qquad \mathbb{1}_{\{\varphi_n > 0\}} \weakly^{*} \mathbb{1}_B 
                \text{ in }L^\infty(\Omega) 
                \quad \text{and}\quad 
                \mathbb{1}_{\{\varphi_n > 0\}} \to \mathbb{1}_{B}\text{ pointwise a.e. in }\Omega.
            \end{equation}
            Let us abbreviate $\chi_n := \mathbb{1}_{\{y>0\}} 
            + \mathbb{1}_{\{y=0\} }\mathbb{1}_{\{\varphi_n > 0\}} \in L^\infty(\Omega)$.
            Then, thanks to \eqref{eq:chicondstrong} and \eqref{eq:einsphi}, we arrive at
            \begin{equation}\label{eq:chikonv}
                \chi_n \weakly^{*}  \chi \text{ in }L^\infty(\Omega) 
                \quad \text{and}\quad 
                \chi_n \to  \chi \text{ pointwise a.e. in }\Omega.  
            \end{equation}
            Moreover, due to \cref{lem:preliminarycharacterization}, we know that
            $G_{\chi_n} \in  \partial_B^{ss} S(u)$ for all $n\in\N$ and, 
            from \cref{le:PDElemma} and \eqref{eq:chikonv}, we obtain 
            $G_{\chi_n} h  \to G_{\chi} h$ strongly in $Y$ for all $h \in L^2(\Omega)$. 
            \Cref{prop:stabss} now gives $G_{\chi} \in \partial_B^{ss} S(u)$ as claimed. 
        \item[(ii)] Assume now that $\chi = \mathbb{1}_B$ for some Lebesgue measurable set $B \subseteq \Omega$. 
            Then \cref{lem:approx}\ref{it:approxborel} implies the existence of a sequence of 
            open sets $B_n \subseteq \Omega$ such that
            \begin{equation}\label{eq:Beins}
                \mathbb{1}_{B_n} \overset{*}{\rightharpoonup} \mathbb{1}_B 
                \text{ in }L^\infty(\Omega)
                \quad \text{and}\quad \mathbb{1}_{B_n} \to  \mathbb{1}_{B}
                \text{ pointwise a.e. in }\Omega.  
            \end{equation}
            Similarly to above, we abbreviate $\chi_n := \mathbb{1}_{\{y>0\}} 
            + \mathbb{1}_{\{y=0\} }\mathbb{1}_{B_n} \in L^\infty(\Omega)$ so that 
            \eqref{eq:Beins} again yields the convergence in \eqref{eq:chikonv}. 
            Since from (i) we know  that $G_{\chi_n} \in \partial_B^{ss} S(u)$ for all $n\in\N$, 
            we can argue completely analogously to (i) to prove $G_{\chi} \in \partial_B^{ss} S(u)$ 
            in the general case. 
            \qedhere
    \end{itemize}
\end{proof}

\begin{proposition}[sufficient condition for $\partial_B^{sw} S(u)$]\label{prop:suffweak}
    Let $u \in L^2(\Omega)$ be arbitrary but fixed and $y := S(u)$.
    If $\chi \in L^\infty(\Omega)$ satisfies
    \begin{equation}\label{eq:chicondweak}
        0 \leq \chi \leq 1 \text{ a.e. in }\Omega, \quad \chi = 1 \text{ a.e. in } \{y > 0\},
        \quad \text{and} \quad \chi = 0 \text{ a.e. in } \{y < 0\}, 
    \end{equation}
    then $G_\chi \in \partial_B^{sw} S(u)$.
\end{proposition}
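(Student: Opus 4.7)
The plan is to reduce the case of a general weight $\chi$ taking values in $[0,1]$ to the $\{0,1\}$-valued case that has already been handled in \cref{prop:suffstrong}, and then to invoke the strong-weak closedness result \cref{prop:stabsw} to pass to the limit. The key is that on the ``free boundary'' $\{y=0\}$ we now allow intermediate values, but these can be produced in the weak-$\ast$ limit of a sequence of $\{0,1\}$-valued weights.

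Concretely, I would proceed as follows. First, using an approximation result for $L^\infty$-functions by characteristic functions that is weak-$\ast$-dense in $\{\xi \in L^\infty(\Omega) : 0 \le \xi \le 1\}$ (presumably one of the statements in the appendix \cref{sec:approx} analogous to those used in the previous proof), construct a sequence $(\chi_n) \subset L^\infty(\Omega)$ with $\chi_n \in \{0,1\}$ a.e., $\chi_n = 1$ a.e.\ on $\{y>0\}$, $\chi_n = 0$ a.e.\ on $\{y<0\}$, and
\begin{equation*}
    \chi_n \weakly^{*} \chi \quad \text{in } L^\infty(\Omega).
\end{equation*}
This can be done by leaving $\chi$ untouched on $\{y>0\}\cup\{y<0\}$ (where it is already $\{0,1\}$-valued by \eqref{eq:chicondweak}) and approximating its restriction to $\{y=0\}$ by $\{0,1\}$-valued functions in the weak-$\ast$ topology, which is the standard ``thin stripes'' construction on a fine partition of $\Omega$.

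Second, since each $\chi_n$ satisfies the hypotheses of \cref{prop:suffstrong} with the same state $y = S(u)$, we obtain
\begin{equation*}
    G_{\chi_n} \in \partial_B^{ss} S(u) \subseteq \partial_B^{sw} S(u) \qquad \forall\, n\in \N
\end{equation*}
by \cref{lem:boulibasic}\ref{it:boulibasic1}. Third, for each fixed $h \in L^2(\Omega)$, let $w_n := G_{\chi_n} h$ and $w := G_\chi h$. Then \cref{le:PDElemma}, applied with $j \equiv 0$, $u_n \equiv h$, and the weights $\chi_n \weakly^{*} \chi$, immediately yields $w_n \weakly w$ in $Y$, i.e., $G_{\chi_n} h \weakly G_\chi h$ in $Y$ for every $h \in L^2(\Omega)$.

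Finally, I would apply the strong-weak closedness result \cref{prop:stabsw} with the constant sequence $u_n := u$ (which trivially converges strongly to $u$), $G_n := G_{\chi_n} \in \partial_B^{sw} S(u_n)$, and $G := G_\chi$, whose membership in $\LL(L^2(\Omega),Y)$ is clear from the definition of $G_\chi$. This yields $G_\chi \in \partial_B^{sw} S(u)$, as desired. The main obstacle is the first step, namely producing the approximating sequence $\chi_n$ with the required weak-$\ast$ convergence and boundary values; if the appendix does not already provide a readily applicable statement, a direct partition-based construction restricted to the set $\{y=0\}$ fills the gap, and the remainder of the argument is a clean combination of \cref{prop:suffstrong}, \cref{le:PDElemma}, and \cref{prop:stabsw}.
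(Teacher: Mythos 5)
Your proposal is correct and follows essentially the same route as the paper: reduce to $\{0,1\}$-valued weights handled by \cref{prop:suffstrong}, pass the weak-$\ast$ convergence of the weights through \cref{le:PDElemma} to get $G_{\chi_n}h \weakly G_\chi h$ in $Y$, and conclude with the closedness result \cref{prop:stabsw} applied with the constant sequence $u_n \equiv u$. The only (organizational) difference is that the paper carries out the weak-$\ast$ approximation of $\chi$ on $\{y=0\}$ in two stages---first for simple functions via \cref{lem:approx}\ref{it:approxsimple}, then for general $\chi$ by pointwise a.e.\ approximation with simple functions---which is exactly the partition-based construction you defer to in your first step.
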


\begin{proof}
    We again proceed in two steps:
    \begin{itemize}
        \item[(i)] If $\chi$ is a simple function of the form
            $
            \chi := \sum_{k=1}^K c_k \mathbb{1}_{B_k}
            $
            with $c_k \in (0, 1]$ for all $k$, $K \in \mathbb{N}$, and $B_k \subseteq \Omega$ Lebesgue measurable 
            and mutually disjoint, then we know from \cref{lem:approx}\ref{it:approxsimple} 
            that there exists a sequence of Lebesgue measurable sets $A_n \subseteq \Omega$ such that
            $\mathbb{1}_{A_n} \weakly^{*} \chi$ in $L^\infty(\Omega)$.
            In view of \eqref{eq:chicondweak}, this yields
            \begin{equation*}
                \chi_n := \mathbb{1}_{\{y>0\}} + \mathbb{1}_{\{y=0\} } \mathbb{1}_{A_n} 
                \weakly^{*}  \chi   \text{ in }L^\infty(\Omega)
            \end{equation*}    
            so that, by \cref{le:PDElemma}, we obtain $G_{\chi_n} h \rightharpoonup  G_{\chi} h$
            in $Y$ for all $h \in L^2(\Omega)$. Moreover, 
            from \cref{prop:suffstrong}, we already know that
            $G_{\chi_n} \in \partial_B^{ss} S(u) \subseteq \partial_B^{sw} S(u)$ for all $n\in\N$.
            Therefore \cref{prop:stabsw} gives $G_{\chi} \in \partial_B^{sw} S(u)$ as claimed. 
        \item[(ii)] For an arbitrary but fixed $\chi \in L^\infty(\Omega)$ satisfying \eqref{eq:chicondweak}, 
            measurability implies the existence of a sequence of simple functions 
            satisfying \eqref{eq:chicondweak} and converging pointwise a.e.\ to $\chi$.
            (Note that the pointwise projection of a simple function onto the set of functions satisfying \eqref{eq:chicondweak} remains simple as $y$ is fixed and measurable.)
            Since pointwise a.e.\ convergence and a uniform bound in $L^\infty(\Omega)$ imply \mbox{weak-$*$} convergence in $L^\infty(\Omega)$, we can now apply (i) and again 
            \cref{le:PDElemma} and \cref{prop:stabsw} to obtain the claim. 
            \qedhere
    \end{itemize}
\end{proof}

Thanks to \cref{lem:boulibasic}\ref{it:boulibasic1}, the necessary conditions 
in \cref{prop:necessaryweak,prop:necessarystrong}, respectively, 
in combination with the sufficient conditions 
in \cref{prop:suffweak,prop:suffstrong}, respectively, immediately 
imply the following sharp characterization of the Bouligand 
subdifferentials of $S$.

\begin{theorem}[precise characterization of the subdifferentials of $S$]\label{th:endgame}
    Let $u \in L^2(\Omega)$ be arbitrary but fixed and set $y:= S(u)$. Then:
    \begin{itemize}
        \item[(i)] It holds $\partial_B^{ws} S(u) =  \partial_B^{ss} S(u)$.
            Moreover, $G \in  \partial_{B}^{ss} S(u)$ if and only if there exists a function 
            $\chi \in L^\infty(\Omega)$ satisfying 
            \begin{equation*}
                \qquad \chi \in \{0, 1\} \text{ a.e. in }\Omega, \quad \chi = 1 \text{ a.e. in } \{y > 0\},
                \quad \text{and} \quad \chi = 0 \text{ a.e. in } \{y < 0\}
            \end{equation*}
            such that $G = G_\chi$.
            Furthermore, for each $G \in  \partial_{B}^{ss} S(u)$ the associated $\chi$ is unique. 
        \item[(ii)] It holds $\partial_B^{ww} S(u) =   \partial_B^{sw} S(u)$.
            Moreover, $G \in  \partial_{B}^{sw} S(u)$ if and only if there exists a function 
            $\chi \in L^\infty(\Omega)$ satisfying 
            \begin{equation*}
                \qquad 0 \leq \chi \leq 1 \text{ a.e. in }\Omega, \quad \chi = 1 \text{ a.e. in } \{y > 0\},
                \quad \text{and} \quad \chi = 0 \text{ a.e. in } \{y < 0\}
            \end{equation*}
            such that $G = G_\chi$.
            Furthermore, for each $G \in  \partial_{B}^{sw} S(u)$ the associated $\chi$ is unique. 
    \end{itemize}
\end{theorem}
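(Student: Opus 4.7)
The plan is to chain together the two pairs of necessary and sufficient conditions already proved, using the inclusions $\partial_B^{ss}S(u) \subseteq \partial_B^{ws}S(u) \subseteq \partial_B^{ww}S(u)$ and $\partial_B^{ss}S(u) \subseteq \partial_B^{sw}S(u) \subseteq \partial_B^{ww}S(u)$ from Lemma~\ref{lem:boulibasic}\ref{it:boulibasic1} to collapse the four subdifferentials into two equivalence classes. The heavy lifting is in the preceding propositions; here there is no genuine new work, only a careful bookkeeping of implications.

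For part (i), I would start by picking $G \in \partial_B^{ws}S(u)$. Proposition~\ref{prop:necessarystrong} then produces a unique $\chi \in L^\infty(\Omega)$ with $\chi \in \{0,1\}$ a.e., $\chi = 1$ a.e.\ on $\{y>0\}$ and $\chi = 0$ a.e.\ on $\{y<0\}$ such that $G = G_\chi$. Proposition~\ref{prop:suffstrong} then tells me that this very $G_\chi$ lies in $\partial_B^{ss}S(u)$, which gives $\partial_B^{ws}S(u) \subseteq \partial_B^{ss}S(u)$. Combined with the reverse inclusion from Lemma~\ref{lem:boulibasic}\ref{it:boulibasic1}, this yields $\partial_B^{ws}S(u) = \partial_B^{ss}S(u)$, and the ``iff'' description of the common set follows by combining the same two propositions. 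Uniqueness of the associated $\chi$ is already contained in Proposition~\ref{prop:necessaryweak} (the argument using the strictly positive test function $\psi$), so it needs only to be cited.

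For part (ii), the dual argument runs in the other direction: for $G \in \partial_B^{ww}S(u)$, Proposition~\ref{prop:necessaryweak} supplies a unique $\chi \in L^\infty(\Omega)$ with $0 \le \chi \le 1$ a.e.\ and the prescribed boundary behaviour on $\{y>0\}$ and $\{y<0\}$, with $G = G_\chi$; Proposition~\ref{prop:suffweak} then places $G_\chi$ in $\partial_B^{sw}S(u)$. Together with $\partial_B^{sw}S(u) \subseteq \partial_B^{ww}S(u)$, this forces equality, and the characterization is again a direct combination of the necessary and sufficient conditions. Uniqueness of $\chi$ is inherited from Proposition~\ref{prop:necessaryweak}.

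There is essentially no obstacle in this particular argument; the substantive difficulties have already been absorbed into the sufficient-condition propositions (especially Proposition~\ref{prop:suffstrong}, which required the construction from Lemma~\ref{lem:preliminarycharacterization} and the approximation machinery of Section~\ref{sec:approx}) and into the strong-strong and strong-weak closedness results (Propositions~\ref{prop:stabss} and \ref{prop:stabsw}). The only point that merits a line of care is to confirm that the $\chi$ produced by the necessary propositions and the $\chi$ consumed by the sufficient propositions are literally the same object, which is immediate from the uniqueness statement in Proposition~\ref{prop:necessaryweak}. Consequently the proof can be written in just a few lines, essentially as a diagram chase through Lemma~\ref{lem:boulibasic}, Propositions~\ref{prop:necessaryweak}, \ref{prop:necessarystrong}, \ref{prop:suffstrong}, and \ref{prop:suffweak}.
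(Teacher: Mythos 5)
Your proposal is correct and coincides with the paper's own argument: the theorem is stated there as an immediate consequence of Lemma~\ref{lem:boulibasic}\ref{it:boulibasic1} together with the necessary conditions (Propositions~\ref{prop:necessaryweak} and~\ref{prop:necessarystrong}) and the sufficient conditions (Propositions~\ref{prop:suffstrong} and~\ref{prop:suffweak}), exactly the diagram chase you describe. Your bookkeeping of the inclusions and of the uniqueness of $\chi$ is accurate, so nothing is missing.
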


\begin{remark}\label{rem:subdiffmax2}
    \Cref{th:endgame} shows that it does not matter whether we use the weak or the 
    strong topology for the approximating sequence $u_n \in L^2(\Omega)$ 
    in the definition of the subdifferential; only the choice of 
    the operator topology makes a difference. 
    We further see that the elements of the strong resp.~weak Bouligand subdifferential 
    are precisely those operators $G_\chi$ generated by a function $\chi\in L^\infty(\Omega)$ 
    that is obtained from a pointwise measurable selection of the 
    Bouligand resp.~convex subdifferential of the $\max$-function, 
    cf.~\cref{rem:subdiffmax}.
    Note that for all $\chi_1, \chi_2 \in L^\infty(\Omega)$ with $\chi_1 \geq 0$, $\chi_2 \geq 0$ a.e.\ in $\Omega$, all $\alpha \in [0,1]$, and all $\eta \in Y$, it holds 
    \begin{equation*}
        \alpha G_{\chi_1}^{-1}(\eta) + (1-\alpha)G_{ \chi_2}^{-1}(\eta) = -\Delta \eta + \alpha \chi_1 \eta + (1-\alpha) \chi_2 \eta = G_{\alpha \chi_1 + (1-\alpha)\chi_2}^{-1}(\eta).
    \end{equation*}
    This implies that the set $\{G^{-1} : G \in \partial_B^{sw} S(u)\}$ is convex and contains the convex hull of the set  $\{G^{-1} : G \in \partial_B^{ss} S(u)\}$. We point out that the convex combination of two elements of, e.g.,  $\partial_B^{sw} S(u)$ is typically not an element of $\partial_B^{sw} S(u)$ (due to the bilinear term $\chi \eta$ in the definition of $G_\chi$). The above ``convexification effect'' appears only when we consider the inverse operators.
\end{remark}

\section{First-order optimality conditions}\label{sec:fon}

In this section we turn our attention back to the optimal control problem \eqref{eq:p}, 
where we are mainly interested in the derivation of first-order necessary optimality conditions 
involving dual variables. Due to the non-smoothness of the 
control-to-state mapping $S$ caused by the $\max$-function in \eqref{eq:pde}, 
the standard procedure based on the adjoint of the G\^ateaux derivative of $S$ cannot be applied. 
Instead, regularization and relaxation methods are frequently used to derive optimality conditions
as in, e.g., \cite{barbu}.
We will follow the same approach and derive an optimality system in this way 
in the next subsection. Since the arguments are rather standard, we keep the discussion concise;
the main issue here is to carry out the passage to the limit in the topology of $Y$.
We again emphasize that the optimality conditions themselves are not remarkable at all. 
However, in \cref{sec:Bstat}, we will give a new interpretation of the 
optimality system arising through regularization by means of the Bouligand subdifferentials 
from \cref{sec:bouli} (cf.~\cref{theorem:summary}), which is the main 
result of this section.

\subsection{Regularization and passage to the limit}\label{sec:limit}

For the rest of this section, let $\bar{u}\in L^2(\Omega)$ be an arbitrary local 
minimizer for \eqref{eq:p}. We follow a widely used approach (see, e.g., \cite{mp84}) and define 
our regularized optimal control problem as 
\begin{equation}\tag{P$_\varepsilon$}\label{eq:peps}
    \left.
        \begin{aligned}
            &\min\nolimits_{u \in L^2(\Omega), y \in H_0^1(\Omega)}  \quad J(y,u) + \frac{1}{2}\,\|u - \bar{u}\|_{L^2(\Omega)}^2\\
            &\text{s.t.} \quad -\Delta y + \mx_\varepsilon(y) = u \; \text{ in }\Omega
        \end{aligned}
    \qquad \right\}
\end{equation}
with a regularized version of the $\max$-function satisfying the following assumptions.
\begin{assumption}\label{assu:maxeps}
    The family of functions $\mx_\varepsilon: \R \to \R$ satisfies
    \begin{enumerate}[label=$(\roman*)$]
        \item\label{it:maxeps1} $\mx_\varepsilon \in C^1(\R)$ for all $\varepsilon > 0$;
        \item\label{it:maxeps2} there is a constant $C>0$ such that 
            $|\mx_\varepsilon(x) - \max(0,x)| \leq C\,\varepsilon$ for all $x\in \R$;
        \item\label{it:maxeps3} for all $x\in \R$ and all $\varepsilon > 0$, 
            there holds $0 \leq \mx_\varepsilon'(x) \leq 1$;
        \item\label{it:maxeps4} for every $\delta > 0$, the sequence 
            $(\mx_\varepsilon')_{\varepsilon > 0}$ converges uniformly to $1$ on $[\delta, \infty)$ and 
            uniformly to $0$ on $(-\infty, -\delta]$ as $\varepsilon \to 0^+$.
    \end{enumerate}
\end{assumption}

There are numerous possibilities to construct families of functions satisfying 
\cref{assu:maxeps}; we only refer to the regularized $\max$-functions used in \cite{mp84, sw13}.
As for the $\max$-function, we will denote the Nemytskii operator 
associated with $\mx_\varepsilon$ by the same symbol.

\begin{lemma}\label{lem:Sepsfrechet}
    For every $u \in L^2(\Omega)$, there exists a unique solution $y_\varepsilon \in Y$ 
    of the PDE in \eqref{eq:peps}. The associated solution operator 
    $S_\varepsilon : L^2(\Omega) \to Y$ is weakly continuous and Fr\'echet-differentiable. Its derivative 
    at $u\in L^2(\Omega)$ in direction $h\in L^2(\Omega)$ is given by the unique solution $\delta \in Y$ to
    \begin{equation}\label{eq:linpdeeps}
        -\Delta \delta + \mx_\varepsilon'(y_\varepsilon)\delta = h,
    \end{equation}  
    where $y_\varepsilon = S_\varepsilon(u)$.
\end{lemma}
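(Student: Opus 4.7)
The plan is to adapt the arguments from the proof of \cref{prop:basic} for the first three assertions and then perform an implicit-function-style remainder analysis for Fr\'echet differentiability. By \cref{assu:maxeps}\ref{it:maxeps1} and \ref{it:maxeps3}, $\mx_\varepsilon: \R \to \R$ is monotonically non-decreasing and globally $1$-Lipschitz, so Browder--Minty together with the usual bootstrap from $H_0^1(\Omega)$ to $Y$ yields a unique $y_\varepsilon \in Y$ together with the global Lipschitz continuity of $S_\varepsilon: L^2(\Omega) \to Y$. Weak continuity of $S_\varepsilon$ is then an immediate consequence of \cref{le:PDElemma} applied with $j = \mx_\varepsilon$ and $\chi_n \equiv \chi \equiv 0$.

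For the differentiability claim, I would first verify that the candidate $h \mapsto \delta$ defined via \eqref{eq:linpdeeps} is an element of $\LL(L^2(\Omega), Y)$: since $\mx_\varepsilon'(y_\varepsilon) \in L^\infty(\Omega)$ takes values in $[0,1]$, Lax--Milgram together with the same bootstrapping as before yields existence, uniqueness, and the estimate $\|\delta\|_Y \leq C\,\|h\|_{L^2}$. Denoting this candidate derivative by $G$ and writing $y_h := S_\varepsilon(u+h)$, $z := y_h - y_\varepsilon$, $r := z - Gh$, subtracting the PDEs for $y_h$, $y_\varepsilon$ and $Gh$ and adding $\mx_\varepsilon'(y_\varepsilon)\,r$ on both sides yields
\begin{equation*}
    -\Delta r + \mx_\varepsilon'(y_\varepsilon)\, r = -\bigl[\mx_\varepsilon(y_\varepsilon + z) - \mx_\varepsilon(y_\varepsilon) - \mx_\varepsilon'(y_\varepsilon)\, z\bigr] =: -R.
\end{equation*}
The same monotone-coefficient estimate as above then gives $\|r\|_Y \leq C\,\|R\|_{L^2}$, so the entire issue reduces to showing $\|R\|_{L^2} = o(\|h\|_{L^2})$.

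This last estimate is the main obstacle, since Nemytskii operators induced by nonlinear $C^1$ functions are in general \emph{not} Fr\'echet differentiable as maps between $L^2$-spaces. My plan is to exploit the additional regularity that $z$ inherits from $h$ through the equation: the Lipschitz estimate for $S_\varepsilon$ gives $\|z\|_Y \leq C\,\|h\|_{L^2}$, and the Sobolev embedding $H_0^1(\Omega) \embed L^p(\Omega)$ for some $p > 2$ (with $p = 2d/(d-2)$ for $d \geq 3$ and any finite $p$ otherwise) yields $\|z\|_{L^p} \leq C\,\|h\|_{L^2}$. The integral form of Taylor's theorem applied pointwise together with H\"older's inequality for exponents satisfying $1/p + 1/q = 1/2$ then gives
\begin{equation*}
    \|R\|_{L^2} \leq \|\omega_h\|_{L^q(\Omega)}\,\|z\|_{L^p(\Omega)}, \quad \omega_h(x) := \sup\nolimits_{t\in[0,1]} \bigl|\mx_\varepsilon'(y_\varepsilon(x) + t\, z(x)) - \mx_\varepsilon'(y_\varepsilon(x))\bigr|,
\end{equation*}
so it suffices to prove $\|\omega_h\|_{L^q(\Omega)} \to 0$ as $\|h\|_{L^2}\to 0$. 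For an arbitrary sequence $h_n \to 0$ in $L^2(\Omega)$, the associated $z_n \to 0$ in $Y$ and hence, along a subsequence, pointwise a.e.\ in $\Omega$; the continuity of $\mx_\varepsilon'$ then forces $\omega_{h_n} \to 0$ pointwise a.e., and the uniform bound $|\omega_{h_n}| \leq 1$ on the bounded set $\Omega$ allows Lebesgue's dominated convergence theorem to conclude $\omega_{h_n} \to 0$ in $L^q(\Omega)$. A standard subsequence-subsequence argument finally upgrades this to convergence along the whole sequence, which completes the proof.
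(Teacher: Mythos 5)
Your proposal is correct, and its first two parts (Browder--Minty plus bootstrapping for unique solvability in $Y$, and weak continuity via \cref{le:PDElemma} with $j=\mx_\varepsilon$, $\chi_n\equiv 0$) coincide with the paper's argument. For the Fr\'echet differentiability you take a genuinely different route: the paper simply notes that, by \cref{assu:maxeps}(i) and (iii), the Nemytskii operator $\mx_\varepsilon$ is continuously Fr\'echet-differentiable from $H^1_0(\Omega)$ to $L^2(\Omega)$, that the linearized equation \eqref{eq:linpdeeps} is uniquely solvable, and then invokes the implicit function theorem; you instead take the solution operator of \eqref{eq:linpdeeps} as a candidate derivative and estimate the remainder directly, reducing everything to $\|\mx_\varepsilon(y_\varepsilon+z)-\mx_\varepsilon(y_\varepsilon)-\mx_\varepsilon'(y_\varepsilon)z\|_{L^2}=o(\|h\|_{L^2})$, which you prove by Taylor's formula, H\"older with the Sobolev gain $H^1_0(\Omega)\embed L^p(\Omega)$, $p>2$, and dominated convergence (with the standard subsequence--subsequence upgrade). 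In substance your Taylor/H\"older/DCT step is exactly the proof of the superposition-operator differentiability that the paper cites as known -- you correctly identify that the naive $L^2\to L^2$ differentiability fails and that the gain in integrability coming from $\|z\|_Y\leq C\|h\|_{L^2}$ is what rescues the argument -- so your version is self-contained and only needs differentiability at the single point $y_\varepsilon$ rather than continuous differentiability of the Nemytskii operator, at the price of being longer; the paper's IFT route is shorter and yields, as a byproduct, that $S_\varepsilon$ is even continuously differentiable, which is not needed for the lemma. Two cosmetic points you may wish to make explicit: measurability of $\omega_h$ (the supremum over $t\in[0,1]$ can be taken over rational $t$ by continuity of $\mx_\varepsilon'$), and the trivial modification of the exponents for $d\leq 2$ (any finite $p$, resp.\ $p=\infty$ for $d=1$), both of which you essentially indicate.
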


\begin{proof}
    The arguments are standard. The monotonicity of $\mx_\varepsilon$ by 
    \cref{assu:maxeps}\ref{it:maxeps3} yields the existence of a unique solution, 
    and bootstrapping implies that this is an element of $Y$. 
    The weak continuity of $S_\varepsilon$ follows from \cref{le:PDElemma} 
    in exactly the same way as \cref{cor:Vollstetig}.
    Due to \cref{assu:maxeps}\ref{it:maxeps1} and \ref{it:maxeps3}, 
    the Nemytskii operator associated with $\mx_\varepsilon$ is continuously 
    Fr\'echet-differentiable from $H^1_0(\Omega)$ to $L^2(\Omega)$ and, 
    owing to the non-negativity of $\mx_\varepsilon'$, the linearized equation in 
    \eqref{eq:linpdeeps} admits a unique solution $\delta\in Y$ for every $h\in L^2(\Omega)$. 
    The implicit function theorem then yields the claimed differentiability.
\end{proof}

\begin{lemma}\label{lemma:regconv}
    There exists a constant $c > 0$ such that for all $u\in L^2(\Omega)$, there holds 
    \begin{equation}\label{eq:Seps}
        \|S(u) - S_\varepsilon(u)\|_Y \leq c\, \varepsilon \quad \forall\, \varepsilon > 0.
    \end{equation}
    Moreover, for every sequence $u_n \in L^2(\Omega)$ with $u_n \to u$ in $L^2(\Omega)$ 
    and every sequence $\varepsilon_n \to 0^+$, there exists a subsequence 
    $(n_k)_{k\in\N}$ and an operator $G \in \partial_{B}^{sw} S(u)$ such that
    \begin{equation*}
        S'_{\varepsilon_{n_k}}(u_{n_k})h\rightharpoonup G h 
        \text{ in } Y\quad \forall \, h \in L^2(\Omega).
    \end{equation*}
\end{lemma}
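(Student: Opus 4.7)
For the uniform $Y$-estimate \eqref{eq:Seps}, I would set $z_\varepsilon := S(u) - S_\varepsilon(u)$ and write the defining identity $-\Delta z_\varepsilon = [\mx_\varepsilon(S_\varepsilon(u)) - \max(0, S_\varepsilon(u))] + [\max(0, S_\varepsilon(u)) - \max(0, S(u))]$. Testing with $z_\varepsilon \in H^1_0(\Omega)$, the second bracket has the correct sign by monotonicity of $\max(0,\cdot)$, while the first is uniformly $O(\varepsilon)$ in $L^\infty$ by \cref{assu:maxeps}\ref{it:maxeps2}. Cauchy--Schwarz and Poincar\'e then give $\|z_\varepsilon\|_{H^1_0} = O(\varepsilon)$, and reinserting into the equation, together with the Lipschitz continuity of $\max(0,\cdot)$ and \cref{assu:maxeps}\ref{it:maxeps2}, yields the claimed bound in $Y$ exactly along the lines of \cref{prop:basic}.

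For the second assertion, set $y_n := S_{\varepsilon_n}(u_n)$. The estimate \eqref{eq:Seps} just proven, combined with the global Lipschitz continuity of $S : L^2(\Omega)\to Y$, gives $y_n \to y := S(u)$ strongly in $Y$; extracting a subsequence (not relabeled), I may assume in addition that $y_n \to y$ pointwise a.e.\ in $\Omega$. Let $\chi_n := \mx'_{\varepsilon_n}(y_n) \in L^\infty(\Omega)$, which lies in $[0,1]$ a.e.\ by \cref{assu:maxeps}\ref{it:maxeps3}. The Banach--Alaoglu theorem furnishes a further subsequence (still not relabeled) with $\chi_n \weakly^{*} \chi$ in $L^\infty(\Omega)$ for some $\chi \in L^\infty(\Omega)$ satisfying $0 \le \chi \le 1$ a.e. Crucially, these two subsequence extractions are performed once and for all, independently of the direction $h$.

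The heart of the argument is identifying $\chi$ on the sets $\{y > 0\}$ and $\{y < 0\}$. For a.e.\ $x \in \{y > 0\}$ one has $y_n(x) \geq y(x)/2 > 0$ for all sufficiently large $n$; the uniform convergence $\mx'_\varepsilon \to 1$ on $[y(x)/2, \infty)$ from \cref{assu:maxeps}\ref{it:maxeps4} then forces $\chi_n(x) \to 1$. A symmetric argument gives $\chi_n \to 0$ a.e.\ on $\{y < 0\}$. Combined with $\chi_n \weakly^{*} \chi$ and the uniform bound in $[0,1]$, this yields $\chi = 1$ a.e.\ on $\{y > 0\}$ and $\chi = 0$ a.e.\ on $\{y < 0\}$, so the characterization in \cref{th:endgame}(ii) delivers $G := G_\chi \in \partial_B^{sw} S(u)$.

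To conclude, \cref{lem:Sepsfrechet} shows that $\delta_n := S'_{\varepsilon_n}(u_n)h$ is the unique $Y$-solution of $-\Delta \delta_n + \chi_n \delta_n = h$, which fits precisely into the framework of \cref{le:PDElemma} with $j\equiv 0$, constant right-hand side $h$, and the previously fixed $\chi_n \weakly^{*} \chi$. The lemma gives $\delta_n \weakly G_\chi h$ in $Y$ for every $h\in L^2(\Omega)$, as required. The main subtlety I expect is structural rather than computational: the subsequence along which $\chi_n$ converges weakly-$*$ must be extracted \emph{before} the direction $h$ is fixed, so that the resulting $G_\chi$ serves all $h$ simultaneously. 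This is possible only because $y_n$ and $\chi_n$ are defined intrinsically from the sequences $(u_n)$ and $(\varepsilon_n)$ and do not involve $h$.
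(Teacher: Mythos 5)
Your proposal is correct and follows essentially the same route as the paper: the same energy estimate with the splitting $\mx_\varepsilon(y_\varepsilon)-\max(0,y_\varepsilon)$ plus a monotone remainder, followed by the bootstrap to $Y$, and for the second part the same extraction (independent of $h$) of a pointwise-a.e.\ and weak-$*$ convergent subsequence of $\mx'_{\varepsilon_n}(y_n)$, identification of $\chi$ via \cref{assu:maxeps}\ref{it:maxeps3}--\ref{it:maxeps4}, and conclusion via \cref{le:PDElemma} and \cref{th:endgame}. You merely spell out a few details (Poincar\'e, the pointwise identification of $\chi$ on $\{y>0\}$ and $\{y<0\}$) that the paper leaves implicit.
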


\begin{proof}
    Given  $u \in L^2(\Omega)$, let us set $y:= S(u)$ and $y_\varepsilon := S_\varepsilon(u)$. 
    Then it holds that
    \begin{equation}\label{eq:diffpdeeps}
        -\Delta ( y - y_\varepsilon) +  \max(0, y) -  \max(0, y_\varepsilon)  
        = \mx_\varepsilon(y_\varepsilon) - \max(0, y_\varepsilon).
    \end{equation}
    Testing this equation with $y - y_\varepsilon$ and employing the monotonicity of 
    the $\max$-operator and \cref{assu:maxeps}\ref{it:maxeps2} gives
    $\|y-y_\varepsilon\|_{H^1(\Omega)} \leq c\,\varepsilon$. Then, 
    thanks to the Lipschitz continuity of the $\max$-function and again 
    \cref{assu:maxeps}\ref{it:maxeps2}, a bootstrapping 
    argument completely analogous to that in the proof of Proposition \ref{prop:basic}
    yields \eqref{eq:Seps}, cf.~\eqref{eq:diffpdeeps}.

    To obtain the second part of the lemma, 
    let $(u_n) \subset L^2(\Omega)$ and $(\varepsilon_n) \subset (0, \infty)$ be sequences with 
    $u_n \to u$ in $L^2(\Omega)$ and $\varepsilon_n \to 0^+$. 
    Then \eqref{eq:Seps} and \cref{prop:basic} imply
    \begin{equation*}
        \|S_{\varepsilon_n}(u_n) - S(u)\|_{Y} \leq C  \varepsilon_n + \|S(u_n) - S(u)\|_{Y} \to 0  
    \end{equation*}
    as $n \to \infty$, i.e., $y_n := S_{\varepsilon_n}(u_n) \to y:= S(u)$ in $Y$. 
    Now, given an arbitrary but fixed direction $h \in L^2(\Omega)$, 
    we know that the derivative $\delta_n := S'_{\varepsilon_{n}}(u_{n})h$ is characterized by 
    \begin{equation*}
        - \Delta \delta_n + \mx_{\varepsilon_n}'(y_n)\delta_n = h. 
    \end{equation*}
    Then, due to $y_n \to y$ pointwise a.e. in $\Omega$ (at least for a subsequence) and 
    \cref{assu:maxeps}\ref{it:maxeps3} and \ref{it:maxeps4}, there is 
    a subsequence (not relabeled for simplicity) such that 
    \begin{equation*}
        \mx_{\varepsilon_n}'(y_n) \weakly^{*} \chi \quad \text{in } L^\infty(\Omega)
    \end{equation*}
    with 
    \begin{equation*}
        0 \leq \chi \leq 1 \text{ a.e. in }\Omega, \quad \chi = 1 \text{ a.e. in } \{y > 0\},
        \quad \text{and}  \quad \chi = 0 \text{ a.e. in } \{y < 0\}. 
    \end{equation*}
    Note that the transition to a subsequence above is independent of $h$.
    Using \cref{le:PDElemma} and \cref{th:endgame} then yields the second claim.
\end{proof}

\begin{theorem}[optimality system after passing to the limit]\label{thm:kktlimit}
    Let $\bar{u} \in L^2(\Omega)$ be locally optimal for \eqref{eq:p} with associated state 
    $\bar{y} \in Y$. Then there exist a multiplier $\chi \in L^\infty(\Omega)$ 
    and an adjoint state $p\in L^2(\Omega)$ such that 
    \begin{subequations}\label{eq:BKKT}
        \begin{align}
            &p = (G_\chi)^* \partial_y J(\bar{y}, \bar{u}), \label{eq:BKKTad}\\
            &\chi(x) \in \partial_c \max(\bar{y}(x)) \quad \text{a.e. in }\Omega, \label{eq:BKKTsubdiff}\\
            &p + \partial_u J(\bar{y}, \bar{u}) = 0, \label{eq:BKKTgradeq}
        \end{align} 
    \end{subequations}
    where $\partial_c \max : \R \rightrightarrows [0,1]$ denotes the convex subdifferential 
    of the $\max$-function. The solution operator $G_\chi$ is thus an element of $\partial^{sw}_B S(\bar{u})$.
\end{theorem}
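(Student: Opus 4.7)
The plan is to follow the classical regularization route: use the $L^2$-penalty in \eqref{eq:peps} to ensure that (possibly global) minimizers of the regularized problems exist and strongly approximate $\bar u$, derive KKT conditions for the smooth problems \eqref{eq:peps} (which is now straightforward because $S_\varepsilon$ is Fr\'echet differentiable by \cref{lem:Sepsfrechet}), and finally pass to the limit $\varepsilon\to 0^+$ using \cref{lemma:regconv} and \cref{th:endgame}.

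First I would use the standard localization trick: since $\bar u$ is only a local minimizer for \eqref{eq:p}, intersect the admissible set with a closed $L^2$-ball $\overline{B_r(\bar u)}$ on which $\bar u$ is a global minimizer. Standard direct method arguments combined with the weak continuity of $S_\varepsilon$ (\cref{lem:Sepsfrechet}) yield a global minimizer $u_\varepsilon$ of \eqref{eq:peps} over this ball. Comparing $J(S_\varepsilon(u_\varepsilon),u_\varepsilon)+\tfrac12\|u_\varepsilon-\bar u\|_{L^2}^2$ with $J(S_\varepsilon(\bar u),\bar u)$ and using the uniform estimate \eqref{eq:Seps} together with the continuity of $J$ gives $u_\varepsilon\to\bar u$ strongly in $L^2(\Omega)$; hence for $\varepsilon$ sufficiently small $u_\varepsilon$ lies in the interior of the ball and the constraint is inactive.

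Next, since $S_\varepsilon$ is Fr\'echet differentiable, the unconstrained first-order condition for \eqref{eq:peps} reads
\begin{equation*}
    S_\varepsilon'(u_\varepsilon)^* \partial_y J(y_\varepsilon, u_\varepsilon) + \partial_u J(y_\varepsilon, u_\varepsilon) + (u_\varepsilon - \bar u) = 0,
\end{equation*}
where $y_\varepsilon := S_\varepsilon(u_\varepsilon)$. Introducing the adjoint state $p_\varepsilon := S_\varepsilon'(u_\varepsilon)^* \partial_y J(y_\varepsilon, u_\varepsilon)\in L^2(\Omega)$, which by \eqref{eq:linpdeeps} is the unique weak solution of $-\Delta p_\varepsilon + \mx_\varepsilon'(y_\varepsilon)\, p_\varepsilon = \partial_y J(y_\varepsilon,u_\varepsilon)$, we obtain the regularized KKT system. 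The gradient equation then shows that $(p_\varepsilon)$ is bounded in $L^2(\Omega)$ via the continuity of $\partial_u J$.

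The remaining step is the passage to the limit. By \cref{lemma:regconv} applied to the sequence $u_\varepsilon\to\bar u$, there exist a subsequence and some $G\in\partial_B^{sw}S(\bar u)$ with $S_\varepsilon'(u_\varepsilon)h\weakly Gh$ in $Y$ for every $h\in L^2(\Omega)$; by \cref{th:endgame}(ii), $G=G_\chi$ for a unique $\chi\in L^\infty(\Omega)$ satisfying \eqref{eq:BKKTsubdiff}. Using the continuity of $\partial_y J$ together with $y_\varepsilon\to\bar y$ in $Y$ (which follows from \eqref{eq:Seps} and the continuity of $S$), and the self-adjointness argument $\langle p_\varepsilon, h\rangle = \langle \partial_y J(y_\varepsilon,u_\varepsilon), S_\varepsilon'(u_\varepsilon)h\rangle$, one passes to the limit to obtain $p_\varepsilon\weakly p := (G_\chi)^*\partial_y J(\bar y,\bar u)$ in $L^2(\Omega)$, which is \eqref{eq:BKKTad}. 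Passing to the limit in the gradient equation (using $u_\varepsilon-\bar u\to 0$ strongly and continuity of $\partial_u J$) yields \eqref{eq:BKKTgradeq}. The main obstacle is the identification of the limit of the adjoint with $(G_\chi)^*\partial_y J(\bar y,\bar u)$ and of $\chi$ with an element of $\partial_c\max(\bar y)$; both are delivered precisely by the combination of \cref{lemma:regconv} and the characterization \cref{th:endgame}(ii), which is why these earlier results are the true heart of the proof.
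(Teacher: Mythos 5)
Your proposal is correct and follows essentially the same route as the paper's proof: localize via the $L^2$-penalized problem on the ball of local optimality, show the regularized minimizers converge strongly to $\bar u$ using the uniform estimate \eqref{eq:Seps}, write the smooth first-order conditions via \cref{lem:Sepsfrechet}, and pass to the limit with \cref{lemma:regconv} and the characterization in \cref{th:endgame}. The only differences are cosmetic (the paper makes the convergence $\|\bar u_\varepsilon-\bar u\|_{L^2}\le\sqrt{4C'\varepsilon}$ quantitative and passes to the limit directly in the variational form rather than introducing $p_\varepsilon$ explicitly), so no further comment is needed.
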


\begin{proof}
    Based on the previous results, the proof follows standard arguments, which we briefly sketch for the convenience of the reader.
    We introduce the reduced objective functional associated with \eqref{eq:peps} as 
    \begin{equation*}
        F_\varepsilon : L^2(\Omega) \to \R, \qquad
        F_\varepsilon(u) := J(S_\varepsilon(u), u) + \frac{1}{2}\,\|u - \bar{u}\|_{L^2(\Omega)}^2,
    \end{equation*}
    and consider the following auxiliary optimal control problem
    \begin{equation}\tag{P$_{\varepsilon,r}$}\label{eq:pepsr}
        \left.
            \begin{aligned}
                &\min\nolimits_{u\in L^2(\Omega)} \quad F_\varepsilon(u)\\
                &\text{s.t.}   \quad \|u - \bar{u}\|_{L^2} \leq r,
            \end{aligned}
        \qquad \right\}
    \end{equation}
    where $r>0$ is the radius of local optimality of $\bar{u}$.
    Thanks to the weak continuity of $S_\varepsilon$ by \cref{lem:Sepsfrechet} and 
    the weak lower semi-continuity of $J$ by \cref{assu:standing}, the 
    direct method of the calculus of variations immediately implies the existence of a 
    global minimizer  $\bar{u}_\varepsilon \in L^2(\Omega)$ of \eqref{eq:pepsr}.
    Note that due to the continuous Fr\'echet-differentiability of $J$, 
    the global Lipschitz continuity of $S$, and \eqref{eq:Seps}, 
    there exists (after possibly reducing the radius $r$) a constant $C' > 0$ such that for all sufficiently small $\varepsilon > 0$, it holds
    \begin{equation*}
        | J(S(u),u) -  J(S_\varepsilon(u),u) | \leq C'  \varepsilon \quad \forall\, u \in L^2(\Omega) 
        \text{ with } \|u - \bar{u}\|_{L^2} \leq r.
    \end{equation*}
    As a consequence, we obtain (with the same constant)
    \begin{equation*}
        F_\varepsilon(\bar{u}) = J(S_\varepsilon(\bar{u}),\bar{u})  
        \leq C'  \varepsilon  + J(S(\bar{u}),\bar{u})
    \end{equation*}
    and 
    \begin{align*}
        F_\varepsilon(u) &= J(S_\varepsilon(u),u) +  \frac12\|u - \bar{u}\|_{L^2}^2 \\
                         &\geq  J(S(u),u) +  \frac12 \|u - \bar{u}\|_{L^2}^2 - C'  \varepsilon 
        \quad \forall \, u \in L^2(\Omega) \text{ with } \|u - \bar{u}\|_{L^2} \leq r,
    \end{align*}
    and therefore
    \begin{equation*}
        F_\varepsilon(\bar{u})  < F_\varepsilon(u) 
        \quad \forall \,u \in L^2(\Omega) 
        \text{ with } \sqrt{ 4 C'  \varepsilon} < \|u - \bar{u}\|_{L^2} \leq r.
    \end{equation*}
    Thus, for every $\varepsilon > 0$ sufficiently small, any global solution $\bar{u}_\varepsilon$ 
    of \eqref{eq:pepsr} must necessarily satisfy
    \begin{equation}\label{eq:uoptconv}
        \|\bar{u}_\varepsilon - \bar{u}\|_{L^2} \leq \sqrt{ 4 C'  \varepsilon}.
    \end{equation}
    In particular, for $\varepsilon$ small enough, $\bar{u}_\varepsilon$ 
    is in the interior of the $r$-ball around $\bar{u}$ and, as a global solution of \eqref{eq:pepsr}, 
    is also a local one of \eqref{eq:peps}. It therefore satisfies the first-order necessary 
    optimality conditions of the latter, which, thanks to the chain rule and \cref{lem:Sepsfrechet},
    read
    \begin{equation}
        \big(\partial_y J(S_\varepsilon(\bar{u}_\varepsilon), \bar{u}_\varepsilon), 
        S_\varepsilon'( \bar{u}_\varepsilon)h \big)_{Y}
        + \big( \partial_u J(S_\varepsilon(\bar{u}_\varepsilon), \bar{u}_\varepsilon), 
        h \big )_{L^2} 
        + ( \bar{u}_\varepsilon - \bar{u}  , h )_{L^2} 
        = 0\quad \forall \, h \in L^2(\Omega). \label{eq:necregsys}
    \end{equation}
    From \cref{lemma:regconv} we obtain that there exists a sequence 
    $\varepsilon_n \to 0^+$ and an operator $G \in \partial_{B}^{sw} S(\bar{u})$ such that 
    \begin{equation*}
        S'_{\varepsilon_{n}}(\bar{u}_{\varepsilon_n})h 
        \rightharpoonup G h \text{ in } Y\quad \forall \,h \in L^2(\Omega).
    \end{equation*}
    Further, we deduce from \eqref{eq:uoptconv}, the global Lipschitz continuity of $S$, 
    and \eqref{eq:Seps}, that $S_{\varepsilon_{n}}(\bar{u}_{\varepsilon_n})  \to S(\bar{u})$ 
    in $Y$. Combining all of the above and using our assumptions on $J$, we can pass to the limit 
    $\varepsilon_n \to 0$ in \eqref{eq:necregsys} to obtain 
    \begin{equation*}
        \big(\partial_y J(S (\bar{u} ), \bar{u} ), G h \big )_{Y}  
        + \big ( \partial_u J(S(\bar{u} ), \bar{u} ), h \big )_{L^2} 
        =  0 \quad   \forall\, h \in L^2(\Omega).
    \end{equation*}
    By setting $p := G^* \partial_y J(S (\bar{u} ), \bar{u} )$, this together with  
    \cref{th:endgame} and \cref{rem:subdiffmax2} finally proves the claim.
\end{proof}

\begin{corollary}\label{cor:adPDE}
    Assume that $J$ is continuously Fr\'echet-differentiable from $H^1_0(\Omega) \times L^2(\Omega)$ to $\R$.
    If $\bar{u} \in L^2(\Omega)$ is locally optimal for \eqref{eq:p} with associated state 
    $\bar{y}$, then there exists a multiplier $\chi\in L^\infty(\Omega)$ and an adjoint state 
    $p\in H^1_0(\Omega)$ such that 
    \begin{subequations}
        \begin{align}
            &-\Delta p + \chi\,p = \partial_y J(\bar{y}, \bar{u}), \label{eq:adPDE}\\
            &\chi(x) \in \partial_c \max(\bar y(x)) \quad \text{a.e. in }\Omega,\\
            &p + \partial_u J(\bar{y}, \bar{u}) = 0.
        \end{align} 
    \end{subequations}
    If $J$ is even Fr\'echet-differentiable from $L^2(\Omega) \times L^2(\Omega)$ to $\R$, then $p \in Y$.
\end{corollary}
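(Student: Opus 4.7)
The plan is to promote the abstract adjoint relation $p = (G_\chi)^* \partial_y J(\bar{y}, \bar{u})$ from \cref{thm:kktlimit} to the concrete PDE \eqref{eq:adPDE} by exploiting the extra regularity assumed on $J$. \Cref{thm:kktlimit} already provides a multiplier $\chi \in L^\infty(\Omega)$ satisfying the pointwise subdifferential inclusion, the gradient equation $p + \partial_u J(\bar{y}, \bar{u}) = 0$, and the adjoint identity with some $G_\chi \in \partial_B^{sw} S(\bar{u})$ precisely characterized in \cref{th:endgame}. What remains is to realize $p$ as the weak solution of an elliptic boundary value problem.

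First I would observe that, if $J$ is continuously Fr\'echet-differentiable from $H^1_0(\Omega) \times L^2(\Omega)$ to $\R$, then $\partial_y J(\bar{y}, \bar{u}) \in H^{-1}(\Omega)$. Since $0 \le \chi \in L^\infty(\Omega)$ by \cref{th:endgame}, the bilinear form
\begin{equation*}
    (v,w) \mapsto \int_\Omega \nabla v \cdot \nabla w\, \d x + \int_\Omega \chi\, v\, w\, \d x
\end{equation*}
is continuous and coercive on $H^1_0(\Omega)$, so Lax--Milgram yields a unique weak solution $p \in H^1_0(\Omega)$ of \eqref{eq:adPDE}.

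Next I would show that this $p$ coincides with $(G_\chi)^* \partial_y J(\bar{y}, \bar{u})$. For an arbitrary $h \in L^2(\Omega)$, setting $\eta := G_\chi h \in Y \subset H^1_0(\Omega)$ and testing the primal equation $-\Delta \eta + \chi \eta = h$ with $p$ as well as the adjoint equation with $\eta$ both reduce, by integration by parts, to the same symmetric expression
\begin{equation*}
    \int_\Omega \nabla p \cdot \nabla \eta\, \d x + \int_\Omega \chi\, p\, \eta\, \d x.
\end{equation*}
Equating the two identities gives $(p, h)_{L^2(\Omega)} = \dual{\partial_y J(\bar{y}, \bar{u})}{G_\chi h}_{H^{-1}(\Omega), H^1_0(\Omega)}$ for every $h \in L^2(\Omega)$, which identifies $p = (G_\chi)^* \partial_y J(\bar{y}, \bar{u})$. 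Combined with \cref{thm:kktlimit}, this establishes the three claimed equations.

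Finally, for the concluding regularity assertion, if $J$ is Fr\'echet-differentiable from $L^2(\Omega) \times L^2(\Omega)$ to $\R$, then $\partial_y J(\bar{y}, \bar{u}) \in L^2(\Omega)$, and a one-step bootstrap rewriting \eqref{eq:adPDE} as $-\Delta p = \partial_y J(\bar{y}, \bar{u}) - \chi\, p \in L^2(\Omega)$ puts $p$ in $Y$ by definition of this space. I do not expect any serious obstacle in this proof; the only mildly delicate point is the adjoint identification, where one must ensure that the dual pairing is meaningful, which is exactly why the $H^{-1}(\Omega)$-regularity of $\partial_y J(\bar y,\bar u)$ is invoked.
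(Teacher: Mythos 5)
Your proposal is correct and follows essentially the same route as the paper: the paper invokes the formal self-adjointness of $G_\chi$ and the standard argument from Tr\"oltzsch to obtain \eqref{eq:adPDE} with $p\in H^1_0(\Omega)$, plus bootstrapping for $p\in Y$, which is precisely what your Lax--Milgram construction, symmetric-testing identification of $p$ with $(G_\chi)^*\partial_y J(\bar y,\bar u)$, and one-step bootstrap spell out in detail.
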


\begin{proof}
    According to \cref{def:Gchi}, $G_\chi$ is the solution operator of 
    \eqref{eq:linpdechi}, which is formally self-adjoint. Thus we can argue as in 
    \cite[Sec.~4.6]{Troeltzsch} to deduce \eqref{eq:adPDE} and the $H^1$-regularity of $p$. 
    The $Y$-regularity is again obtained by bootstrapping.
\end{proof}

\subsection{Interpretation of the optimality conditions in the limit}\label{sec:Bstat}

In classical non-smooth optimization, optimality conditions of the form $0\in \partial_* f(x)$, where  $\partial_* f$ denotes one of the various subdifferentials of $f$,
frequently appear when a function $f: X \to \R$ is minimized over a normed linear space $X$;  
we only refer to \cite[Secs.~7 and 9]{schirotzek} and the references therein.
With the help of the results of \cref{sec:bouli} (in particular \cref{th:endgame}), we
are now in the position to interpret the optimality system in \eqref{eq:BKKT} in this spirit. 
To this end, we first consider the reduced objective and establish the following result 
for its Bouligand subdifferential.
\begin{proposition}[chain rule]\label{prop:chainrule}
    Let $u \in L^2(\Omega)$ be arbitrary but fixed
    and let $F : L^2(\Omega) \to \R$ be the reduced objective for \eqref{eq:p} defined by $F(u) := J(S(u), u)$. 
    Moreover, set $y:=S(u)$. Then it holds
    \begin{equation*}
        \begin{aligned}
            &\left\{ G^*\partial_y J(y, u) + \partial_u J( y, u)  :  G \in  \partial_B^{sw} S(u)\right\} \\
            &\qquad\qquad \subseteq  \partial_B F(u) := \{ w \in L^2(\Omega):\, 
            \begin{aligned}[t]
                & \text{there exists } (u_n) \subset L^2(\Omega) \text{ with } u_n \to u\text{ in } L^2(\Omega) \\
                & \text{such that } F \text{ is G\^ateaux in } u_n \text{ for all } n \in \N \\
                & \text{and }F'(u_n) \rightharpoonup w \text{ in } L^2(\Omega) \text{ as }n\to \infty\}.
            \end{aligned}    
        \end{aligned}
    \end{equation*}
\end{proposition}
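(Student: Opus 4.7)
The strategy is to read the conclusion off directly from the definition of $\partial_B^{sw} S(u)$ via a chain rule applied to the approximating sequence. Fix $G \in \partial_B^{sw} S(u)$ and set $w := G^*\partial_y J(y,u) + \partial_u J(y,u)$, which lies in $L^2(\Omega)$ because $G \in \mathcal{L}(L^2(\Omega),Y)$ yields $G^* \in \mathcal{L}(Y^*, L^2(\Omega))$. By \cref{def:bouli}, there exists a sequence $(u_n) \subset D$ with $u_n \to u$ in $L^2(\Omega)$ and $S'(u_n) h \rightharpoonup Gh$ in $Y$ for every $h \in L^2(\Omega)$. I will show that $F$ is G\^ateaux-differentiable in each $u_n$ and that, along this particular sequence, $F'(u_n) \rightharpoonup w$ in $L^2(\Omega)$, which is precisely the membership $w \in \partial_B F(u)$.

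For the first step, fix $n$ and $h \in L^2(\Omega)$. Since $u_n \in D$, \cref{cor:GateauxCrit} together with \cref{thm:rabl} shows that $S$ is (Hadamard-)G\^ateaux-differentiable at $u_n$ with $S'(u_n) \in \mathcal{L}(L^2(\Omega),Y)$. Expanding $F(u_n + th) - F(u_n) = J(S(u_n + th), u_n + th) - J(S(u_n), u_n)$ via the Fr\'echet expansion of $J$ and exploiting that $\|S(u_n + th) - S(u_n)\|_Y = O(t)$ by the global Lipschitz continuity of $S$ (\cref{prop:basic}), the remainder is $o(t)$, and dividing by $t$ and sending $t \to 0$ gives that $F$ is G\^ateaux at $u_n$ with $F'(u_n) = S'(u_n)^* \partial_y J(S(u_n), u_n) + \partial_u J(S(u_n), u_n) \in L^2(\Omega)$.

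For the second step, the Lipschitz continuity of $S$ and $u_n \to u$ in $L^2(\Omega)$ yield $S(u_n) \to y$ in $Y$; combined with the continuous Fr\'echet-differentiability of $J$, this gives $\partial_u J(S(u_n), u_n) \to \partial_u J(y,u)$ strongly in $L^2(\Omega)$ and $\partial_y J(S(u_n), u_n) \to \partial_y J(y,u)$ strongly in $Y^*$. To handle the adjoint term I test against an arbitrary $h \in L^2(\Omega)$, rewrite $\langle S'(u_n)^* \partial_y J(S(u_n), u_n), h \rangle_{L^2} = \langle \partial_y J(S(u_n), u_n), S'(u_n) h \rangle_{Y^*, Y}$, and split this pairing as $\langle \partial_y J(S(u_n), u_n) - \partial_y J(y,u), S'(u_n) h \rangle_{Y^*, Y} + \langle \partial_y J(y,u), S'(u_n) h \rangle_{Y^*, Y}$. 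The first summand vanishes since $\|S'(u_n) h\|_Y$ is uniformly bounded by the Lipschitz constant of $S$ (cf.\ \cref{lem:boulibasic}\ref{it:boulibasic3}) while $\partial_y J$ converges strongly in $Y^*$; the second converges to $\langle \partial_y J(y,u), Gh \rangle_{Y^*, Y} = \langle G^* \partial_y J(y,u), h \rangle_{L^2}$ by the defining weak convergence of $S'(u_n)h$ in $Y$. Summing both terms produces $\langle F'(u_n), h \rangle_{L^2} \to \langle w, h \rangle_{L^2}$ for every $h$, which is exactly the required weak convergence.

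The only genuine subtlety is the chain rule of the second step: at a point $u_n$ where $S$ is G\^ateaux but not Fr\'echet, G\^ateaux-differentiability of $F$ does not come for free and must be extracted from the Fr\'echet expansion of $J$ using the Lipschitz estimate on $S$ to absorb the remainder. Once this is established, the limit passage reduces to the standard ``bounded $\times$ strong $+$ fixed $\times$ weak'' decomposition sketched above.
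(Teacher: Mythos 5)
Your proposal is correct and follows essentially the same route as the paper: extract the approximating sequence of G\^ateaux points from the definition of $\partial_B^{sw} S(u)$, apply the chain rule at each $u_n$ (the paper cites the Hadamard chain rule where you spell out the Fr\'echet expansion of $J$ with the Lipschitz bound on $S$), and pass to the limit in the pairing using strong convergence of $\partial_y J(y_n,u_n)$, the uniform bound on $S'(u_n)$, and the weak convergence $S'(u_n)h \rightharpoonup Gh$ in $Y$. The extra detail you provide at the two delicate points is consistent with, and merely elaborates on, the paper's argument.
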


\begin{proof}
    Let $u \in L^2(\Omega)$ and $G \in  \partial_B^{sw} S(u)$ be arbitrary but fixed. 
    Then the definition of $\partial_B^{sw} S(u)$ guarantees the existence of a sequence 
    $u_n \in L^2(\Omega)$ of G\^ateaux points with $u_n \to u$ in $L^2(\Omega)$ 
    and $S'(u_n)h \rightharpoonup G h$ in $Y$ for all $h \in L^2(\Omega)$. 
    Since $J$ is Fr\'echet- and thus Hadamard-differentiable and so is $S$ by 
    \cref{thm:rabl}, we may employ the chain rule 
    to deduce that $F$ is G\^ateaux-differentiable in the points $u_n \in L^2(\Omega)$ with
    derivative
    \begin{equation*}
        F'(u_n) = S'(u_n)^*\partial_y J(y_n, u_n) + \partial_u J( y_n, u_n) \in L^2(\Omega)
    \end{equation*}
    for $y_n := S(u_n)$. As $y_n \to y$ in $Y$ by \cref{prop:basic} 
    and $J : Y \times L^2(\Omega) \to \R$ is continuously Fr\'echet-differentiable
    by \cref{assu:standing}, we obtain for every $h\in L^2(\Omega)$ that 
    \begin{equation*}
        \begin{aligned}
            \big( F'(u_n), h \big)_{L^2} 
            &=  \big( \partial_y J(y_n, u_n) , S'(u_n)h \big)_{Y} 
            + \big(\partial_u J( y_n, u_n) , h\big)_{L^2}\qquad\qquad \\
            &\to \big( \partial_y J(y, u) , G h\big )_{Y} +  \big(\partial_u J( y, u) , h \big)_{L^2}. 
        \end{aligned}
    \end{equation*}
    Since $h\in L^2(\Omega)$ was arbitrary, 
    this proves $G^*\partial_y J(y, u) + \partial_u J( y, u) \in \partial_B F(u)$.
\end{proof}

With the above result, we can now relate the optimality conditions obtained via regularization 
to the Bouligand subdifferential of the reduced objective, 
and in this way rate the strength of the optimality system in \eqref{eq:BKKT}.
\begin{theorem}[limit optimality system implies Bouligand-stationarity]\label{theorem:summary}
    It holds:
    \begin{gather*}
        \bar{u}   \text{ is locally optimal for }\eqref{eq:p}\\
        \Downarrow\\
        \text{there exist $\chi\in L^\infty(\Omega)$ and $p\in L^2(\Omega)$ such that \eqref{eq:BKKT} holds}\\
        \Downarrow\\
        \bar{u} \text{ is Bouligand-stationary for }\eqref{eq:p} 
        \text{ in the sense that } 0 \in \partial_B F(u)\\
        \Downarrow\\
        \bar{u} \text{ is Clarke-stationary for }\eqref{eq:p} 
        \text{ in the sense that } 0 \in \partial_C F(u)
    \end{gather*}
    Here, $\partial_C F(u)$ denotes the Clarke subdifferential as defined in \cite[Sec.~2.1]{Clarke:1990}.
\end{theorem}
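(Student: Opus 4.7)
The plan is to verify the three implications in turn; since the excerpt has done essentially all of the heavy lifting, each step reduces to combining results already at our disposal.

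The first implication is just a restatement of Theorem \ref{thm:kktlimit}, so nothing further is needed.

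For the second implication, suppose \eqref{eq:BKKT} holds for some $\chi \in L^\infty(\Omega)$ and $p \in L^2(\Omega)$. The sign condition \eqref{eq:BKKTsubdiff} says precisely that $\chi \in [0,1]$ a.e.\ with $\chi = 1$ on $\{\bar y > 0\}$ and $\chi = 0$ on $\{\bar y < 0\}$, which by Theorem \ref{th:endgame}(ii) is equivalent to $G_\chi \in \partial_B^{sw} S(\bar u)$. The chain rule of Proposition \ref{prop:chainrule} then yields
\begin{equation*}
G_\chi^* \partial_y J(\bar y, \bar u) + \partial_u J(\bar y, \bar u) \;\in\; \partial_B F(\bar u).
\end{equation*}
Using \eqref{eq:BKKTad} to substitute $p$ for $G_\chi^* \partial_y J(\bar y, \bar u)$ and then invoking \eqref{eq:BKKTgradeq} shows that this element equals $0$, and hence $0 \in \partial_B F(\bar u)$.

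The third implication is the step where the topologies have to be handled carefully, and is really the only potentially tricky point. Since $S : L^2(\Omega) \to Y \embed L^2(\Omega)$ is globally Lipschitz by Proposition \ref{prop:basic} and $J$ is continuously Fr\'echet-differentiable by Assumption \ref{assu:standing}, the reduced functional $F$ is locally Lipschitz on $L^2(\Omega)$. By the generalization of Rademacher's theorem to separable Hilbert spaces (cf.\ the remark following Definition \ref{def:bouli}) together with the standard representation of the Clarke subdifferential of a locally Lipschitz function on a Hilbert space as the weakly closed convex hull of limits of G\^ateaux derivatives at nearby smooth points (see \cite[Sec.~2.5]{Clarke:1990} and its Hilbert-space adaptations), we have $\partial_B F(\bar u) \subseteq \partial_C F(\bar u)$. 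Consequently $0 \in \partial_B F(\bar u)$ implies $0 \in \partial_C F(\bar u)$.

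The main obstacle, if any, is citational rather than technical: for the last implication one must appeal to the fact that in a separable Hilbert space the Clarke subdifferential of a locally Lipschitz function contains all weak limits of G\^ateaux derivatives at differentiability points. Once this is in hand, both nontrivial implications collapse to direct applications of Theorem \ref{th:endgame}, Proposition \ref{prop:chainrule}, and Theorem \ref{thm:kktlimit}.
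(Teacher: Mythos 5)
Your proposal is correct, and for the first two implications it coincides with the paper's argument: the first is \cref{thm:kktlimit} verbatim, and the second combines the fact that \eqref{eq:BKKTsubdiff} makes $G_\chi$ an element of $\partial_B^{sw}S(\bar u)$ (via \cref{th:endgame}, already recorded in \cref{thm:kktlimit}) with the chain rule of \cref{prop:chainrule} and the relations \eqref{eq:BKKTad}, \eqref{eq:BKKTgradeq}. The only divergence is in the third implication. The paper obtains the inclusion $\partial_B F(\bar u)\subseteq \partial_C F(\bar u)$ from two elementary facts valid in arbitrary Banach spaces: the (weak-$*$) closedness of the graph of the Clarke subdifferential \cite[Prop.~2.1.5(b)]{Clarke:1990} and the fact that at every G\^ateaux point the derivative belongs to the Clarke subdifferential \cite[Prop.~2.2.2]{Clarke:1990}; given $w\in\partial_B F(\bar u)$ with $F'(u_n)\weakly w$ and $u_n\to\bar u$, these two facts immediately give $w\in\partial_C F(\bar u)$. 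You instead invoke the full ``gradient formula'' representing $\partial_C F(\bar u)$ as the weakly closed convex hull of limiting G\^ateaux derivatives, citing \cite[Sec.~2.5]{Clarke:1990} and Hilbert-space adaptations. Note that Clarke's Section~2.5 is purely finite-dimensional, and the equality version of this formula in infinite dimensions is a genuinely delicate matter; moreover, the Rademacher-type theorem you mention is only relevant for non-emptiness of $\partial_B F$, which is not needed here. Since your argument only uses the easy inclusion (that $\partial_C F(\bar u)$ \emph{contains} all such weak limits), it is sound once you replace the appeal to the representation theorem by the two elementary facts above, which is exactly what the paper does; your explicit observation that $F$ is locally Lipschitz (so that $\partial_C F$ is well defined) is a useful point that the paper leaves implicit.
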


\begin{proof}
    The first two implications immediately follow from \cref{thm:kktlimit} and 
    \cref{prop:chainrule}. For the third implication, observe that  
    the weak closedness of $\partial_C F(u)$ (see \cite[Prop.~2.1.5b]{Clarke:1990}) and $F'(u) \in \partial_C F(u)$ 
    in all G\^ateaux points (cf.~\cite[Prop.~2.2.2]{Clarke:1990}) result in 
    $\partial_B F(u) \subseteq \partial_C F(u)$. 
\end{proof}

\begin{remark}
    The above theorem is remarkable for several reasons:
    \begin{itemize}
        \item[(i)] \cref{theorem:summary} shows that  $0 \in \partial_B F(u)$ 
            is a necessary optimality condition for the optimal control problem \eqref{eq:p}. 
            This is in general not true even in finite dimensions, as the minimization 
            of the absolute value function shows.
        \item[(ii)] The above shows that the necessary optimality condition in 
            \cref{thm:kktlimit}, which is obtained by regularization, is comparatively strong. 
            It is stronger than Clarke-stationarity and even stronger than Bouligand-stationarity
            (which is so strong that it does not even make sense in the majority of problems).
    \end{itemize}
\end{remark}

\begin{remark}\label{rem:controlconstr}
    It is easily seen that the limit analysis in \cref{sec:limit} readily carries 
    over to control constrained problems involving an additional constraint of the form  
    $u\in U_{\textup{ad}}$ for a closed and convex $U_{\textup{ad}}\subset L^2(\Omega)$. 
    The optimality system arising in this way is identical to \eqref{eq:BKKT} except 
    for \eqref{eq:BKKTgradeq}, which is replaced by the variational inequality
    \begin{equation*}
        (p + \partial_u J(\bar{y}, \bar{u}), u - \bar{u}) \geq 0 
        \quad \forall\,u \in U_{\textup{ad}}.
    \end{equation*}
    The interpretation of the optimality system arising in this way in the spirit 
    of \cref{theorem:summary} is, however, all but straightforward, as 
    it is not even clear how to define the Bouligand subdifferential of the reduced objective 
    in the presence of control constraints. Intuitively, one would choose the 
    approximating sequences in the definition of $\partial_B F$ from the feasible set
    $U_{\textup{ad}}$, but then the arising subdifferential could well be empty. 
    This gives rise to future research.
\end{remark}

\subsection{Strong stationarity}

Although comparatively strong, the optimality conditions in \cref{thm:kktlimit} are not the most rigorous 
ones, as we will see in the sequel. To this end, we apply a method of proof which was developed in \cite{ms16} 
for optimal control problems governed by non-smooth semilinear parabolic PDEs and inspired by the analysis in \cite{m76, mp84}.
We begin with an optimality condition without dual variables.
\begin{proposition}[purely primal optimality conditions]\label{prop:primal}
    Let $\bar{u} \in L^2(\Omega)$ be locally optimal for \eqref{eq:p} 
    with associated state $\bar{y} = S(\bar{u}) \in Y$. Then there holds
    \begin{equation}\label{eq:VI}
        F'(\bar{u}; h) = \partial_y J(\bar{y}, \bar{u})S'(\bar{u};h) 
        + \partial_u J(\bar{y}, \bar{u})h \geq 0  \quad \forall\, h\in L^2(\Omega).
    \end{equation}
\end{proposition}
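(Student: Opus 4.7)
The plan is to combine the local optimality of $\bar u$ with the Hadamard directional differentiability of the control-to-state map $S$ provided by \cref{thm:rabl} via the chain rule. There is no deep new idea required; the preceding results do all the work.

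First I fix an arbitrary direction $h \in L^2(\Omega)$. Since $\bar u$ is a local minimizer in $L^2(\Omega)$, there is some $t_0>0$ such that for every $t\in(0,t_0)$ one has
\begin{equation*}
    F(\bar u + t h) - F(\bar u) = J(S(\bar u + t h), \bar u + t h) - J(S(\bar u), \bar u) \geq 0.
\end{equation*}
Dividing by $t>0$ and passing to the limit $t\to 0^+$ will produce the desired inequality once the limit of the difference quotient is identified as $\partial_y J(\bar y,\bar u)S'(\bar u;h) + \partial_u J(\bar y,\bar u)h$.

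To identify this limit, I would apply the chain rule for Hadamard directional derivatives. The outer map $J : Y \times L^2(\Omega) \to \R$ is continuously Fréchet-differentiable by \cref{assu:standing}, so in particular Hadamard-differentiable. The inner map $u \mapsto (S(u), u)$ is Hadamard directionally differentiable from $L^2(\Omega)$ to $Y\times L^2(\Omega)$, since $S$ is Hadamard directionally differentiable at $\bar u$ in every direction by \cref{thm:rabl} and the identity component is of course linear. Concretely, for any sequences $t_n\to 0^+$ and $h_n\to h$ in $L^2(\Omega)$ one has by \cref{thm:rabl} that
\begin{equation*}
    \frac{S(\bar u + t_n h_n) - S(\bar u)}{t_n} \to S'(\bar u;h) \quad \text{in } Y.
\end{equation*}
Composing with $J$ and using its continuous Fréchet-differentiability at $(\bar y,\bar u)$ yields Hadamard directional differentiability of $F$ at $\bar u$ with
\begin{equation*}
    F'(\bar u;h) = \partial_y J(\bar y,\bar u)S'(\bar u;h) + \partial_u J(\bar y,\bar u)h.
\end{equation*}

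Choosing $h_n \equiv h$ and any null sequence $t_n\to 0^+$, the nonnegativity of the difference quotient is preserved in the limit, which gives $F'(\bar u;h)\geq 0$. Since $h\in L^2(\Omega)$ was arbitrary, this establishes \eqref{eq:VI}. The only step meriting care is the application of the chain rule in the Hadamard sense, which works precisely because \cref{thm:rabl} delivers strong convergence of the directional difference quotients in $Y$ under strong convergence of directions in $L^2(\Omega)$ — exactly what is needed to pass to the limit inside the Fréchet-differentiable $J$.
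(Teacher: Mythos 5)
Your argument is correct and is essentially the paper's own proof: both rely on the Hadamard directional differentiability of $S$ from \cref{thm:rabl}, the (continuous) Fr\'echet-differentiability of $J$, and the chain rule for Hadamard-differentiable mappings to obtain $F'(\bar u;h) = \partial_y J(\bar y,\bar u)S'(\bar u;h) + \partial_u J(\bar y,\bar u)h$, after which local optimality yields the sign condition by passing to the limit in the nonnegative difference quotients. You merely spell out the standard limiting argument that the paper compresses into ``by classical arguments.''
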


\begin{proof}
    As already argued above, $J: Y \times L^2(\Omega) \to \R$ and $S: L^2(\Omega) \to Y$ 
    are Hadamard-differentiable, so that the reduced objective $F: L^2(\Omega) \ni u \mapsto J(S(u), u) \in \R$ 
    is Hadamard-differentiable with directional derivative 
    $F'(u;h) = \partial_y J(S(u),u)S'(u;h) + \partial_u J(S(u),u) h$ by the chain rule for Hadamard-differentiable mappings. 
    Thus by classical arguments, the local optimality of $\bar{u}$ implies $F'(\bar u; h) \geq 0$ for all $h\in L^2(\Omega)$.
\end{proof}

\begin{lemma}\label{lem:adjoint}
    Let $p\in L^2(\Omega)$ fulfill \eqref{eq:BKKTad}, i.e., $p = (G_\chi)^*\partial_y J(\bar{y}, \bar{u})$ 
    with some $\chi \in L^\infty(\Omega)$, $\chi \geq 0$. Then for every $v\in Y$ there holds
    \begin{equation}\label{eq:pdefoo2}
        (-\Delta v + \chi v , p)_{L^2(\Omega)}
        = \dual{\partial_y J(\bar{y}, \bar{u})}{v}_{Y', Y}.
    \end{equation} 
\end{lemma}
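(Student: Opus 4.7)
The plan is to read off the identity directly from the definition of $G_\chi$ and the adjoint relationship. Since $v \in Y$, by definition $\Delta v \in L^2(\Omega)$, and as $\chi \in L^\infty(\Omega)$ we also have $\chi v \in L^2(\Omega)$. Hence the expression $h := -\Delta v + \chi v$ is a well-defined element of $L^2(\Omega)$, and it makes sense to apply $G_\chi$ to it.

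The crucial observation is that $v$ itself solves the defining PDE \eqref{eq:linpdechi} for $G_\chi$ with right-hand side $h = -\Delta v + \chi v$. Since $\chi \geq 0$ guarantees uniqueness of solutions in $Y$ (by the standard Lax–Milgram / monotonicity argument underlying \cref{def:Gchi}), we obtain the tautology
\begin{equation*}
    G_\chi\bigl(-\Delta v + \chi v\bigr) = v.
\end{equation*}

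With this at hand, the identity follows from the definition of the adjoint. Indeed, since $G_\chi \in \mathcal{L}(L^2(\Omega), Y)$ and $\partial_y J(\bar y, \bar u) \in Y'$, the operator $(G_\chi)^* : Y' \to L^2(\Omega)$ satisfies by definition
\begin{equation*}
    \bigl(h, (G_\chi)^* \partial_y J(\bar{y},\bar{u})\bigr)_{L^2(\Omega)}
    = \dual{\partial_y J(\bar{y},\bar{u})}{G_\chi h}_{Y',Y}
    \quad \forall\, h \in L^2(\Omega).
\end{equation*}
Plugging in $h = -\Delta v + \chi v$, using $p = (G_\chi)^* \partial_y J(\bar{y},\bar{u})$ on the left and $G_\chi h = v$ on the right, yields exactly \eqref{eq:pdefoo2}.

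There is essentially no obstacle here; the only thing to double-check is that $-\Delta v + \chi v$ really lies in $L^2(\Omega)$ so that it is an admissible argument for $G_\chi$, and that uniqueness in \cref{def:Gchi} applies with $\chi \geq 0$ (not just $\chi \in \{0,1\}$ or the other structural conditions used earlier). Both are immediate from the definition of $Y$ and a standard monotonicity/coercivity argument, so the proof is a two-line verification.
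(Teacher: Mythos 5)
Your proof is correct and follows exactly the paper's argument: define $g := -\Delta v + \chi v \in L^2(\Omega)$, note $v = G_\chi g$ by uniqueness of solutions to \eqref{eq:linpdechi}, and apply the definition of the adjoint. The extra checks you mention (that $g$ lies in $L^2(\Omega)$ and that $\chi \geq 0$ suffices for uniqueness) are fine and only make the same two-line verification slightly more explicit.
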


\begin{proof}
    Let $v\in Y$ be arbitrary and define $g\in L^2(\Omega)$ by $g := -\Delta v + \chi\, v$ so that $v = G_\chi g$.
    Then $p = (G_\chi)^* \partial_yJ(\bar{y}, \bar{u})$ implies
    \begin{equation*}
        (-\Delta v + \chi v , p)_{L^2(\Omega)}
        = (g,p)_{L^2(\Omega)} = \dual{\partial_y J(\bar{y}, \bar{u})}{G_\chi g}_{Y',Y} 
        = \dual{\partial_y J(\bar{y}, \bar{u})}{v}_{Y', Y}
    \end{equation*}
    as claimed.
\end{proof}

\begin{theorem}[strong stationarity]\label{thm:strongstat}
    Let $\bar{u} \in L^2(\Omega)$ be locally optimal for \eqref{eq:p} with associated state 
    $\bar{y} \in Y$. Then there exist a multiplier $\chi \in L^\infty(\Omega)$ 
    and an adjoint state $p\in L^2(\Omega)$ such that 
    \begin{subequations}\label{eq:SKKT}
        \begin{align}
            &p = (G_\chi)^* \partial_y J(\bar{y}, \bar{u}), \label{eq:SKKTad}\\
            &\chi(x) \in \partial_c \max(\bar y(x)) \quad \text{a.e. in }\Omega, \label{eq:SKKTsubdiff}\\
            &p(x) \leq 0 \quad\text{a.e. in } \{\bar y=0\}, \label{eq:SKKTpsign}\\
            &p + \partial_u J(\bar{y}, \bar{u}) = 0. \label{eq:SKKTgradeq}
        \end{align} 
    \end{subequations}
\end{theorem}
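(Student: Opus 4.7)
The plan is to combine the purely primal optimality condition of \cref{prop:primal} with the explicit form of $S'(\bar u;\cdot)$ from \cref{thm:rabl}, and to impose (d) from the outset by setting $p := -\partial_u J(\bar y,\bar u)$; the sign condition (c) will then emerge directly from the variational inequality, while a Riesz-type representation on $\{\bar y = 0\}$ will furnish the multiplier $\chi$ realizing (a) and (b).

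First I reparametrize the VI over $Y$: for each $v\in Y$, the right-hand side $h_v := -\Delta v + \mathbb{1}_{\{\bar y>0\}}\, v + \mathbb{1}_{\{\bar y=0\}}\max(0,v) \in L^2(\Omega)$ produces $S'(\bar u;h_v) = v$ by \cref{thm:rabl}, so $F'(\bar u;h_v)\geq 0$ becomes
\begin{equation*}
\dual{\partial_y J(\bar y,\bar u)}{v}_{Y',Y}
+ \bigl(\partial_u J(\bar y,\bar u),\, -\Delta v + \mathbb{1}_{\{\bar y>0\}} v\bigr)_{L^2}
+ \bigl(\partial_u J(\bar y,\bar u),\, \mathbb{1}_{\{\bar y=0\}}\max(0,v)\bigr)_{L^2} \geq 0
\end{equation*}
for every $v\in Y$. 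Adding this inequality to its counterpart for $-v$ yields $(\partial_u J(\bar y,\bar u),\mathbb{1}_{\{\bar y=0\}}|v|)_{L^2}\geq 0$ for all $v\in Y$, and since the restrictions of $C_c^\infty(\Omega)\subset Y$ to $\{\bar y=0\}$ are dense in the nonnegative cone of $L^2(\{\bar y=0\})$, this forces $\partial_u J(\bar y,\bar u)\geq 0$ a.e.\ on $\{\bar y=0\}$. Setting $p := -\partial_u J(\bar y,\bar u)$ therefore secures (c) and (d) at once.

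For (a) and (b) I make the ansatz $\chi := \mathbb{1}_{\{\bar y>0\}} + \tilde\chi\,\mathbb{1}_{\{\bar y=0\}}$ with $\tilde\chi\in[0,1]$ still to be determined. Using \cref{lem:adjoint} one checks that the identity $(G_\chi)^*\partial_y J(\bar y,\bar u)=p$ reduces to the scalar representation
\begin{equation*}
T(v) := -\dual{\partial_y J(\bar y,\bar u)}{v}_{Y',Y} - \bigl(\partial_u J(\bar y,\bar u),\,-\Delta v+\mathbb{1}_{\{\bar y>0\}}v\bigr)_{L^2} = \int_{\{\bar y=0\}} \tilde\chi\, v\, \partial_u J(\bar y,\bar u)\,\d x
\end{equation*}
for every $v\in Y$. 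Evaluating the VI at $v$ and at $-v$ separately gives the two-sided bound $|T(v)|\leq \int_{\{\bar y=0\}}|v|\,\partial_u J(\bar y,\bar u)\,\d x$, and restricting to the subspace $V := \{v\in Y : v=0 \text{ on } \{\bar y=0\}\}$ (where $\pm v$ are simultaneously admissible) shows that $T$ vanishes on $V$ and consequently depends only on the trace $v|_{\{\bar y=0\}}$.

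The decisive step is the representation itself: $T$ extends by density to a continuous linear functional on $L^1(\{\bar y=0\},\partial_u J(\bar y,\bar u)\,\d x)$, so the Riesz representation theorem yields the required $\tilde\chi\in L^\infty$ with $\|\tilde\chi\|_\infty\leq 1$, and the one-sided sign $T(v)\geq 0$ for $v\geq 0$ upgrades this to $\tilde\chi\geq 0$. Possible degeneracy of the measure $\partial_u J(\bar y,\bar u)\,\d x$ on $\{\bar y=0\}$ is harmless, since $\tilde\chi$ is free on $\{\partial_u J(\bar y,\bar u)=0\}\cap\{\bar y=0\}$ and can be set to zero there; unwinding the definition of $T$ then verifies that this $\chi$ satisfies the adjoint identity required for (a).
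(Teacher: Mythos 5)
Your proposal is correct, but it takes a genuinely different route from the paper's proof of this theorem. The paper obtains \eqref{eq:SKKTad}, \eqref{eq:SKKTsubdiff}, and \eqref{eq:SKKTgradeq} wholesale from the regularization limit (\cref{thm:kktlimit}) and then uses the primal condition \eqref{eq:VI} together with \cref{lem:adjoint} only to extract the additional sign condition \eqref{eq:SKKTpsign}; you instead build the entire system directly from \cref{prop:primal} and \cref{thm:rabl}, defining $p:=-\partial_u J(\bar y,\bar u)$, reading off \eqref{eq:SKKTpsign} from the symmetrized inequality, and then manufacturing $\chi$ on $\{\bar y=0\}$ by a Riesz representation in the weighted space $L^1(\{\bar y=0\},\partial_u J(\bar y,\bar u)\,\d x)$. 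This is in spirit the implication ``\eqref{eq:VI} $\Rightarrow$ \eqref{eq:SKKT}'' that the paper defers to \cref{prop:equiv}, where it is proved via surjectivity of $S'(\bar u;\cdot)$, a Hahn--Banach extension of $\Delta p+\partial_y J(\bar y,\bar u)$ from $Y'$ to $L^2(\Omega)$, and pointwise inspection with $\chi=\mu/p$. What each approach buys: the paper's argument recycles the already-established limit analysis and makes visible that strong stationarity strengthens \eqref{eq:BKKT} exactly by the sign condition; yours is self-contained (no regularization needed) and the weighted representation delivers $\tilde\chi\in[0,1]$ and the degenerate set $\{p=0\}\cap\{\bar y=0\}$ (where you set $\tilde\chi=0$) in one stroke. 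Two points deserve an explicit line: (i) transferring positivity and the bound $|T(v)|\le\int_{\{\bar y=0\}}|v|\,\partial_u J\,\d x$ through the density extension requires nonnegative approximants in the subspace of restrictions -- this is fine because mollification of nonnegative truncations stays nonnegative and converges in $L^2(\Omega)$, hence (by Cauchy--Schwarz with the $L^2$ density $\partial_u J$) in $L^1(\mu)$; equivalently, one can pass the inequality $T(v)\le\int_{\{\bar y=0\}}\max(0,v)\,\d\mu$ to the limit and test with $\pm\mathbb{1}_A$; (ii) concluding \eqref{eq:SKKTad} from the identity $(-\Delta v+\chi v,p)_{L^2}=\dual{\partial_y J(\bar y,\bar u)}{v}_{Y',Y}$ for all $v\in Y$ uses that $G_\chi$ maps $L^2(\Omega)$ onto $Y$, i.e., the converse direction of \cref{lem:adjoint}, which is immediate but should be stated.
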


\begin{proof}
    From \cref{thm:kktlimit}, we know that there exist $p\in L^2(\Omega)$ and $\chi\in L^\infty(\Omega)$ 
    such that \eqref{eq:BKKT} is valid, which already gives \eqref{eq:SKKTad}, \eqref{eq:SKKTsubdiff}, and \eqref{eq:SKKTgradeq}. 
    It remains to show \eqref{eq:SKKTpsign}. To this end, let $v\in Y$ be arbitrary and define 
    \begin{equation}\label{eq:rablpde}
        h := -\Delta v + \mathbb{1}_{\{\bar{y}=0\}} \max(0,v) + \mathbb{1}_{\{\bar{y}>0\}} v 
        = -\Delta v + \mx'(\bar{y};v) \in L^2(\Omega)
    \end{equation}
    so that $v = S'(\bar{u};h)$ by \cref{thm:rabl}. 
    By testing \eqref{eq:rablpde} with $p$ and using \eqref{eq:BKKTgradeq}
    and \eqref{eq:VI} from \cref{prop:primal}, we arrive at
    \begin{equation}\label{eq:pdefoo1}
        \begin{aligned}[b]
            (-\Delta v + \mx'(\bar{y};v) ,  p)_{L^2(\Omega)}
            & = (h, p)_{L^2(\Omega)} \\
            & = (-\partial_u J(\bar{y}, \bar{u}), h)_{L^2(\Omega)}\\
            & \leq \dual{\partial_y J(\bar{y}, \bar{u})}{S'(\bar{u};h)}_{Y', Y} 
            = \dual{\partial_y J(\bar{y}, \bar{u})}{v}_{Y', Y}.
        \end{aligned}
    \end{equation}
    On the other hand, we know from \cref{lem:adjoint} that $p$ and $\chi$ satisfy \eqref{eq:pdefoo2}.
    Subtracting this equation from \eqref{eq:pdefoo1} and using the density of $Y \embed L^2(\Omega)$ 
    and the global Lipschitz continuity of $L^2(\Omega) \ni v \mapsto \mx'(\bar{y};v) \in L^2(\Omega)$ yields
    \begin{equation}\label{eq:L2ineq}
        \int_\Omega \big( \mx'(\bar{y}; v) - \chi\, v \big) p\,\d x \leq 0 
        \quad \forall\, v \in L^2(\Omega).
    \end{equation}  
    Note that due to \eqref{eq:SKKTsubdiff}, the bracket in \eqref{eq:L2ineq} vanishes a.e.\ in $\{\bar{y}\neq 0\}$. 
    Thus, we obtain 
    \begin{equation*}
        \int_{\{\bar y = 0\}} \big( \max(0, v) - \chi\, v \big) p\,\d x = \int_{\{\bar y = 0\}} \Big (  ( 1 - \chi\,)\max(0, v) + \chi \max(0, -v) \Big )  p\,\d x \leq 0 
    \end{equation*}
    for all $v \in L^2(\Omega)$. The above implies that $ ( 1 - \chi\,)v p \leq 0$ and $\chi v p \leq 0$ holds a.e.\ in $\{\bar y = 0\}$ for all $0 \leq v \in L^2(\Omega)$, and this in turn yields by addition that $v p \leq 0$ holds a.e.\ in $\{\bar y = 0\}$ for all $0 \leq v \in L^2(\Omega)$. Inequality \eqref{eq:SKKTpsign} now follows immediately. 
\end{proof}

\begin{proposition}\label{prop:equiv}
    The strong stationarity conditions are equivalent to the purely primal 
    optimality conditions, i.e., $\bar{u}\in L^2(\Omega)$ together with its state 
    $\bar{y}$ and a multiplier $\chi$ and an adjoint state $p$ satisfies \eqref{eq:SKKT} if and only if 
    they also fulfill the variational inequality \eqref{eq:VI}.
\end{proposition}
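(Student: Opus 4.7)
The plan is to handle the two directions separately.

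For \eqref{eq:SKKT} $\Rightarrow$ \eqref{eq:VI}, I would use a direct computation. By \cref{thm:rabl} and the chain rule for Hadamard-differentiable maps, $F'(\bar{u};h) = \dual{\partial_y J(\bar{y},\bar{u})}{\delta_h}_{Y',Y} + (\partial_u J(\bar{y},\bar{u}), h)_{L^2}$, where $\delta_h := S'(\bar{u};h) \in Y$ solves $-\Delta\delta_h + \mx'(\bar{y};\delta_h) = h$. Applying \cref{lem:adjoint} with $v = \delta_h$ rewrites the first term as $(-\Delta\delta_h + \chi\delta_h, p)_{L^2}$; substituting the PDE for $\delta_h$, eliminating the resulting $(h,p)_{L^2}$ terms via \eqref{eq:SKKTgradeq}, and using \eqref{eq:SKKTsubdiff} to cancel all contributions on $\{\bar{y}\neq 0\}$, I obtain, after decomposing $\delta_h = \max(0,\delta_h) - \max(0,-\delta_h)$,
\begin{equation*}
    F'(\bar{u};h) = \int_{\{\bar{y}=0\}} \bigl[(\chi-1)\max(0,\delta_h) - \chi\max(0,-\delta_h)\bigr]\, p\, \d x.
\end{equation*}
The pointwise sign conditions $\chi \in [0,1]$, $\max(0,\pm\delta_h) \geq 0$, and crucially $p \leq 0$ on $\{\bar{y}=0\}$ from \eqref{eq:SKKTpsign} render each summand non-negative, yielding $F'(\bar{u};h) \geq 0$.

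For the converse \eqref{eq:VI} $\Rightarrow$ \eqref{eq:SKKT}, I would construct $\chi$ and $p$ directly from the variational inequality. Defining $p := -\partial_u J(\bar{y},\bar{u}) \in L^2(\Omega)$ gives \eqref{eq:SKKTgradeq} by construction. By \cref{thm:rabl}, any $v \in Y$ satisfies $v = \delta_h$ for $h := -\Delta v + \mx'(\bar{y};v) \in L^2(\Omega)$; explicitly, $h = -\Delta v + \mathbb{1}_{\{\bar{y}\geq 0\}} v$ when $v \geq 0$ a.e.\ on $\{\bar{y}=0\}$, and $h = -\Delta v + \mathbb{1}_{\{\bar{y}>0\}} v$ when $v \leq 0$ a.e.\ on $\{\bar{y}=0\}$. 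Feeding these into \eqref{eq:VI} and also applying \eqref{eq:VI} to the direction generating $-v$ yields, for $v \geq 0$ on $\{\bar{y}=0\}$, the two-sided bracket
\begin{equation*}
    (p, -\Delta v + \mathbb{1}_{\{\bar{y}\geq 0\}} v)_{L^2} \leq \dual{\partial_y J(\bar{y},\bar{u})}{v}_{Y',Y} \leq (p, -\Delta v + \mathbb{1}_{\{\bar{y}>0\}} v)_{L^2}
\end{equation*}
and a symmetric one for $v \leq 0$ on $\{\bar{y}=0\}$. Subtracting the two sides of the bracket gives $\int_{\{\bar{y}=0\}} pv\,\d x \leq 0$ for all such $v$; testing with non-negative $v \in C_c^\infty(\Omega)$ and invoking density produces $p \leq 0$ a.e.\ on $\{\bar{y}=0\}$, which is \eqref{eq:SKKTpsign}.

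The main obstacle will be constructing the measurable selection $\chi$ satisfying both \eqref{eq:SKKTsubdiff} and the adjoint relation \eqref{eq:SKKTad}. Writing $\chi = \mathbb{1}_{\{\bar{y}>0\}} + \mathbb{1}_{\{\bar{y}=0\}}\tilde\chi$, what remains is to produce a measurable $\tilde\chi \in [0,1]$ with
\begin{equation*}
    \dual{\partial_y J(\bar{y},\bar{u})}{v}_{Y',Y} - (p, -\Delta v + \mathbb{1}_{\{\bar{y}>0\}} v)_{L^2} = \int_{\{\bar{y}=0\}} \tilde\chi\, p\, v\, \d x \quad \forall\, v \in Y.
\end{equation*}
The brackets derived above show that the left-hand side is a linear functional of $v \in Y$ which vanishes on $\{v\in Y : v|_{\{\bar{y}=0\}}=0\}$ and whose sign along $v|_{\{\bar{y}=0\}}$ is compatible with $\tilde\chi p$ taking values in $[p,0]$ on $\{\bar{y}=0\}$ (using $p \leq 0$ there). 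Extracting a measurable $\tilde\chi$ with the required $L^\infty$-bound from these compatibility conditions is the technical heart of the converse and mirrors the representation arguments for obstacle- and parabolic-type problems in \cite{mp84, ms16}.
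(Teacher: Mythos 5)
Your first direction (\eqref{eq:SKKT} $\Rightarrow$ \eqref{eq:VI}) is correct and is essentially the paper's own computation: test the PDE for $\delta_h=S'(\bar u;h)$ with $p$, split $\mx'(\bar y;\delta_h)-\chi\,\delta_h$ on $\{\bar y=0\}$, and use $0\le\chi\le1$ together with \eqref{eq:SKKTpsign} and \cref{lem:adjoint}.

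The converse, however, has a genuine gap: you never construct the multiplier $\chi$, i.e.\ you do not verify \eqref{eq:SKKTad} and \eqref{eq:SKKTsubdiff}, and you explicitly defer this to ``representation arguments'' in the literature. This is not a routine technicality that can be outsourced --- it is the main content of the implication, and your one-sided brackets by themselves only give inequalities, whereas \eqref{eq:SKKTad} is an exact identity $-\Delta p+\chi p=\partial_y J(\bar y,\bar u)$ (in $Y'$) with the pointwise constraint $\chi\in[0,1]$. The paper closes exactly this gap as follows: with $p:=-\partial_u J(\bar y,\bar u)$, set $\lambda:=\Delta p+\partial_y J(\bar y,\bar u)\in Y'$; using \eqref{eq:VI} and the surjectivity of $S'(\bar u;\cdot):L^2(\Omega)\to Y$ (apply \eqref{eq:VI} to the direction $h$ generating $-\eta$) one obtains the domination $\dual{\lambda}{\eta}\le\int_\Omega\mathbb{1}_{\{\bar y>0\}}\eta\,p+\mathbb{1}_{\{\bar y=0\}}\min(0,\eta)\,p\,\d x$ for all $\eta\in Y$; since the majorant is sublinear and $L^2$-continuous, a Hahn--Banach extension yields $\mu\in L^2(\Omega)$ with $\dual{\lambda}{\eta}=\int_\Omega\mu\,\eta\,\d x$ on $Y$, and pointwise inspection of the domination gives $\mu=p$ a.e.\ on $\{\bar y>0\}$, $\mu=0$ a.e.\ on $\{\bar y<0\}$, and $p\le\mu\le0$ a.e.\ on $\{\bar y=0\}$; finally $\chi:=\mu/p$ on $\{\bar y=0\}\cap\{p\neq0\}$ (and $\chi=1$ on $\{\bar y>0\}$, $\chi=0$ elsewhere) delivers \eqref{eq:SKKTad}--\eqref{eq:SKKTsubdiff}, and \eqref{eq:SKKTpsign} falls out of $p\le\mu\le0$. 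Your separate derivation of the sign condition via the two-sided bracket and nonnegative test functions is correct, but without the extension/representation step above it does not complete the proof; if you want to avoid Hahn--Banach, you would at least have to show that the functional you isolate is continuous with respect to $\|\mathbb{1}_{\{\bar y=0\}}v\|_{L^2}$ and use density of restrictions of $Y$-functions in $L^2(\{\bar y=0\})$, followed by the same pointwise inspection --- an argument of comparable substance that your proposal does not contain.
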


\begin{proof}
    Let $h \in L^2(\Omega)$ be arbitrary and define $\delta = S'(\bar{u};h)$. Then 
    the gradient equation in \eqref{eq:SKKTgradeq} and \cref{thm:rabl} give 
    \begin{equation}\label{eq:gradeq}
        \begin{aligned}[b]
            (- \partial_u J(\bar{y}, \bar{u}), h)_{L^2} 
            = (p, h)_{L^2} 
            & = (-\Delta \delta+ \mx'(\bar{y}; \delta), p )_{L^2} \\
            & = (-\Delta \delta + \chi\,\delta , p)_{L^2} 
            + (\mx'(\bar{y}; \delta) - \chi\,\delta, p )_{L^2}.
        \end{aligned}
    \end{equation}
    For the last term, \eqref{eq:SKKTsubdiff} and the sign condition in \eqref{eq:SKKTpsign} yield
    \begin{equation*}
        \begin{aligned}
            (\mx'(\bar{y}; \delta) - \chi\,\delta, p )_{L^2}
            =\int_{\{y=0\}\cap\{\delta \geq 0\}} (1 - \chi) \delta \,p \, \d x
            + \int_{\{y=0\}\cap\{\delta < 0\}} (- \chi) \delta \,p \, \d x \leq 0.
        \end{aligned}
    \end{equation*}
    Together with \cref{lem:adjoint}, this implies that \eqref{eq:gradeq} results in
    \begin{equation*}
        (- \partial_u J(\bar{y}, \bar{u}), h)_{L^2} 
        \leq \dual{\partial_y J(\bar{y}, \bar{u})}{\delta}_{Y', Y},
    \end{equation*}
    which is \eqref{eq:VI}.

    The other direction follows analogously to \cite[proof of Thm.~5.3]{ms16}. 
    Assume that $\bar u \in L^2(\Omega)$ with $\bar y := S(\bar u)$ satisfies \eqref{eq:VI}.
    Defining
    \begin{equation}\label{eq:subdiffSKKT1}
        p := - \partial_u J(\bar{y}, \bar{u}) \in L^2(\Omega)
    \end{equation}
    and $\eta := S'(\bar u; h)$, we have 
    \begin{equation*}
        \int_\Omega \Big ( -\Delta \eta + \mathrm{max}'(\bar y; \eta) \Big ) p\,\mathrm{d}x = \int_\Omega h p \,\mathrm{d}x\qquad \forall h\in L^2(\Omega).
    \end{equation*}
    From \eqref{eq:VI} we thus obtain that
    \begin{equation*}
        \int_\Omega  \mathrm{max}'(\bar y; \eta) p\,\mathrm{d}x \leq \left \langle  \Delta p + \partial_y J(\bar{y}, \bar{u}), \eta \right \rangle_{Y', Y}.
    \end{equation*}
    Let now $\lambda := \Delta p + \partial_y J(\bar{y}, \bar{u}) \in Y'$. The surjectivity of $S'(\bar u; \cdot) : L^2(\Omega) \to Y$ then yields
    \begin{equation*}
        \left \langle  \lambda, \eta \right \rangle_{Y', Y} \leq \int_\Omega  -\mathrm{max}'(\bar y; -\eta)   p\,\mathrm{d}x = \int_\Omega   \mathbb{1}_{\{\bar y > 0\}}\eta p 
        + \mathbb{1}_{\{\bar y = 0\}}\min(0, \eta)p \, \mathrm{d}x \qquad \forall \eta \in Y.
    \end{equation*}
    By the Hahn--Banach extension theorem, we can extend $\lambda$ from $Y$ to $L^2(\Omega)$, i.e., we obtain a $\mu \in L^2(\Omega)$ that satisfies this variational inequality for all $\eta\in L^2(\Omega)$. Pointwise inspection then shows that
    \begin{equation}\label{eq:subdiffSKKT2}
        \mu = p \text{ a.e.~in } \{\bar y > 0\},\qquad \mu = 0 \text{ a.e.~in } \{\bar y < 0\},\qquad p \leq \mu \leq 0 \text{ a.e.~in } \{\bar y = 0\}.
    \end{equation}
    Hence,
    \begin{equation*}
        \chi :=
        \begin{cases}
            1 & \text{a.e.\  in } \{\bar y > 0\},\\
            \frac{\mu}{p}\quad & \text{a.e.\ in } \{\bar y = 0\} \cap \{p \neq 0\},\\
            0 &  \text{a.e.\ } \text{else},
        \end{cases}
    \end{equation*}
    satisfies $\chi\in\partial_c \max(\bar y)$ a.e. as well as 
    \begin{equation*}
       \chi p = \mu = \Delta p + \partial_y J(\bar{y}, \bar{u}).
    \end{equation*}
    Together with \eqref{eq:subdiffSKKT1} and the third relation of \eqref{eq:subdiffSKKT2}, this yields \eqref{eq:SKKT}.
\end{proof}

\begin{remark}
    As in case of the optimality system \eqref{eq:BKKT}, the regularity of the adjoint state 
    in \cref{thm:strongstat} is again only limited by the mapping 
    and differentiability properties of the objective functional. 
    Thus, arguing as in \cref{cor:adPDE}, one shows that if $J$ is differentiable from $H^1_0(\Omega)\times L^2(\Omega)$ 
    or $L^2(\Omega)\times L^2(\Omega)$ to $\R$, the adjoint state
    $p$ satisfying \eqref{eq:adPDE} is an element of 
    $H^1_0(\Omega)$ or $Y$, respectively.
\end{remark}

\begin{remark}
    Although the optimality system \eqref{eq:BKKT} is comparatively strong by 
    \cref{theorem:summary}, it provides less information compared to the strong 
    stationarity conditions in \eqref{eq:SKKT} since it lacks the sign condition \eqref{eq:SKKTpsign} for the adjoint state. The conditions \eqref{eq:SKKT} can be seen as the most rigorous 
    dual-multiplier based optimality conditions, as by \cref{prop:equiv} they are equivalent to the purely primal condition. We point out, however, that the method of proof of  
    \cref{thm:strongstat} can in general not be transferred to the case with 
    additional control constraints (e.g., $u\in U_{\textup{ad}}$ 
    for a closed and convex set $U_{\textup{ad}}$), since it requires the set 
    $\{S'(\bar{u};h) : h \in \operatorname{cone}(U_\textup{ad} - \bar{u})\}$ to be dense in 
    $L^2(\Omega)$. In contrast to this, the adaptation of the limit analysis 
    in \cref{sec:limit} to the case with additional control constraints is
    straightforward as mentioned in \cref{rem:controlconstr}. 
\end{remark}

\section{Algorithms and numerical experiments}\label{sec:numerics}

One particular advantage of the optimality system in \eqref{eq:BKKT} is that it seems amenable to numerical solution 
as we will demonstrate in the following. 
We point out, however, that we do not present a comprehensive convergence analysis for 
our algorithm to compute stationary points satisfying \eqref{eq:BKKT} but only 
a feasibility study.
For the sake of presentation, we consider here an $L^2$ tracking objective of the form
\begin{equation}\label{eq:tracking}
    J(y,u) := \frac{1}{2}\, \|y - y_d\|_{L^2(\Omega)}^2 + \frac{\alpha}{2}\,\|u\|_{L^2(\Omega)}^2
\end{equation}
with a given desired state $y_d \in L^2(\Omega)$ and a Tikhonov parameter $\alpha > 0$.

\subsection{Discretization and semi-smooth Newton method}

Let us start with a short description of our discretization scheme, where we restrict ourselves from now on to the case $\Omega\subset\R^2$.
For the discretization of the state and the control variable, we use standard continuous 
piecewise linear finite elements (FE), cf., e.g., \cite{bs94}. 
Let us denote by $V_h \subset H^1_0(\Omega)$ the associated FE space spanned by the 
standard nodal basis functions $\varphi_1, \dots, \varphi_n$. 
The nodes of the underlying triangulation $\TT_h$ 
belonging to the interior of the domain $\Omega$
are denoted by $x_1, \dots, x_n$.
We then discretize the state equation in \eqref{eq:p} by employing a 
mass lumping scheme for the non-smooth nonlinearity. Specifically,
we consider the discrete state equation
\begin{equation}\label{eq:statediscr}
    \int_\Omega \nabla y_h\cdot \nabla v_h \,\d x 
    + \sum_{T\in \TT_h} \frac{1}{3}\,|T| \sum_{x_i \in \overline{T}} \max(0, y_h(x_i))\, v_h(x_i)
    = \int_\Omega u_h\,v_h\,\d x \quad \forall\, v_h \in V_h,
\end{equation}
where $y_h, u_h\in V_h$ denote the FE-approximations of $y$ and $u$. 
With a slight abuse of notation, we from now on denote the coefficient vectors $(y_h(x_i))_{i=1}^n$
and $(u_h(x_i))_{i=1}^n$ by $y, u \in \R^n$. The discrete state equation can then be written  
as the nonlinear algebraic equation
\begin{equation}\label{eq:stateFE}
    A y + D \max(0,y) = M u,
\end{equation}
where $A := (\int_\Omega \nabla \varphi_i\cdot \nabla \varphi_j\,\d x)_{ij=1}^n \in \R^{n\times n}$
and $M := (\int_\Omega \varphi_i\, \varphi_j\,\d x)_{ij=1}^n \in \R^{n\times n}$ denote 
stiffness and mass matrix, $\max(0,.): \R^n \to \R^n$ is the componentwise $\max$-function, and 
\begin{equation*}
    D := \diag\Big(\int_\Omega\varphi_i(x)\,\d x \Big) = \diag\Big(\frac{1}{3}\,  \omega_i \Big) \in \R^{n\times n}
\end{equation*}
with $\omega_i = |\supp(\varphi_i)|$ denoting the lumped mass matrix. Due to the monotonicity of  the $\max$-operator, one easily shows that 
\eqref{eq:statediscr} and \eqref{eq:stateFE} admit a unique solution for every 
control vector $u$. The objective functional is discretized by means of a suitable interpolation operator $I_h$
(e.g., the Cl\'ement interpolator or, if $y_d\in C(\overline{\Omega})$, the Lagrange interpolator).
If -- again by the abuse of notation -- we denote the coefficient vector of $I_h y_d$ with respect to the nodal basis 
by $y_d$, we end up with the discretized objective 
\begin{equation*}
    J_h : \R^n \times \R^n \to \R, \quad 
    J_h(y,u) := \frac{1}{2}\, (y - y_d)^\top M (y - y_d) + \frac{\alpha}{2}\, u^\top M u.
\end{equation*}
Again regularizing the $\max$-function in \eqref{eq:stateFE}, a limit analysis analogous to 
\cref{sec:limit} yields the following discrete counterpart to \eqref{eq:BKKT} with 
vectors $p, \chi\in \R^n$ as necessary optimality conditions for the discretized optimal control problem:
\begin{subequations}\label{eq:BKKTFE}
    \begin{align}
        &A y + D \max(0,y) = - \frac{1}{\alpha} M p, \label{eq:BKKTFE1}\\
        &A p + D\,\chi \circ p = M (y - y_d), \label{eq:BKKTFE2}\\
        &\chi_i \in \partial_c \max(y_i), \quad i = 1, \dots, n. \label{eq:BKKTFE3}
    \end{align}
\end{subequations}
Here, $a\circ b := (a_i b_i)_{i=1}^n$ denotes the Hadamard product, and we have eliminated 
the control by means of the gradient equation $p + \alpha u = 0$.

Next, we reformulate \eqref{eq:BKKTFE} as a non-smooth system of equations. 
To this end, let us introduce for a given $\gamma > 0$ the proximal point mapping 
$\prox_\gamma: \R\to\R$ of $\max$ by 
\begin{equation}\label{eq:proxgam}
    \prox_\gamma(x) := \argmin_{s\in\R} \Big( \max(0,s) + \frac{1}{2\gamma} \, |s-x|^2 \Big)
    = \begin{cases}
        x, & x < 0,\\
        0, & x\in [0,\gamma], \\
        x - \gamma, & x > \gamma.
    \end{cases}
\end{equation}
Since the proximal point mapping of $\max$ coincides with the resolvent $(I+\gamma\partial_c\max)^{-1}$ of its convex subdifferential, it is straightforward to show that $g\in \partial_c \max(z)$ if and only if 
$z = \prox_\gamma(z + \gamma \,g)$. Thus, for every $\gamma > 0$, 
\eqref{eq:BKKTFE3} is equivalent to the non-smooth equation 
\begin{equation}\tag{\ref{eq:BKKTFE3}$'$}\label{eq:BKKTFE3prime}
    y_i = \prox_\gamma(y_i + \gamma\,\chi_i), \quad i = 1, \dots, n.
\end{equation}
Since $\prox_\gamma$ is Lipschitz continuous and piecewise continuously differentiable by \eqref{eq:proxgam}, 
it is semi-smooth as a function from $\R\to \R$. As the same holds for the $\max$-function, 
it seems reasonable to apply a semi-smooth Newton method to numerically solve
the system consisting of \eqref{eq:BKKTFE1}, \eqref{eq:BKKTFE2}, and \eqref{eq:BKKTFE3prime}.
However, the application of a Newton-like scheme to \eqref{eq:BKKTFE} is 
a delicate issue. This can already be seen by observing that $\chi_i$ is not unique in points where $y_i$ 
and $p_i$ vanish at the same time. Moreover, the Newton matrix may well be singular. 
For a clearer notation, let us introduce the index sets $\II_+ := \{i : y_i > 0\}$
and $\II_\gamma := \{i: y_i + \gamma \, \chi_i \notin [0,\gamma]\}$, and denote by 
$\mathbb{1}_{\II_+}$ and $\mathbb{1}_{\II_\gamma}$ the characteristic vectors of these index sets. 
The Newton matrix associated with \eqref{eq:BKKTFE} is given by
\begin{equation*}
    \begin{pmatrix}
        A + D\, \diag(\mathbb{1}_{\II_+}) & \frac{1}{\alpha}\,M & 0 \\
        -M & A + D\, \diag(\chi) & D\,\diag(p) \\
        D - D\diag(\mathbb{1}_{\II_\gamma}) & 0 & - \gamma\,D\diag(\mathbb{1}_{\II_\gamma})
    \end{pmatrix},
\end{equation*}
where we have multiplied the last row corresponding to \eqref{eq:BKKTFE3prime} by the lumped mass matrix to ensure a uniform scaling of the Newton equations.
As already noted, the matrix becomes singular if there is an index $i\in \{1, \dots , n\}$ such that 
$p_i =  0$ and $y_i + \gamma \, \chi_i \in [0,\gamma]$. 
To resolve this problem, we follow an active set-type strategy where we remove the components of $\chi$ corresponding to these indices from the Newton equation and leave them unchanged in the Newton update. This is motivated by the fact that components that are fixed in this way will likely either be changed in a further iteration (and hence no longer be singular) or correspond to underdetermined components in the optimality conditions for which we thus take the (feasible) starting value as a final value. A similar strategy was also followed in \cite{Pieper} (see Chap.~3.4.1 and Rem.~3.9\,ii)). This procedure worked well in our numerical tests at least for small values of $\gamma$ (where the influence of the critical set on the Newton matrix is smaller), as we will demonstrate below.

\begin{remark}\label{rem:strongstatnum}
    It is not clear how to numerically solve the strong stationarity conditions in \eqref{eq:SKKT} 
    since the system may well become overdetermined by the sign condition in \eqref{eq:SKKTpsign}. 
\end{remark}

\subsection{Numerical results}

We now present two different examples with a constructed exact solution to \eqref{eq:BKKT} in order to demonstrate convergence of the proposed algorithm and to illustrate the dependence on the parameters $\alpha$ and $\gamma$ as well as on the mesh size $h$.
In both examples, the state vanishes in parts of the domain so that the non-smoothness of the $\max$-function becomes apparent. 
For this purpose, we introduce an additional inhomogeneity in the state equation, i.e., we replace the PDE 
in \eqref{eq:p} by
\begin{equation*}
    y \in H_0^1(\Omega), \quad -\Delta y + \max(0, y) = u + f \; \text{ in }\Omega
\end{equation*}
with a given function $f\in L^2(\Omega)$. It is easy to see that this modification does not 
influence the analysis in the preceding sections. 
The domain is chosen as the unit square $\Omega = [0,1]^2\subset \R^2$, which is discretized by means of Friedrich--Keller triangulations.
In all cases, we take as a starting guess for the Newton iteration $y^0=p^0=\chi^0=0$, and terminate the iteration if either the combined norm of the residuals in \eqref{eq:BKKTFE1}, \eqref{eq:BKKTFE2}, and \eqref{eq:BKKTFE3prime} becomes less than $10^{-12}$ or if the maximum number of $25$ Newton iterations is reached. The Newton system in each iteration is solved by \textsc{matlab}'s sparse direct solver.

\bigskip

In the first example, the optimal state and adjoint state are set to 
\begin{equation*}
    y(x_1, x_2) = \sin(\pi\, x_1) \sin(2\pi\, x_2) \quad \text{ and }\quad
    p \equiv 0,
\end{equation*}
and the data $f$ and $y_d$ are constructed such that \eqref{eq:BKKT} is fulfilled, i.e., the optimal control is $u\equiv 0$.
We note that there is a subset where $y$ and $p$ vanish at the same time, but 
it is only of measure zero. 
\Cref{tab:ex1} presents the numerical results for different values of the mesh size $h$,
the Tikhonov parameter $\alpha$ in the objective in \eqref{eq:tracking}, and the parameter 
$\gamma$ in the proximal point mapping.
For the state $y$, we report the relative error of the computed approximation $y_h$ with respect to the ($L^2$ projection of the) constructed optimal state $y$ in the continuous $L^2$ norm. For this choice of the adjoint state $p$, the relative error is of course not appropriate, and we thus report here the absolute error in the continuous $L^2$ norm. The error for $\chi$ is given in the discrete
$L^\infty$ norm, $\|\chi_h - \chi\|_{L^\infty,h} := \max_{i = 1, ..., n} |\chi_h(x_i) - \chi(x_i)|$, where the
$x_i$ are the interior nodes of the triangulation. (Note that $\chi$ is everywhere constant except in
$\{ y = 0 \}$, and the triangulation is chosen such that $y(x_i)\neq 0$.)

\begin{table}
    \caption{Numerical results in the first example}\label{tab:ex1}
    \centering
    \begin{tabular}{ccccccc}
        \toprule
        $h$ & $\alpha$ & $\gamma$ & $\frac{\|y_h - y\|_{L^2}}{\|y\|_{L^2}}$
            & $\|p_h - p\|_{L^2}$
            & $\|\chi_h - \chi\|_{L^\infty,h}$
            & \#~Newton\\
        \midrule
        \num{0.030303030303030} & \num{0.0001} & \num{0.0001}
                                & \num{0.001152239343045} & 
        \num{0.00001035567491004718}  & \num{0.0000008150131505891369} 
                                      & 3 \\ 
        \num{0.015384615384615} & \num{0.0001} & \num{0.0001}
                                & \num{0.0002962149528697947} & 
        \num{0.000002679405106924650}  & \num{0.0000008148769177172133} 
                                       & 3 \\ 
        \num{0.007751937984496} & \num{0.0001} & \num{0.0001}
                                & \num{0.00007515492583947963} & 
        \num{0.0000006809496128184699}  & \num{0.0000008155578545099859} 
                                        & 3 \\ 
        \num{0.003891050583658} & \num{0.0001} & \num{0.0001}
                                & \num{0.00001893193153042300} &  
        \num{0.0000001716079487056943}  & \num{0.0000008156259169223314} 
                                        & 3 \\ 
        \midrule
        \num{0.007751937984496} & \num{0.0001} & \num{0.01}
                                & -- & -- & -- & no conv. \\ 
        \num{0.007751937984496} & \num{0.0001} & \num{0.001}
                                & -- & -- & -- & no conv. \\ 
        \num{0.007751937984496} & \num{0.0001} & \num{0.00001}
                                & \num{0.00007515492583949867} & 
        \num{0.0000006809496128183225}  & \num{0.000003177810190947213} 
                                        & 3 \\ 
        \num{0.007751937984496} & \num{0.0001} & \num{0.000001}
                                & \num{0.00007515492583953322} & 
        \num{0.0000006809496128190883}  & \num{0.000009178413752895815} 
                                        & 3 \\ 
        \midrule
        \num{0.007751937984496} & \num{0.01} & \num{0.0001}
                                & \num{0.0003266863213013805} & 
        \num{0.000003240867239796822}  & \num{0.0000008154217126419962} 
                                       & 3 \\ 
        \num{0.007751937984496} & \num{0.001} & \num{0.0001}
                                & \num{0.0002443542586863312} & 
        \num{0.000002405294957718857}  & \num{0.0000008153536331835063} 
                                       & 3 \\ 
        \num{0.007751937984496} & \num{0.000001} & \num{0.0001}
                                & \num{0.000002449250392773870} & 
        \num{0.000000009204169223006045}  & \num{0.0000008148769177172133} 
                                          & 3 \\ 
        \num{0.007751937984496} & \num{0.00000001} & \num{0.0001}
                                & \num{0.0000001198993334906963} & 
        \num{0.00000000009452339609744186}  & \num{0.0000008152855480401214} 
                                            & 3 \\ 
        \bottomrule
    \end{tabular}
\end{table}

First, we remark that for almost all combinations of parameter values, only a few Newton iterations are needed to reach a residual norm below  $10^{-12}$. Regarding the dependence on the mesh size, we can observe quadratic convergence of the state $y$ and the adjoint state $p$. Since in this case, the active set satisfies $\chi(x_i)\in\{0,1\}$ for all $x_i$, the approximation only depends on the sign of $y$ and is hence independent of the mesh size (for the considered values of $h$). Turning to the behavior with respect to $\gamma$, we first note that the Newton iteration failed to converge for larger values. This can be explained by the fact that for larger values of $\gamma$, the critical set where $y_i\in [-\gamma,0]$ and hence $y_i+\gamma\chi_i\in [0,\gamma]$ becomes larger so that we expect the local convergence of Newton's method to become an issue. For smaller values of $\gamma$, the Newton iteration converges quickly, and the approximation of $y$ and $p$ is independent of $\gamma$. This is not the case for $\chi$, where the approximation becomes worse. Here we point out that while \eqref{eq:BKKTFE3prime} is equivalent to \eqref{eq:BKKTFE3} for any value of $\gamma$, this only holds for exact solutions. A simple pointwise inspection shows that if \eqref{eq:BKKTFE3prime} does not hold exactly but only up to a residual of $\varepsilon$, then $\chi_\varepsilon = \chi + \mathcal{O}(\frac\varepsilon\gamma)$. Finally, we see that the Newton method is robust with respect to $\alpha$, and the approximation of $y$ and $p$ even improves for smaller $\alpha$. However, this seems to be a particularity of this example, since the Tikhonov parameter enters the data through the construction of this exact solution.

\bigskip

For the second example, we choose
\begin{equation*}
    y(x_1,x_2) = p(x_1, x_2)
    = \begin{cases}
        \big((x_1 - \frac{1}{2})^4 + \frac{1}{2}(x_1 - \frac{1}{2})^3\big) \sin(\pi\,x_2) , 
        & x_1 < \frac{1}{2},\\
        0, & x_1 \geq \frac{1}{2}.
    \end{cases}
\end{equation*}
Note that $y$ and $p$ are twice continuously differentiable and vanish on the 
right half of the unit square. Therefore, the non-smoothness of the $\max$-function occurs on a set of positive measure in this example. 
Moreover, as $y$ and $p$ vanish at the same time, $\chi$ is not unique in this set. 
This example can thus be seen as a worst-case scenario. Nevertheless, our algorithm 
is able to produce reasonable results as \cref{tab:ex2} demonstrates. 
(Note that it does not make sense to list the discrete $L^\infty$ error for $\chi_h$, 
since $\chi$ is not unique as mentioned above. In contrast, we can now report relative errors for $p_h$.)

\begin{table}
    \caption{Numerical results in the second example}\label{tab:ex2}
    \centering
    \begin{tabular}{cccccc}
        \toprule
        $h$ & $\alpha$ & $\gamma$ & $\frac{\|y_h - y\|_{L^2}}{\|y\|_{L^2}}$
            & $\frac{\|p_h - p\|_{L^2}}{\|p\|_{L^2}}$
            & \#~Newton\\
        \midrule
        \num{0.030303030303030} & \num{0.0001} & \num{0.000000000001}
                                & \num{0.870788577013324} & 
        \num{0.016062214301974} 
        & 4 \\ 
        \num{0.015384615384615} & \num{0.0001} & \num{0.000000000001}
                                & \num{0.228093558993323} & 
        \num{0.004541351248362} 
        & 5 \\ 
        \num{0.007751937984496} & \num{0.0001} & \num{0.000000000001}
                                & \num{0.058208952886468} & 
        \num{0.001208716007171} 
        & 3 \\ 
        \num{0.003891050583658} & \num{0.0001} & \num{0.000000000001}
                                & \num{0.014691669074202} &  
        \num{0.0003118564884766301} 
        & 3 \\ 
        \midrule
        \num{0.007751937984496} & \num{0.0001} & \num{0.000001}
                                & -- & -- 
                                & no conv. \\ 
        \num{0.007751937984496} & \num{0.0001} & \num{0.00000001}
                                & -- & -- 
                                & no conv. \\ 
        \num{0.007751937984496} & \num{0.0001} & \num{0.0000000001}
                                & \num{0.058208877119612} & 
        \num{0.001208715657339}
        & 3 \\ 
        \num{0.007751937984496} & \num{0.0001} & \num{0.00000000000001}
                                & \num{0.058208952886455} & 
        \num{0.001208716007172}
        & 3 \\ 
        \midrule
        \num{0.007751937984496} & \num{0.01} & \num{0.000000000001}
                                & \num{0.003006983383018} & 
        \num{0.001747429748373}
        & 2 \\ 
        \num{0.007751937984496} & \num{0.001} & \num{0.000000000001}
                                & \num{0.016592135633633} & 
        \num{0.001511691646965} 
        & 2 \\ 
        \num{0.007751937984496} & \num{0.00001} & \num{0.000000000001}
                                & \num{0.169164037555559} & 
        \num{0.0008659033532584455} 
        & 5 \\ 
        \num{0.007751937984496} & \num{0.000001} & \num{0.000000000001}
                                & -- & -- 
                                & no conv. \\ 
        \bottomrule
    \end{tabular}
\end{table}

The algorithm shows a similar behavior as in the first example.
Again, we observe quadratic convergence with respect to mesh refinement and that the Newton method does not converge if $\gamma$ is chosen too large.
(Note that in this example, the optimal state $y$ is scaled differently, which influences the effect of $\gamma$ in \eqref{eq:BKKTFE3prime}.)
In contrast to the first example, smaller values of $\alpha$ lead to worse numerical approximation and slower convergence of the Newton method (and even non-convergence for $\alpha = 10^{-6}$).
This is a typical observation which is also made in case of smooth optimal control problems.

\bigskip

In summary, one can conclude that our semi-smooth Newton-type method 
seems to be able to solve the discrete optimality system \eqref{eq:BKKTFE} for a certain range of parameters, even in genuinely non-smooth cases.
However, a comprehensive convergence analysis is still lacking,
and the choice of the parameter $\gamma$ appears to be a delicate issue.
Moreover, as already mentioned in Remark~\ref{rem:strongstatnum}, it is completely unclear 
how to incorporate the sign condition in \eqref{eq:SKKTpsign} into the algorithmic framework. 
This is the subject of future research.

\appendix
\section{Smooth ``characteristic'' functions of open sets}

\begin{lemma}\label{lem:randomlemma}
    For every open set $D \subseteq \R^d$  
    there exists a function $\psi \in C^\infty(\R^d)$ 
    such that $\psi > 0$ everywhere in $D$ and $\psi \equiv 0$ in $\R^d \setminus D$.
\end{lemma}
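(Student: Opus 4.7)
The plan is to construct $\psi$ as a convergent countable sum of nonnegative bump functions, each supported in a ball compactly contained in $D$, together covering $D$.

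First, since $\R^d$ is second countable, there is a countable family of open balls $\{B_n\}_{n\in\N}$ with $\overline{B_n}\subset D$ such that $D=\bigcup_{n\in\N} B_n$. (Take, for instance, the collection of all open balls with rational center and rational radius whose closure is contained in $D$; since $D$ is open, every point of $D$ lies in such a ball.) For each $n$, pick a standard mollifier-type bump $\varphi_n\in C_c^\infty(\R^d)$ with $\supp\varphi_n\subseteq \overline{B_n}\subset D$, $\varphi_n\geq 0$, and $\varphi_n>0$ on $B_n$.

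Next, I would choose positive constants $c_n>0$ such that $c_n\,\|\varphi_n\|_{C^n(\R^d)}\leq 2^{-n}$, where $\|\cdot\|_{C^n}$ denotes the maximum of the supremum norms of all partial derivatives up to order $n$. Define
\begin{equation*}
    \psi(x) := \sum_{n=1}^\infty c_n\,\varphi_n(x), \qquad x\in\R^d.
\end{equation*}
For every multi-index $\alpha$ with $|\alpha|\leq k$, the series $\sum_{n\geq k} c_n\,D^\alpha\varphi_n$ is majorized term-by-term by $\sum_{n\geq k} 2^{-n}$, so by the Weierstrass $M$-test it converges uniformly on $\R^d$. Hence $\psi\in C^\infty(\R^d)$ and termwise differentiation is justified.

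Finally, I would verify the required sign properties. For $x\in\R^d\setminus D$, every $\varphi_n(x)=0$ since $\supp\varphi_n\subset D$, so $\psi(x)=0$. For $x\in D$, there is some $n$ with $x\in B_n$, hence $\varphi_n(x)>0$; as all summands are nonnegative and $c_n>0$, we obtain $\psi(x)\geq c_n\varphi_n(x)>0$. The only subtle point is ensuring smoothness of the infinite sum, which is handled by the choice of the $c_n$ so that the tail of each derivative series is uniformly summable; everything else is a routine application of second countability and standard bump-function constructions.
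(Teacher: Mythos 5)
Your proposal is correct and follows essentially the same route as the paper: cover $D$ by countably many balls, take a nonnegative bump function positive on each ball, and sum them with weights chosen so that all derivative series converge. The only difference is cosmetic — you normalize by $C^n$-norms and invoke the Weierstrass $M$-test directly, whereas the paper normalizes by $H^n(\R^d)$-norms and then appeals to the Sobolev embedding theorem; your variant is, if anything, slightly more elementary.
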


\begin{proof}
    Since the collection of all open balls with rational radii and rational centers 
    forms a base of the Euclidean topology on $\R^d$, 
    given an arbitrary but fixed open set $D$ we find (non-empty) open balls 
    $B_n \subseteq D$, $n \in \mathbb{N}$, such that 
    \begin{equation*}
        D = \bigcup_{n=1}^\infty B_n.
    \end{equation*}
    For every ball $B_n$, there is a smooth rotational symmetric bump function 
    $\psi_n \in C^\infty(\R^d)$ with $\psi_n > 0$ in $B_n$ 
    and $\psi_n \equiv 0 $ in $\R^d \setminus B_n$. Defining
    \begin{equation*}
        \psi := \sum_{n=1}^\infty \frac{\psi_n}{2^n \|\psi_n\|_{H^n(\R^d)}}.
    \end{equation*}
    it holds that $\psi > 0$ in $D$, $\psi \equiv 0$ in $\R^d \setminus D$, 
    and $\psi \in H^n(\R^d)$ for all $n\in \N$. Sobolev's embedding theorem then yields the claim.
\end{proof}

\section{Approximation of functions in \texorpdfstring{$\scriptstyle L^\infty(\Omega;\{0,1\})$}{L∞(Ω;{0,1})} and \texorpdfstring{$\scriptstyle L^\infty(\Omega;[0,1])$}{L∞(Ω;[0,1])}}
\label{sec:approx}

\begin{lemma}\label{MassEffect}
    If $(X, \Sigma,\mu)$ is a finite measure space and if $\mathcal{A}:=\{A_i\}_{i \in I}$, 
    $I \subset \R$, is a collection of measurable disjoint sets $A_i \in \Sigma$,
    then there exists a countable set $N \subset I$ such that $\mu(A_i) = 0$ 
    for all $i \in I \setminus N$.
\end{lemma}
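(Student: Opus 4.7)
The plan is to exploit the finiteness of $\mu$ together with the disjointness of the family $\{A_i\}_{i \in I}$ to show that only countably many sets in the collection can have positive measure. The key observation is that if too many disjoint sets have measure exceeding a fixed positive threshold, their disjoint union would have infinite measure, contradicting $\mu(X) < \infty$.

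Concretely, for each integer $k \in \mathbb{N}$ I would define the index set
\begin{equation*}
    N_k := \bigl\{ i \in I : \mu(A_i) > \tfrac{1}{k} \bigr\}.
\end{equation*}
For any finite subcollection $\{i_1, \dots, i_m\} \subseteq N_k$, disjointness and monotonicity give
\begin{equation*}
    \frac{m}{k} < \sum_{j=1}^m \mu(A_{i_j}) = \mu\Bigl( \bigcup_{j=1}^m A_{i_j} \Bigr) \leq \mu(X),
\end{equation*}
so that $m < k \, \mu(X)$. Hence $N_k$ is finite with $|N_k| \leq k \, \mu(X)$.

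The set $N := \bigcup_{k \in \mathbb{N}} N_k$ is then a countable union of finite sets and thus countable. Moreover, for every $i \in I \setminus N$ we have $\mu(A_i) \leq 1/k$ for all $k \in \mathbb{N}$, which forces $\mu(A_i) = 0$. This yields the claim.

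The argument is essentially routine once the thresholding idea is in place; I do not foresee any significant obstacle. The only mild subtlety worth spelling out is that the collection $\mathcal{A}$ is indexed by a (possibly uncountable) index set $I \subset \mathbb{R}$, so one must avoid summing $\mu(A_i)$ over all $i \in I$ directly. The bound on finite subcollections of $N_k$ circumvents this issue cleanly, without invoking any notion of uncountable summation.
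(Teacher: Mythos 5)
Your proof is correct and follows essentially the same route as the paper: threshold the measures at $1/k$, use disjointness and $\mu(X)<\infty$ to show each threshold set of indices is finite, and take the countable union over $k$. The only (immaterial) difference is your strict inequality $\mu(A_i) > 1/k$ versus the paper's $\mu(A_i)\geq 1/k$.
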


\begin{proof}
    Define $\mathcal{A}_k := \{ A_i \in \mathcal{A} : \mu(A_i)  \geq 1/k\}$, $k \in \mathbb{N}$. 
    Then every $A_i \in \mathcal{A}$ with $\mu(A_i) > 0$ is contained in at least one $\mathcal{A}_k$.  
    Suppose $A_{i_1},\dots, A_{i_m}$ are contained in $\mathcal{A}_k$. 
    Then their disjointness implies
    \begin{equation*}
        \frac{m}{k} \leq \sum_{l=1}^m \mu(A_{i_l}) \leq \mu(X),
    \end{equation*}  
    and thus $m \leq k \mu(X) < \infty$. Consequently, 
    there can only be finitely many $A_i$ in each $\mathcal{A}_k$ 
    such that the set $\{i \in I : \mu(A_i) > 0\}$ is countable.
\end{proof}

\begin{lemma}\label{lem:approx}\ 
    \begin{enumerate}[label=(\roman*)]
        \item\label{it:approxopen}
            If $A\subseteq \Omega$ is open, then there exists a sequence 
            $\varphi_n \in Y$ with $\lambda^d(\{\varphi_n = 0\})=0$ such that 
            \begin{equation*}
                \mathbb{1}_{\{\varphi_n > 0\}} \to \mathbb{1}_{A}\text{ pointwise a.e. in }\Omega.
            \end{equation*}
        \item\label{it:approxborel}
            If $A\subseteq \Omega$ is Lebesgue measurable, 
            then there exists a sequence of open sets $A_n \subseteq \Omega$ such that
            \begin{equation*}
                \mathbb{1}_{A_n} \to  \mathbb{1}_{A}\text{ pointwise a.e. in }\Omega. 
            \end{equation*}
        \item\label{it:approxborelweight}
            If $A\subseteq \Omega$ is Lebesgue measurable and if $c \in (0, 1]$ is arbitrary but fixed, 
            then there exists a sequence of Lebesgue measurable sets $A_n \subseteq A$ such that
            \begin{equation*}
                \mathbb{1}_{A_n} \weakly^{*}   c\mathbb{1}_{A}\text{ in } L^\infty(\Omega). 
            \end{equation*}
        \item\label{it:approxsimple}
            If $\chi : \Omega \to \R$ is a simple function of the form 
            \begin{equation*}
                \chi := \sum_{k=1}^K c_k \mathbb{1}_{B_k}
            \end{equation*}
            with $K\in\N$, $c_k \in (0, 1]$,
            and $B_k \subseteq \Omega$ Lebesgue measurable and mutually disjoint, 
            then there exists a sequence of Lebesgue measurable sets $A_n \subseteq \Omega$ such that
            \begin{equation*}
                \mathbb{1}_{A_n} \weakly^{*} \chi \text{ in }L^\infty(\Omega).
            \end{equation*}
    \end{enumerate}
\end{lemma}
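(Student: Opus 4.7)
The plan is to treat the four parts separately, with (ii)--(iv) relying on classical measure-theoretic constructions and (i) carrying the main technical weight.

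For (i), I would start from \cref{lem:randomlemma}, which applied to the open sets $A$ and $\Omega$ yields $\psi_A, \psi_\Omega \in C^\infty(\R^d)$ strictly positive on the respective sets and vanishing off them. A first candidate is $\varphi_n := \psi_A - t_n \psi_\Omega$ with $t_n \searrow 0$: on $A$ the term $\psi_A$ is strictly positive, so $\varphi_n(x) > 0$ for $n$ large, while on $\Omega \setminus A$ one has $\psi_A = 0$ and $\psi_\Omega > 0$, so $\varphi_n < 0$, yielding $\mathbb{1}_{\{\varphi_n > 0\}}(x) \to \mathbb{1}_A(x)$ pointwise on $\Omega$. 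The delicate point will be to simultaneously arrange $\varphi_n \in Y$ and $\lambda^d(\{\varphi_n = 0\}) = 0$: membership in $Y$ I would obtain by multiplying the candidate with a smooth cut-off of compact support in $\Omega$ chosen large enough along an exhaustion, combined with a diagonal argument that preserves the pointwise convergence; the negligibility of the zero set, which could fail on $\partial A$ (for instance if $\partial A$ is a fat Cantor-type set), I would handle by invoking Sard's theorem for the smooth ratio $\psi_A/\psi_\Omega$ defined on $\{\psi_\Omega > 0\}$ and selecting $t_n$ close to $1/n$ from among the regular values, so that $\{\psi_A = t_n \psi_\Omega\}$ is a smooth $(d-1)$-submanifold and hence Lebesgue null.

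For (ii), the key tool is the outer regularity of Lebesgue measure: for each $n$ I would choose an open $U_n \subseteq \Omega$ with $A \subseteq U_n$ and $\lambda^d(U_n \setminus A) < 2^{-n}$, set $A_n := \bigcap_{k \leq n} U_k$ (still open as a finite intersection), observe $A \subseteq A_n$, and note that $\lambda^d(A_n \setminus A) \to 0$ monotonically; the intersection $\bigcap_n (A_n \setminus A)$ is then a null set outside of which $\mathbb{1}_{A_n} \to \mathbb{1}_A$ pointwise. For (iii), I would employ a divide-and-take-fraction scheme: partition $\Omega$ into cubes $Q$ of side $1/n$, and within each cube select a measurable subset $A_{n,Q} \subseteq A \cap Q$ of measure exactly $c\,\lambda^d(A \cap Q)$ (e.g.\ via an intermediate-value argument in a coordinate direction). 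Setting $A_n := \bigcup_Q A_{n,Q}$, weak-$\ast$ convergence $\mathbb{1}_{A_n} \weakly^{*} c\mathbb{1}_A$ in $L^\infty(\Omega)$ follows by testing against step functions piecewise constant on fixed coarser grids, for which cellwise averaging gives the limit exactly, together with the uniform bound $\|\mathbb{1}_{A_n}\|_{L^\infty} \leq 1$ and the density of such step functions in $L^1(\Omega)$. Finally, for (iv), applying (iii) to each $B_k$ separately with weight $c_k$ yields disjoint measurable sets $A_{n,k} \subseteq B_k$ with $\mathbb{1}_{A_{n,k}} \weakly^{*} c_k \mathbb{1}_{B_k}$; the disjointness of the $B_k$ then makes $A_n := \bigcup_{k=1}^K A_{n,k}$ satisfy $\mathbb{1}_{A_n} = \sum_{k=1}^K \mathbb{1}_{A_{n,k}} \weakly^{*} \chi$ by linearity.

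The principal obstacle is the simultaneous realization in (i) of the sign pattern of $\varphi_n$, the $Y$-regularity, and the negligibility of the zero set; the remaining parts are essentially standard measure-theoretic exercises once the Sard-type argument and the cut-off procedure for (i) are in place.
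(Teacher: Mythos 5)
Your constructions in (ii) and (iv) coincide with the paper's proof, and your argument for (i) starts from exactly the paper's candidate $\varphi_n=\psi_A-t_n\psi_\Omega$; the two places where you deviate are one valid alternative and one genuine flaw. The flaw is the cut-off repair in (i): if $\eta$ is a smooth cut-off with compact support $K\subset\Omega$, then $\eta\varphi_n$ vanishes identically on the nonempty open set $\Omega\setminus K$, which has positive Lebesgue measure, so $\lambda^d(\{\eta\varphi_n=0\})>0$ no matter how the regular values $t_n$ are chosen, and no diagonalization along an exhaustion can restore the condition $\lambda^d(\{\varphi_n=0\})=0$ — which is precisely the point of the lemma, since it is what feeds \eqref{eq:nullsep} in \cref{lem:preliminarycharacterization}. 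Moreover, the cut-off is unnecessary: the functions delivered by \cref{lem:randomlemma} are, by their construction as $H^k(\R^d)$-convergent series of bump functions supported in balls contained in the open set, restrictions to $\Omega$ of functions in $H^k(\R^d)$ for every $k$ that belong to $H^1_0(\Omega)$ and have $\Delta\psi\in L^2(\Omega)$; hence $\varphi_n=\psi_A-t_n\psi_\Omega\in Y$ directly, which is exactly what the paper uses. What your proof needs is a justification of this membership, not a modification of the function. Your Sard argument for the null zero set is fine as an alternative: the paper instead observes that the zero sets $\{\psi_A=t\psi_\Omega\}\cap\Omega$ for distinct $t$ are pairwise disjoint (subtracting the two equations forces $\psi_\Omega=0$, impossible in $\Omega$) and invokes \cref{MassEffect}, so that all but countably many $t\in(0,1)$ give a null zero set; both routes work, the paper's being more elementary and the same device it reuses in \cref{lem:preliminarycharacterization}.

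For (iii) you take a genuinely different and correct route. The paper builds a fixed periodic stripe pattern $B_n$ (boxes of side $1/n$ with a slab of height $c/n$), shows $\mathbb{1}_{B_n}\weakly^{\ast}c$ in $L^\infty(\R^d)$ by testing with $C_c^\infty$ functions, and sets $A_n:=B_n\cap A$; you instead select inside each cell $Q$ of a mesh-$1/n$ grid a measurable subset of $A\cap Q$ of measure exactly $c\,\lambda^d(A\cap Q)$ (intermediate value in a coordinate direction) and test against step functions. This works and even gives exact equality of the tested integrals once the fine grid refines the test cells, together with the uniform $L^\infty$ bound and density of step functions in $L^1(\Omega)$; just make sure the grids are nested (e.g.\ dyadic side lengths) so that the cellwise averaging argument is literally exact, or otherwise account for the boundary cells of the coarse grid.
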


\begin{proof}
    Ad \ref{it:approxopen}: 
    Let $A \subseteq \Omega$ be an arbitrary but fixed open set. By \cref{lem:randomlemma}, 
    there exist functions $\psi, \phi \in C^\infty(\R^d)$ with
    \begin{equation*}
        \psi > 0 \text{ in } \Omega, \quad \psi \equiv 0 \text{ in } \R^d \setminus \Omega, 
        \quad \text{and} \quad
        \phi > 0 \text{ in } A, \quad \phi \equiv 0 \text{ in } \R^d \setminus A.
    \end{equation*}
    So, if we define $\varphi_{\varepsilon} := \phi - \varepsilon \psi$, 
    then $\varphi_{\varepsilon} \in Y \cap C(\overline{\Omega})$ 
    for all $\varepsilon \in (0, 1)$ and $\varphi_{\varepsilon}(x) \to \phi(x)$ 
    for all $x \in \Omega$ as $\varepsilon \to 0$ 
    (here and in the following, we always use continuous representatives). 
    Moreover, the sign conditions on $\psi$ and $\phi$ imply that
    \begin{equation}\label{eq:charcatconv}
        \mathbb{1}_{\{\varphi_\varepsilon > 0\}} \to \mathbb{1}_{A}\text{ pointwise a.e. in }\Omega.
    \end{equation}
    Consider now some $\varepsilon_1, \varepsilon_2 \in (0, 1)$ 
    with $\varepsilon_1 \neq \varepsilon_2$. Then it holds that
    \begin{equation*}
        \begin{multlined}[c][0.9\displaywidth]
            \{x \in \Omega : \varphi_{\varepsilon_1}(x)  = 0 \} 
            \cap \{x \in \Omega : \varphi_{\varepsilon_2}(x)  = 0 \} \\[1ex]
            = \{x \in \Omega : \phi(x) - \varepsilon_1 \psi(x)   = 0 
            \text{ and } \phi(x) - \varepsilon_2 \psi(x) = 0\}  
            = \{x \in \Omega : \varepsilon_1  = \varepsilon_2  \} = \emptyset,
        \end{multlined}
    \end{equation*}
    showing that the collection 
    $( \{ x \in \Omega :  \varphi_{\varepsilon}(x) = 0\} )_{\varepsilon \in (0, 1)} $ is disjoint.   
    Analogously to the proof of \cref{lem:preliminarycharacterization}, 
    we now obtain by means of \cref{MassEffect} that there exists a sequence $(\varepsilon_n)$ 
    with $\varepsilon_n \to 0$ as $n \to \infty$ such that
    \begin{equation*}
        \lambda^d(\{ x \in \Omega :  \varphi_{\varepsilon_n}(x) = 0\}) = 0\quad \forall n \in \mathbb{N}.
    \end{equation*}
    Together with \eqref{eq:charcatconv}, this establishes the assertion of part \ref{it:approxopen}.

    Ad \ref{it:approxborel}: The outer regularity of the Lebesgue measure implies the existence
    of open sets $\tilde{A}_n \subseteq \Omega$ such that
    \begin{equation*}
        A \subseteq \tilde{A}_n 
        \quad \text{and} \quad 
        \lambda^d(\tilde{A}_n \setminus A) < \frac{1}{n}\quad \forall n \in \mathbb{N}.
    \end{equation*}
    Let us define
    \begin{equation*}
        A_n := \bigcap_{m=1}^n \tilde{A}_m \quad \forall\, n \in \mathbb{N}.
    \end{equation*}
    Then $A_n$ is open for all $n \in \mathbb{N}$, and it holds that
    \begin{equation*}
        A_{n+1}\subseteq A_n,\quad A \subseteq A_n, 
        \quad \text{and} \quad \lambda^d(A_n \setminus A) < \frac{1}{n}\quad \forall n \in \mathbb{N}. 
    \end{equation*}
    The above implies that 
    \begin{equation*}
        \mathbb{1}_{A_n}(x) \to  \mathbb{1}_{A}(x)
        \quad \forall \, x \in A \cup \bigcup_{n \in \mathbb{N}}  \Omega \setminus A_n 
        = \Omega \setminus \left ( \bigcap_{n\in \mathbb{N}} A_n \setminus A \right ). 
    \end{equation*}
    Since the exceptional set appearing above has measure zero, this proves \ref{it:approxborel}.

    Ad \ref{it:approxborelweight}: We apply a homogenization argument.
    Given $n \in \mathbb{N}$, let us define 
    \begin{equation*}
        B_{n} := \bigcup_{k \in \mathbb{Z}^d} \frac{1}{n} k 
        + \left [0, \frac{1}{n} \right ]^{d-1} \times \left [0, \frac{1}{n} c \right ] 
        \subseteq \R^d.
    \end{equation*}
    The sequence $(\mathbb{1}_{B_n})$ is bounded in $L^\infty(\R^d)$, 
    and we may extract a subsequence (not relabeled) such that 
    $\mathbb{1}_{B_n} \weakly^{*}  \chi$ in $L^\infty(\R^d)$ 
    for some $\chi \in L^\infty(\R^d)$. 
    Consider now an arbitrary but fixed $\varphi \in C_c^\infty(\R^d)$. Then it holds that
    \begin{equation*}
        \begin{aligned}
            \int_{\R^d} \mathbb{1}_{B_n} \varphi\, \mathrm{d}x
            & = \int_{\R^d} c  \varphi  \, \mathrm{d}x  + c\sum_{k \in \mathbb{Z}^d}  
            \int_{\frac{1}{n} k + \left [0, \frac{1}{n} \right ]^{d} } \varphi\left (\frac{1}{n} k \right ) 
            - \varphi(x) \, \mathrm{d}x\\
            \MoveEqLeft[-5]  +  \sum_{k \in \mathbb{Z}^d} \int_{\frac{1}{n} k 
            + \left [0, \frac{1}{n} \right ]^{d-1} \times \left [0, \frac{1}{n} c \right ]} 
            \varphi(x) -  \varphi\left (\frac{1}{n} k \right )\mathrm{d}x \\
            &\to \int_{\R^d} c  \varphi\, \mathrm{d}x.
        \end{aligned}
    \end{equation*}
    Using standard density arguments and the uniqueness of the weak-$\ast$ limit, 
    we deduce from the above that 
    $\mathbb{1}_{B_n} \weakly^{*}  \chi \equiv c$ in $L^\infty(\R^d)$ 
    for the whole original sequence $(\mathbb{1}_{B_n})_{n \in \mathbb{N}}$. 
    But now for all $v \in L^1(\Omega)$, it holds that
    \begin{equation*}
        \int_\Omega \mathbb{1}_{B_n \cap A}  v  \,\mathrm{d}x 
        = \int_{\R^d}\mathbb{1}_{B_n}  \mathbb{1}_{A} v  \,\mathrm{d}x 
        \to \int_\Omega c \mathbb{1}_{A} v \,\mathrm{d}x, 
    \end{equation*}
    i.e., $\mathbb{1}_{B_n \cap A} \weakly^{*}   c\mathbb{1}_{A} $ 
    in $L^\infty(\Omega)$. This gives the claim with $A_n := B_n \cap A$.

    Ad \ref{it:approxsimple}:
    According to part \ref{it:approxborelweight}, we can find
    for every $k \in \{1,\dots,K\}$ a sequence of Lebesgue measurable sets $A_{k,n}\subseteq B_k$ such that
    \begin{equation*}
        \mathbb{1}_{A_{k, n}} \weakly^{*}  c_k\mathbb{1}_{B_k}
        \text{ in } L^\infty(\Omega) \text{ as }n \to \infty. 
    \end{equation*}
    Defining $A_n := \bigcup_{k=1}^K A_{n, k}$, the claim follows immediately by superposition. 
\end{proof}

\section*{Acknowledgments}
C.~Clason was supported by the DFG under grant CL\,487/2-1, and
C.~Christof and C.~Meyer were supported by the DFG under grant ME\,3281/7-1, all within the priority programme SPP 1962 ``Non-smooth and Complementarity-based Distributed Parameter Systems: Simulation and Hierarchical Optimization''.

\printbibliography

\end{document}